\documentclass{article}
%%% HIGHLY RECOMMENDED PACKAGES AND SETTINGS
\usepackage{pdfsync}  %% if you know what this is use it or not. 
\usepackage[T1]{fontenc}
%%%%%%%%%%%%%%%%%%%%%%%%%%%%%%
%% If your tex system is less than 2 years old (in 2012) the following
%% font options are available. If not comment them out.
%\usepackage{tgtermes}
%       otherwise use alternative journal fonts
%\renewcommand{\rmdefault}{ptm} % system default Times font
%\usepackage{mathptmx}  
%%%     additional fonts
%\usepackage[scaled=.92]{helvet}
%\setoptfont{enc={T1},fam={pop}} % if You have Optima font, uncomment this line
%%% MATH
\usepackage{amsthm,amsmath,amssymb}
\usepackage{mathrsfs}
%%% BIBLIOGRAPHY
%\usepackage[numbers]{natbib}  %% numbers is required.
%%% LINKS
\usepackage[colorlinks,citecolor=blue,urlcolor=blue]{hyperref}  %%check

%\usepackage{enumerate}

%\artstatus{am} %%% leave this alone!!  That means you, too!!
%%%%%%%%theorems%%%%%

\theoremstyle{definition}

\newtheorem*{thm*}{Theorem}

\newtheorem{thm}{Theorem}
%\newreptheorem{thm}{Theorem}
\newtheorem{lem}[thm]{Lemma}
%\newreptheorem{lem}{Lemma}
\newtheorem{prop}[thm]{Proposition}
\newtheorem{cor}[thm]{Corollary}
%\newreptheorem{cor}{Corollary}
\newtheorem{defn}[thm]{Definition}
\newtheorem{Q}[thm]{Question}

\newtheorem{rmk}[thm]{Remark}

\newtheorem{CardComp}{Cardinal Comparability}
\newtheorem{ISM}{Infinite Sums are Maxima}
\newtheorem{IPM}{Infinite Products are Maxima}
\newtheorem{PermI}{Permutation Invariance}
\newtheorem{CWF}{Cardinalities are Well-Founded}
\newtheorem{mainthm}{(Informal)}
\newtheorem*{FFCT}{Fine's %First 
Characterization Theorem}

\newtheorem*{FClass}{Fine's Classification Theorem}
\newtheorem{NRCThm}{Natural Relative Categoricity Theorem}
%\newcounter{parnum}
%\newcommand{\N}{%
%   \noindent\refstepcounter{parnum}%
%    \makebox[\parindent][l]{\textbf{\arabic{parnum}.}}}
% Use a generous paragraph indent so numbers can be fit inside the
% indentation space.
%\setlength{\parindent}{2em}

\date{Version submitted July 23, 2016. \\ Please see forthcoming article in \\
	\emph{Notre Dame Journal of Formal Logic} \\ for final version}

%\newcounter{foo}

%\startlocaldefs

\newcommand{\rng}[1]{\mathrm{rng}(#1)}
\newcommand{\dom}[1]{\mathrm{dom}(#1)}
\newcommand{\otherwise}{\textrm{otherwise}}

\newcommand{\nd}{\mathbin{\wedge}}

\newcommand{\xkl}{[X]_{E(\boxminus)}}

\newcommand{\eeq}{=}
\newcommand{\leeq}{\leq}

\makeatletter
\newtheorem*{rep@theorem}{\rep@title}
\newcommand{\newreptheorem}[2]{%
\newenvironment{rep#1}[1]{%
 \def\rep@title{#2 \ref{##1}}%
 \begin{rep@theorem}}%
 {\end{rep@theorem}}}
\makeatother

\newreptheorem{prop}{Proposition}

%\renewcommand{\theFFCT}{Fine's %First 
%Characterization Theorem}
%\renewcommand{\theFSCT}{Fine's Second Characterization Theorem}
%\renewcommand{\theFClass}{Fine's Classification Theorem}

%\newenvironment{myequation}
%{\setcounter{equation}{\value{foo}}\begin{equation}}
%{\setcounter{foo}{\value{equation}}\end{equation}\ignorespacesafterend}

%\newenvironment{myeqnarray}
%{\setcounter{equation}{\value{foo}}\begin{eqnarray}}
%{\setcounter{foo}{\value{equation}}\end{eqnarray}\ignorespacesafterend}

%\newenvironment{myenumerate}
%{\begin{enumerate}\setcounter{enumi}{\value{foo}}}
%{\setcounter{foo}{\value{enumi}}\end{enumerate}\ignorespacesafterend }

%\renewcommand{\labelenumi}{(\theenumi)}

%%\numberwithin{equation}{section}
%\numberwithin{enumi}{section}
%\numberwithin{foo}{section}
%\numberwithin{thm}{section}
%\numberwithin{lem}{section}
%\numberwithin{prop}{section}
%\numberwithin{parnum}{section}
%\numberwithin{cor}{section}

%\endlocaldefs

\begin{document}

%\begin{frontmatter}

  %% TITLE OF YOUR PAPER%%%
  %% Words in title should begin with uppercase, except%%%
  %%% articles (and, the, a), conjunctions (and, for, nor, but),
  %%% prepositions (by, with, for, over, and so on)
\title{Abstraction Principles and the Classification of Second-Order Equivalence Relations}

\author{Sean C. Ebels-Duggan}%last name
    %{s-ebelsduggan@northwestern.edu}%author email, leave this [label] as is
    %\ead[label=u1,url]{http://www.philosophy.northwestern.edu/people/continuing-faculty/ebels-duggan-sean.html}%%web page, leave this [label] as is}}
  %%% ADDRESS NOTES
  %%% Dept listed first, University/company second, street or PO box
  %%% third%%%
  %%% U.S. Postal Service guidelines request no punctuation in the
  %%% street...country lines%%%
  %%% Country should be all uppercase%%%%
%  \address{Department of Philosophy\\
 %   Northwestern University\\
  %  1880 Campus Drive\\
   % Evanston IL 60208\\
    %UNITED STATES OF AMERICA\\
    %\printead{e1}\\
    %\printead{u1} }
%\runauthor{S.~Ebels-Duggan}
\maketitle

\begin{abstract}
This paper improves two existing theorems  of interest to neo-logicist philosophers of mathematics.  The first is a classification theorem due to  Fine %~\cite{Fine2002} 
for equivalence relations between concepts definable in a well-behaved second-order logic.  The improved theorem states that if an equivalence relation $E$ is defined without non-logical vocabulary, then the \emph{bicardinal slice} of any equivalence class---those equinumerous elements of the equivalence class  with equinumerous complements---can have one of only three profiles.  The improvements to Fine's theorem allow for an analysis of the well-behaved models had by an abstraction principle, and this in turn leads to an improvement of Walsh and Ebels-Duggan's relative categoricity theorem. %~\cite{WalshED2015}.
\end{abstract}

%\begin{keyword}[class=AMS]
%  \kwd[Primary ]{03A05} \kwd{03B15} \kwd{00A30}
%\end{keyword}

%\end{frontmatter}

\section{Introduction}\label{sec:intro}

Neo-logicist philosophers of mathematics are impressed by the fact that  some \emph{abstraction principles} can interpret interesting fragments of mathematics.
These principles are sentences of an enriched second-order language of the following form:  for $E$ an equivalence relation between second-order objects, and $\partial$ a functor taking second-order objects to first order objects, the abstraction principle $A_E[\partial]$ is the sentence
\begin{equation} \label{e:abstprin}
(\forall X, Y)(\partial X = \partial Y \leftrightarrow E(X,Y))\end{equation}
In virtue of their form, say neo-logicists, abstraction principles are eligible to be ``analytic truths''---meaning that they are epistemically near enough to logical truths.\footnote{See most notably~\cite{Wright1983} and~\cite{Hale2001}.}  And if abstraction principles count, in some sense, as near enough to logical, then so should any mathematics they interpret.  

So goes the argument, and not without objections and replies.  Much turns on what would count as a ``(near enough to) logical'' abstraction principle.  But a plausible minimal requirement is this:  an abstraction principle's equivalence relation must itself be ``logical'',\footnote{We put aside concerns, like those of Quine in~\cite{Quine1970aa}, that second-order languages are themselves not ``logical'' in virtue of their quantifying over higher-order objects.}   and so cannot require identification of particular objects, concepts, or relations for its definition.  The thinking is that logic is indifferent to particulars, whether objects, concepts (monadic second-order objects), or relations (polyadic second-order objects). So for an abstraction principle to be logical, it is necessary that its equivalence relation be accordingly indifferent.  A natural way to flesh out this notion of ``indifference'' is with the notion of \emph{permutation invariance}:  the status of the relation doesn't vary, no matter how one exchanges the objects of its concern.\footnote{This view is associated with Tarski's identification of logical notions as those that are permutation invariant in this sense (see~\cite{Tarski1986aa}), for a more recent defense see the work of Gila Sher (see~\cite{Sher1991}).  Other relevant discussions can be found in~\cite{Bonnay2008aa} and~\cite{McGee1996}.  The so-called Tarski-Sher thesis identifying logicality with permutation invariance is controversial; that permutation invariance is a necessary feature of logical notions is not (see~\cite{sep-logical-constants}).}

Further complicating the neo-logicist's hopes is the fact that not all abstraction principles---even those based on permutation invariant equivalence relations---\emph{should} count as logical:  so not only does said neo-logicist need an account of how an abstraction principle could be logical, but that account must also sort the good principles from the bad.  
So more needs to be said for what kinds of logical equivalence relations there are on concepts, and which are apt to   yield %acceptable abstraction principles---
suitably logical abstraction principles.  
It is thus of interest to classify logical equivalence relations on concepts.

This paper proves a classification theorem for such equivalence relations.  Though discovered independently, it happens that the classification theorem here presented is a stronger version one already on the books.  Kit Fine's Theorem~4 of~\cite[p.~142]{Fine2002} classifies infinite concepts in standard models.  The theorem is then used to determine the finest abstraction principle satisfiable on all infinite standard models~\cite[Theorem~6, p.~144]{Fine2002}.  

But Fine's theorem in its given form hides its true power;  hence our new presentation.
In our version the result is put in a deductive setting in which cardinalities are well-behaved.  This is relatively minor, but it allows us to apply the theorem to problems cast in just such deductive settings (about which more in a moment).  More importantly, our version of the theorem sorts equivalence relations, and their abstraction principles, more usefully.  In other words:  there are abstraction principles well-discussed in the literature on neologicism, but as Fine states the theorem in~\cite{Fine2002}, one needs to squint to see how their equivalence relations are classified.  The  version here given allows for more clear-eyed recognition of this sorting, and allows for a generalization of the classification to (Dedekind) finite concepts as well.  

It is something of a journey from this restatement and expansion of Fine's theorem to its payoff, but the midpoint is a worthy stop.  The improved theorem enables an analysis of when a given abstraction principle has a well-behaved model (not just a standard model).  The existence of such models, as we shall see, are determined by the classification of the principle's equivalence relation on a certain sub-class of concepts---the \emph{bicardinally equivalent} ones.  This at least gives the neo-logicist a tool for analyzing the space of abstraction principles.  

The terminal payoff comes in the form of a more direct result:  the restated and expanded  theorem allows an improvement of the relative categoricity theorem given by Walsh and Ebels-Duggan in~\cite{WalshED2015}.  Walsh and Ebels-Duggan showed that abstraction principles are \emph{naturally relatively categorical} when and only when their equivalence relation is coarser than that of the neo-logicist's favored abstraction principle, {\tt HP}.  Using the improved version of Fine's theorem, we show that this result obtains even when we remove the qualifier ``naturally''.  This improvement allows for a plausible case to be made that {\tt HP} and its ilk pass a minimal threshold for logicality, since (unqualified) relative categoricity is arguably the correct notion of permutation invariance for abstraction principles.

The remainder of this paper is organized as follows.  Sections~\ref{sec:moreintro}--\ref{ssec:bceq} are introductory, motivating the project and setting in place the formal machinery for the theorems.  
Section~\ref{sec:moreintro} explains why Fine's theorem could use improvement, and gives an informal characterization of the improved theorem, which we call the ``\ref{main}''.
Section~\ref{ssec:background} describes the second-order language in which we will work, and its structures; as well as listing our use of standard abbreviations and  giving formal clarifications of the terms we have loosely defined above and in section~\ref{sec:moreintro}.  Here also we describe the cardinality assumptions we adopt with slight modification from~\cite{WalshED2015}, explaining the sense in which our background logic admits only  ``well-behaved cardinalities''.
The next section~\ref{ssec:bceq} 
explains why restricting our attention to bicardinally equivalent concepts is apt to our purposes.  

With preliminaries done, 
section~\ref{sec:mainthm} provides a rigorous statement and proof of the \ref{main}.  
The remaining sections discuss the relevance of the \ref{main} to the neo-logicist project.  Our aim in these sections will not be to argue for or against a particular version of neo-logicism, but instead to let the \ref{main} shed light on some technical questions of interest.
Section~\ref{sec:badco} will address the sorting of so-called ``bad companions'' by the \ref{main}; 
while section~\ref{sec:relcat} proves and discusses the extension of the relative categoricity theorem of~\cite{WalshED2015}.  For ease of exposition, we put off some of the more tedious or repetitive proofs to appendices, we conclude with these.

\section{Why bother a sleeping theorem?}\label{sec:moreintro}

The restatement and expansion of Fine's theorem is more than just a curious exercise, for we believe there is a \emph{better} statement of the theorem than the original.  We begin by giving Fine's version.

To state Fine's theorem efficiently, we need a bit of notation which we will use in the rest of the paper.  Letting ``$\cup$'', ``$\cap$'', and ``$-$'' denote the usual boolean functions on sets of union, intersection, and difference, we will use ``$X\triangle Y$'' to mean the \label{page:notation} \emph{symmetric difference} between $X$ and $Y$, namely the set $(X-Y) \cup (Y-X)$.  And we'll denote the complement of $X$ in a universe $M$ by $M-X$.  Finally, we'll say that $X$ and $Y$ are \emph{bicardinally equivalent} to mean that $|X|=|Y|$ and $|M-X|=|M-Y|$;  that is, that $X$ and $Y$ are equinumerous (in a given structure $\mathcal M$) and that their complements are also equinumerous (also in the given structure).  We'll abbreviate bicardinal equivalence by writing ``$X \boxminus Y$''.  Note that $\boxminus$ is (provably) an equivalence relation in second-order logic.

Now, to  Fine's theorem:  Let $\mathcal M$ be 
%\marginpar{deleted extra ``standard''} 
a standard model of second-order logic, $X$ and $Y$ be infinite concepts such that $X\boxminus Y$, and $E$ be a permutation invariant equivalence relation. The concepts $X$ and $Y$ are \emph{representative} just if 
\begin{equation}\label{e:Fine-rep} \mathcal M\models (\forall Z,W)(Z, W \boxminus X \nd E(X,Y)\rightarrow E(Z,W)) \end{equation}

\begin{FClass}\label{fclass}
Given a standard model $\mathcal M$, infinite concepts $X$ and $Y$ of $\mathcal M$ are representative if and only if they meet exactly one of the following conditions.
\begin{enumerate}
\item $|X\triangle Y|= |X|<|M|$.   \label{i:fclassi} In Fine's words, $X$ and $Y$ are ``small but very different''.
\item   $\omega \leq |M-X|,|M-Y|<|X|=|M|$ and $|(M-X)\triangle (M-Y)|=|(M-X)\cup (M-Y)|$.  
%\marginpar{Note margin overflow}
  %This second conjunct is equivalent to $|X\triangle  Y|=|M-(X\cap Y)|$ in the presence of the first, since $(M-X)-(M-Y) = Y-X$ and $(M-Y)-(M-X)= X-Y$. 
\label{i:fclassii}  
In Fine's words this is that $X$ and $Y$ ``are almost universal but with infinite very different complements.''
\item $|X|=|M-X|=|M|=|M-Y|=|Y|$, and $|X\triangle Y| = |X\triangle (M-Y)| = |X|$ or vice versa.  In  Fine's words, \label{i:fclassiii} $X$ and $Y$ ``are bifurcatory with one very different from the other and from its complement.''
\end{enumerate}
\end{FClass}
This statement of Fine's theorem is somewhat dizzying;  Fine's statement is more succinct since he uses definitions we have spelled out.  But the complexity of the theorem is not by itself a count against it.  The reason it can be improved, however, is that its classes do not obviously organize the variations of equivalence relations at play in the investigation of abstraction principles.  
The above statement of Fine's theorem also obscures a generalization of the theorem:  in fact there is a general version of Fine's theorem that applies to finite, as well as infinite, concepts.  It is hard to see how this could be given the classification Fine offers.

Restating Fine's theorem will make it easier to use, and highlights the connections between the resultant classification and abstraction principles of particular interest to neo-logicists.  
(Unfortunately, restating and expanding the theorem to the finite case requires proving it anew.  One would hope to rely on Fine's proof, but in fact once the work is done for the extension, the initial theorem is all but proved.)

To improve on Fine's classification theorem, we start by thinking about three types of abstraction principles and their similarities.

We noted in the Introduction~\ref{sec:intro} that Frege's logicist project was to show that arithmetic is in some sense ``really'' logic by showing that arithmetic laws are in fact logical laws.  Frege executes his program in two steps:  first by proving in second-order logic (without our cardinal assumptions) that {\tt BLV},
\begin{equation}\label{e:BLVdef} (\forall X,Y)(\varepsilon X = \varepsilon Y \leftrightarrow X = Y ) \end{equation}
implies {\tt HP}:
\begin{equation}\label{e:HP} (\forall X,Y)(\# X = \# Y \leftrightarrow X \approx Y) \end{equation}
Here ``$=$'' indicates co-extensiveness of concepts, and ``$\approx$'' indicates equinumerousity, the existence of a bijection between concepts.  Note that in the notation of (\ref{e:abstprin}), {\tt BLV} is $A_=$ and {\tt HP} is $A_\approx$.

Frege then shows %in a result now known as Frege's Theorem, 
that in second-order logic (with full comprehension), {\tt HP} interprets (what is now called) second-order Peano Arithmetic.  In fact, relatives of {\tt HP} will do the same thing,\footnote{Interpreting Peano Arithmetic does not require the full power of {\tt HP}, but only that an abstraction principle match {\tt HP} on all finite concepts.  See~\cite{Heck1997ab}, and~\cite{MacBride2000aa} for further discussion.} including the \emph{bicardinality principle}, {\tt BP}, which is $A_\boxminus$.  So if {\tt BLV} is logical, then so  are {\tt HP} and {\tt BP}, and thus so is arithmetic.  

But {\tt BLV} is no principle of logic, since it is quite famously inconsistent in second-order logic, deriving the Russell paradox.  Neo-logicism proceeds against Frege's objections in~\cite[\S 63ff]{Frege1980}, asserting arithmetic is logical from the second part of the program alone---that is, that as {\tt HP} is plausibly logical, so then is arithmetic.\footnote{Wright's~\cite{Wright1983} marks the staring point of neo-logicism, though the observation that Frege's program proceeds in two parts was made in~\cite[p.~183, p.~194]{Parsons1965}.}

At a minimum, abstraction principles must be consistent to count as logical.  But that this minimum 
%\marginpar{added ``this minimum''} 
is not enough is the ``bad company'' problem:  there are consistent abstraction principles that are otherwise unacceptable, at least to the neo-logicist.  
The prototypical bad companion is {\tt NP}:\footnote{ %See also the footnote on page~\pageref{page:conserve}.  
The first bad companions appeared in~\cite{Boolos1990};  one of these was simplified into {\tt NP} by Wright in~\cite{Wright1997aa}.}  
\begin{equation}\label{e:NP} (\forall X, Y)(\eta X = \eta Y \leftrightarrow |X\triangle Y| < \omega) \end{equation}
The principle {\tt NP} is unwelcome to the neo-logicist because in contexts of well-behaved cardinalities, it implies that the universe is Dedekind finite.\footnote{And this is unwelcome to neo-logicists because they claim an abstraction principle shouldn't imply anything about sortal concepts unrelated to the abstracts;  see~\cite[p.~415]{Hale2000aa}, \cite[pp.~295--7]{Wright1997aa} and~\cite[pp.~314--5]{Wright1999}, as well as \cite{ED-2015}.
    %\marginpar{Added reference to \cite{ED-2015}}
    }  

Another bad companion, discussed in~\cite[\S~5.5]{WalshED2015}, is the Complementation Principle, {\tt CP}, 
\begin{multline*}
 (\forall X, Y)( \copyright X =  \copyright Y \leftrightarrow {} \\
 [ (|X|=|Y|=|M-X| \wedge (X=Y \vee X = M-Y) ) \vee {} \\ (|X|\neq |M-X| \wedge |Y|\neq |M=Y|)] ) 
\end{multline*}
This principle sorts only concepts the same size as their complements.  Such concepts are grouped together with only their complements, and all other concepts are grouped in a ``junk'' equivalence class.  As we will show below, this principle counts as ``bad company'' as well (see section~\ref{ssec:LCP}).

Taking stock, we have so far two good abstraction principles in {\tt HP} and {\tt BP}, and three bad ones ({\tt BLV}, {\tt NP}, and {\tt CP}).  Can we sort these into useful classes?  
We can if, like Fine, we look at their behavior just on bicardinally equivalent concepts.    Clearly, when restricted to just bicardinally equivalent concepts, the equivalence relations of {\tt HP} (equinumerousity) and {\tt BP} are \emph{trivial}:  they include all such concepts.

Second, {\tt BLV} and {\tt NP} are, in a sense, of the same type:  the equivalence relations for both are concerned with the size of the symmetric difference between the two related concepts.  The principle {\tt BLV} discriminates concepts if their symmetric difference has non-zero size.  Likewise {\tt NP} discriminates if the symmetric difference is Dedekind infinite.  Say such equivalence relations that sort concepts according to the size of their symmetric difference, are \emph{separations}:  two concepts with ``few'' objects falling under  one but not both of the two concepts are equivalent;  two concepts with ``many'' such objects are not equivalent.  

Lastly, we have {\tt CP}.  Such an equivalence relation is neither trivial, nor a separation, but it has many of the drawbacks of separations.  And such an equivalence relation can be generalized, as {\tt NP} is a generalization of {\tt BLV}:   two bicardinally equivalent concepts can be grouped together if their symmetric difference is ``small'', or the complement of their symmetric difference is ``small''.  
So let us say that on bicardinally equivalent concepts, such relations are \emph{complementations}:  they sort concepts by the size of their symmetric difference, or the size of the complements of their symmetric difference.

Finally, say that an equivalence relation $E$ is a \emph{refinement} of an equivalence relation $E'$ if whenever two concepts are $E$-equivalent, they are $E'$ equivalent.\footnote{See~\cite{Fine2002} and~\cite{Antonelli2010aa} for discussion of ``finer'' and ``coarser'' equivalence relations and their relevance to neo-logicism.  Cook's \cite{Cook2016doi} standardizes the terminology and indicates relations between different kinds of invariance. It was Cook's paper that made me see the connection to Fine's work.  %\marginpar{Added Cook citation and ff.}
    }

We can now state the informal (though somewhat inexact) version of our main theorem:

\begin{mainthm}\label{main}
Let $E$ be a purely logical equivalence relation, and let our background logic be a strong but natural version of second-order logic.  If we look only at $E$ on bicardinally equivalent concepts, then $E$ is either the trivial equivalence relation, or it is a refinement of a 
separation, or it is a refinement of a 
complementation.  
\end{mainthm}

This version of Fine's result makes more obvious the relationship between its classification scheme and the kinds of abstraction principles that have been seen in the neo-logicist's laboratory.  
In restating the theorem we observe a striking alignment:  the problematic abstraction principles arise from \emph{non-trivial} equivalence relations.  This is no accident;  as we will see in section~\ref{sec:badco}, abstraction principles involving non-trivial equivalence relations limit the size of their models in just the way that is typical of bad company.

\section{The language $L_0$, comprehension, and cardinality assumptions} \label{ssec:background}

Our background logic is a basic, though robust, version of second-order logic, adopted with slight weakening from \S 2 of~\cite{WalshED2015}.  In short, the language $L_0$ uses lower-case letters to vary over first-order objects, and upper-case letters to vary over second-order objects of all finite arities (arity will be clear from context in our presentation)---such objects of singular arity are called ``concepts'' or ``sets'', others are called ``relations''.
%\marginpar{added terminological remark}  
All terms of $L_0$ are variables;  we exclude constants of either type.  Formulae of $L_0$ and their interpretation are as usual.  

Thus models of this language are of the form 
\begin{equation}
\mathcal M = ( M, S_1[M], S_2[M], \ldots )
\end{equation}
where $M$ is non-empty and the members of $S_i[M]$ are subsets of $M^i$ for $i\geq 1$.   We do not require $S_i[M]$ to be the full power-set of $M$, 
for compatibility of our results with those of~\cite{WalshED2015};  that is, we work in the non-standard semantics. 
%\marginpar{Fixed sentence}   
%To recap:  in general we will work within a given model $\mathcal M$, to show that our results obtain for all models.  Since we are allowing $S_i[M]$ to be non-standard, we can make use of the completeness theorem for second-order logic to derive deductive analogues for the results enumerated below.
Likewise we require our models to satisfy %But we restrict our attention to those models satisfying 
comprehension axioms for all formulae
%\marginpar{``formuae'' to ``formulae''}
 in their signature; %so the logic is impredicative.  
these %comprehension 
axioms are of the form
\begin{equation}
(\exists X)(\forall y)(Xy\leftrightarrow \Phi(y, \bar p, \bar R))
\end{equation}
where $\bar p$ is a sequence of parameters from $M$, and $\bar R$ is a sequence of relation parameters from $\bigcup_{i\in \mathbb{N}}S_i[M]$. This ensures that all finite concepts, and many more besides, are included in $S_1[M]$.  The remaining axioms of our logic, which we will call ``second-order logic'' are those of {\tt D2} found in~\cite[pp.~65--7]{Shapiro1991}, but excluding the axiom of choice.  
We will soon be augmenting this logic with choice principles to ensure well-behaved cardinalities. .

Most symbols used are standard;  we rehearse a few:  The symbol $\sqcup$, which appears in Proposition~\ref{p:nonA}, means the disjoint union;  we will at times abuse notation using it to mean the union of two disjoint sets.
%\marginpar{Added first sentence of para.}
As usual we use ``$\preceq$'' to assert the existence of an injection, and ``$\approx$'' the existence of a bijection.  We will also use expressions like ``$|X|\leq |Y|$'' and ``$|X|=|Y|$'', and we will use the convention of writing, e.g., $X\stackrel{f}\preceq Y$ to mean that $f$ is an injection from $X$ into $Y$; likewise with the other expressions.  As above  the expression ``$\boxminus$''  indicates the relation of \emph{bicardinality};
the expression ``$X\stackrel{f}{\boxminus} Y$'' means that $f$ is a bijection from $M$ to itself and $f(X)=Y$ and $f(M- X) = M- Y$.  Both $\approx$ and $\boxminus$ are provably equivalence relations in second-order logic;  we will say in the obvious circumstances that concepts are cardinally, or bicardinally, equivalent.

We identify concepts as infinite \emph{in $\mathcal M$} if they are Dedekind infinite, that is, that there is in $\mathcal M$ an injection from the concept into a proper subconcept of itself, and we write this with the expression $|X|\geq \omega$.  We say that $X$ is finite in $\mathcal M$ if it is not Dedekind infinite, writing it $|X|<\omega$. %\footnote{The aptness of these abbreviations will be apparent shortly. % in Section~\ref{ssec:cardwb}.}}  
These expressions are used exclusively within the model $\mathcal M$.  When we wish to say that a set $X$ is finite or infinite in the metatheory, we will either say so explicitly, or use ``$|X|=n$ for some $n\in \mathbb{N}$'' or ``$|X|<|\mathbb{N}|$''.  

%We use the usual ``curly bracket'' notation for sets to indicate definable concepts; ``$X=Y$'' to mean that the concepts $X$ and $Y$ are co-extensive, and the boolean connectives and ``$\triangle$'' as noted on page~\pageref{page:notation}ff.  
We will abuse notation and use ``$M$'' for both the first order domain of $\mathcal M$ and the universal concept $\{x \mid x = x\}$; %, except where this may be ambiguous;  
%we also use  ``$\emptyset$'' for the null concept $\{x \mid x \neq x\}$.  We identify functions with their graphs, but use functional notation with $e,f,g,h, s$ and $\pi, \Gamma, \Delta$ to denote functions. % as the meaning of such abbreviations is usually clear.  
and we'll use functional notation for relations that are functional in $\mathcal M$. %; unless specified all functions are functions \emph{in $\mathcal M$}. 
In general we will write $f\in \mathcal M$ to mean that for some $i\in \mathbb{N}$, 
%\marginpar{added ``for some $i\in \mathbb{N}$}
$f$ is an $i$-ary function and $f\in S_{i+1}[M]$; similarly with concepts and relations.  %Where $f$ is a function and $X$ a subset of its domain, 
We write $f(X)$ to mean the image of $X$ under $f$;  such a relation is always in $\mathcal M$ by comprehension.

Though $L_0$ is expressively rich, it is not too rich, for (crucially for our results), any function $\pi:M\rightarrow M$ that extends to  permute objects \emph{of all types in $\mathcal M$} is an automorphism of $\mathcal M$.  This can be seen by the fact that, for a given $\mathcal M$ and $\pi:M\rightarrow M'$, the ``push model of $\mathcal M$ under $\pi$'' is always isomorphic to $\mathcal M$, with $\pi$ the witnessing isomorphism.\footnote{See \cite[pp.~225--31]{Button2013aa} for more on this method.} % 
If $\pi$ is also such as to make $\mathcal M$ the same structure as its ``push model'', then such a $\pi$ is an automorphism.  By comprehnsion, if $\pi:M\rightarrow M$ is a bijection in $\mathcal M$, then $\pi$ meets the needed criteria.  Thus, we may state the relevant result in the following form for our use:

\begin{PermI}\label{p:perminv}  If $\pi:M\rightarrow M\in \mathcal M$ is bijection and 
$\Phi(\overline x, \overline X)$ is an $L_0$-formula, then 
\begin{equation} \label{e:perminv} \mathcal{M} \models (\forall \overline x, \overline X)(\Phi(\overline x, \overline X) \leftrightarrow \Phi(\overline{\pi(x)}, \overline{\pi(X)})) \end{equation}

Under the same hypotheses about $\pi$, if 
$E(X,Y)$ is an $L_0$-definable  equivalence relation on concepts of $M$, then 
\begin{equation}\label{e:perminvpi} \mathcal{M} \models E(X,Y) \leftrightarrow E( \pi (X), {\pi} (Y)) \end{equation}
%(or equivalently, then $[X]=[Y] \Leftrightarrow [ \pi(X)] = [{\pi}(Y)]$).
\end{PermI}
Here, of course, $E$ being \emph{$L_0$-definable} means definable \emph{without} parameters:  the formula defining $E$ has only the second-order variables $X$ and $Y$ free.  Definability via an $L_0$-formula is the technical correlate of the informal ``purely logical'' used in Section~\ref{sec:intro}.   
Our main results depend on \ref{p:perminv}, in the sense that they obtain for any equivalence relation that is permutation invariant, %across models, 
not just those $L_0$-definable.

The second way our background logic is robust is that we require that the relation of equinumerousity behave as it does in more familiar contexts, e.g., {\tt ZFC}.  In particular, we require the following  
to obtain in all models $\mathcal M$  under consideration:

\begin{CardComp}\label{cc}
 % For any $L_0$-structure $\mathcal M$, 
\begin{equation}\label{e:cardcomp}
\mathcal M\models (\forall X,%(\forall 
Y)(X\leq Y \vee Y\leq X)
\end{equation}
\end{CardComp}

\begin{ISM}\label{ism}
% For any $L_0$-structure $\mathcal M$, 
\begin{equation}\label{e:ism}
\mathcal M\models (\forall X, Y)(|X|,|Y|\geq \omega \rightarrow |X \sqcup Y| = \max(|X|, |Y|)
\end{equation}
\end{ISM} 

\begin{IPM}\label{ipm}
%(\ref{ipm}) %For any $L_0$-structure $\mathcal M$, 
\begin{equation}\label{e:ipm}
\mathcal M\models (\forall X, Y)(|X|,|Y|\geq \omega \rightarrow |X \times Y| = \max(|X|, |Y|)
\end{equation}
\end{IPM}

\begin{CWF}\label{cwf}
%(\ref{cwf})  
For any %$L_0$-structure $\mathcal M$ and any 
$L_0$-formula $\Phi(X)$, 
\begin{equation}
\mathcal M \models (\exists X)(\Phi(X)) \rightarrow (\exists X)(\forall Y)(\Phi(Y) \rightarrow X \preceq Y)
\end{equation}
\end{CWF}
We take over these principles  
%\marginpar{moved ``over''}
nearly directly from~\cite{WalshED2015};  
%\marginpar{deleted ``we adopt them''}
both for their handiness and as we will apply the~\ref{main} to offer a solution to some open questions from that paper (see section~\ref{sec:relcat}).\footnote{In~\cite{WalshED2015} the principles \ref{cc}, \ref{ism}, and \ref{ipm}  were deployed as consequences of the principle {\tt GC}, though {\tt GC}'s well-ordering was in the main results only to show that the restrictions to small concepts and to abstracts were required (see~\cite[equations 4.22--4.23 and following]{WalshED2015}).  In this paper we will use only the cardinal consequences of {\tt GC} listed above.  Our logic is also weaker than that deployed in \cite{WalshED2015} in that we make no use of the principle {\tt AC}.
}  
Thus, in what follows and unless otherwise noted, all structures satisfy full comprehension and these cardinality assumptions.

The principles \ref{cc} and \ref{ism} are ubiquitous in the proof of our first version of the \ref{main}, Theorem~\ref{thm:main-0formal}.  The schematic principle \ref{cwf} is used in moving from Theorem~\ref{thm:main-0formal} to the second  formal version of the \ref{main}, Theorem~\ref{thm:main-formal}.  The principle \ref{ipm} is deployed mainly in the form of a pairing function in section~\ref{sec:relcat}.

We will need specific notation suited to dealing with the divergence, in non-standard semantics, between Dedekind finitude and finitude in the metatheory.  As this notation pertains only to the proof of the formal version of the \ref{main}, we introduce it in  section~\ref{sec:mainthm}.

\section{Bicardinal equivalence and permutation invariance}\label{ssec:bceq}

Our \ref{main} says that an $L_0$-definable equivalence relation, when restricted to bicardinally equivalent concepts, can have one of only three profiles.  But why is it interesting to look at \emph{this} particular restriction of such equivalence relations?  This portion of our paper will motivate this restriction.  

Our interest in $L_0$-definable equivalence relations is that, provided that second-order languages are %interesting as 
a part of logic, these relations meet at least one criterion of logicality:  they are permutation invariant.  
But one interesting fact about second-order logic is that  there are $L_0$-definable properties that can \emph{distinguish} second-order objects.  To see what this means, consider the formula 
\begin{align}
Singleton(X):{} & (\exists x)(\forall y)(Xy \leftrightarrow y=x) \label{e:card1} 
\end{align}
The formula $Singleton(X)$ can distinguish between second-order objects in sufficiently rich structures for second-order languages: if a structure includes distinct first-order objects $a$ and $b$, then in that structure $Singleton(\{a\})$ while $\neg Singleton(\{a,b\})$. These second-order objects are thus distinguished by the formula ``$Singleton(X)$''.
Thus, while first-order objects are indistinguishable using only $L_0$-definable notions, second-order objects are not.  
It makes sense, then, that we would want to pay special attention to collections of second-order objects that are \emph{not} distinguishable in this way.  And we need not look far for concepts that are so indistinguishable.  As can be see from \ref{p:perminv},  concepts are bicardinally equivalent if and only if they cannot be distinguished using an $L_0$-formula.  %This is easily seen from \ref{p:perminv}:  if $X\boxminus Y$, then there is a permutation $\pi$ with $X \stackrel{\pi}{\boxminus} Y$, and so by \ref{p:perminv} $\Phi(X)$ holds if and only if $\Phi(\pi(X))$, which just is to say that $\Phi(Y)$ holds.  
It is thus of interest to see how purely defined equivalence relations behave on these concepts in particular.

Relatedly, bicardinal equivalence is
\emph{indicative} of permutations.
Let $E$ be an equivalence relation on concepts.  We will say that $E$ is an \emph{indicator of permutations} just if, for any second-order structure $\mathcal M$,\footnote{Not just those with well-behaved cardinalities.} 
%\marginpar{added footnote} 
and any function $f:M\rightarrow M$ such that if $X\in \mathcal M$ then $f(X)\in \mathcal M$, if 
\begin{equation}\label{e:permind}(\forall X,Y)( E(f(X), f(Y)) \rightarrow E(X,Y) ) \end{equation}
in that structure, then $f$ is a permutation (it is a bijection from the first-order domain $M$ to itself).  
\begin{thm}\label{p:permbicard}
Bicardinality is an indicator of permutations.\end{thm}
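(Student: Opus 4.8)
The plan is to prove the contrapositive: assuming $f\colon M\to M$ is not a bijection, I will exhibit concepts $X,Y$ of $\mathcal M$ with $f(X)\boxminus f(Y)$ but $\neg(X\boxminus Y)$, so that the displayed implication fails and hence the hypothesis of the definition cannot hold. Since ``indicator of permutations'' quantifies over \emph{all} second-order structures, I must be careful to argue in a completely general $\mathcal M$; in particular I cannot invoke \ref{cc} or \ref{ism}, so every witnessing bijection will have to be produced explicitly from $f$ together with comprehension. I split into two cases according to how bijectivity can fail: either $f$ is not injective, or $f$ is injective but not surjective.

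The non-injective case is immediate to dispatch. Suppose $f(a)=f(b)$ with $a\neq b$. The sets $X=\{a,b\}$ and $Y=\{a\}$ are concepts by comprehension, and since $f(a)=f(b)$ we have $f(X)=\{f(a)\}=f(Y)$; hence $f(X)\boxminus f(Y)$ by reflexivity of $\boxminus$. On the other hand $\neg(X\boxminus Y)$, because a two-element concept is not equinumerous with a one-element concept---finite pigeonhole, which is provable in the background logic with no cardinality assumption whatsoever. This contradicts the hypothesis, so $f$ must already be injective.

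The main obstacle is the surjective case: suppose $f$ is injective but some $c\in M$ lies outside $\mathrm{rng}(f)$. Because every image satisfies $f(X)\subseteq \mathrm{rng}(f)$, we have $c\in M-f(X)$ for \emph{every} concept $X$, so the complements of images always carry this extra slack. The idea is to play that slack off against the two-sided character of $\boxminus$: choose $X$ and $Y$ (for instance $X=M$ and a copy of $M$ with one point deleted for $Y$) so that a genuine discrepancy between $M-X$ and $M-Y$ survives---giving $\neg(X\boxminus Y)$---yet is reabsorbed once we pass to $M-f(X)$ and $M-f(Y)$ by rerouting through the gap left by $c$, giving $f(X)\boxminus f(Y)$. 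Here injectivity is used twice: first to guarantee $|f(X)|=|X|$ and $|f(Y)|=|Y|$, so that the ``$X$-side'' of $\boxminus$ is controlled; and second to identify $M-f(X)$ with the disjoint union of $f(M-X)$ and the gap $M-\mathrm{rng}(f)$, which is exactly what should let the deleted point be swallowed. Making this absorption precise is the delicate step, and where I expect essentially all the work to lie: I may not appeal to cardinal arithmetic and must instead assemble the bijection between $M-f(X)$ and $M-f(Y)$ by hand from $f$, its inverse on $\mathrm{rng}(f)$, and the point $c$, while simultaneously checking that the chosen discrepancy truly persists on the preimage side. It is precisely this explicit-bijection bookkeeping, carried out uniformly across arbitrary structures, that I anticipate being the crux of the argument.
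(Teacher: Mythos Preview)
Your handling of the non-injective case is correct and matches the paper's argument (your choice $Y=\{a\}$ is in fact slightly tidier than the paper's $Y=\{f(a)\}$).

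The gap is in the injective-but-not-surjective case. Your proposed witnesses $X=M$ and $Y=M\setminus\{a\}$ do not in general give $f(X)\boxminus f(Y)$. Take the standard model on $M=\mathbb N$ with $f(n)=n+1$: then $M-f(X)=\{0\}$ has one element while $M-f(Y)=\{0,f(a)\}$ has two, so $f(X)\not\boxminus f(Y)$ and the pair tells you nothing. The obstruction is structural, not a matter of choosing cleverer witnesses: in this same example $M-f(Z)=\{0\}\sqcup f(M-Z)$ for every $Z$, and since adjoining a single fixed point to two subsets of $\mathbb N$ both preserves and reflects equinumerosity, one gets $f(X)\boxminus f(Y)\Leftrightarrow X\boxminus Y$ outright. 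Thus \emph{no} pair witnesses a failure of preservation for this particular $f$, and the contrapositive strategy of exhibiting a single bad $(X,Y)$ cannot be made to work here, no matter how carefully you assemble bijections by hand.

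The paper's argument for surjectivity is organised quite differently. Rather than attempting to refute the preservation hypothesis, it \emph{assumes} it and uses it as a deduction rule. First it proves, from injectivity and Schr\"oder--Bernstein alone, the auxiliary fact that $X-f(X)\boxminus f(X-f(X))$ whenever $|X|=|M|$; then it chains together instances of this fact, complementation, and the assumed implication $f(\cdot)\boxminus f(\cdot)\Rightarrow(\cdot)\boxminus(\cdot)$ to conclude $\emptyset\boxminus f(M-f(M))$, whence $M-f(M)=\emptyset$. The essential idea your outline is missing is that the hypothesis gets applied to concepts \emph{manufactured from $f$ itself} (iterated images and their complements), not to a single hand-picked pair; it is precisely this iterative deployment of the preservation assumption that your plan does not contain.
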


\begin{proof}
Observe that if $f$ is not injective, then (\ref{e:permind}) fails:   let $x, y$ be such that $f(x)=f(y)$, and note that $\neg(\{x,y\} \boxminus \{f(x)\})$  $f(\{x,y\}) = \{f(x)\} \boxminus \{ff(x)\}$.  So it remains only to show that if $f$ is injective and (\ref{e:permind}) holds in $\mathcal M$, then $f$ is surjective.

Further, if $|M|$ is finite in $\mathcal M$, then every injection on $M$ is a permutation.  So we may further assume that $M$ is not finite in $\mathcal M$.

Working in $\mathcal M$, we show first that
\begin{equation}
\label{e:permbic-cond}
\textrm{if}\; |X|=|M| \; \textrm{then} \; X-f(X) \boxminus f(X-f(X))
\end{equation}
That the concepts are equinumerous follows from the fact the $f$ is injective.  To see that their complements are equinumerous, we have that:
\begin{align}
|M| &\geq |M-(X-f(X))| 						\\
	& = |(M-X)\cup f(X)| \geq |f(X)| = |M|	
\end{align}
where the last equality follows again by the injectivity of $f$.  By the Schr\"oder-Bernstein theorem, $|M-(X-f(X))|=|M|$.\footnote{Note this is provable in unaugmented second-order logic, see~\cite[Theorem 5.2, p.~102ff]{Shapiro1991}).}  An identical argument shows that $|M-f(X-f(X))|=|M|$.

Thus by (\ref{e:permbic-cond}) we have 
\begin{equation}
M-f(M) \boxminus f(M-f(M))
\end{equation}
Since $\boxminus$ is preserved under complementation, 
\begin{equation}
\label{e:permbic-compl}  f(M) \boxminus M-f(M-f(M))
\end{equation} 
which implies that 
\begin{equation}
\label{e:permbic-card}
|M|=|f(M)|=|M-f(M-f(M))|
\end{equation}
So by (\ref{e:permbic-cond}) together with  (\ref{e:permbic-compl}) and (\ref{e:permbic-card}) we obtain that 
\begin{equation}\label{e:picbic-l}
f(M) \boxminus M-f(M-f(M)) \boxminus f(M-f(M-f(M)))
\end{equation}
By assumption we have (\ref{e:permind}), and so $M\boxminus M-f(M-f(M))$ from (\ref{e:picbic-l}). Complementation then gives that $\emptyset \boxminus f(M-f(M))$, which means that $M-f(M) = \emptyset$, and so $f(M) = M$.  Thus $f$ is surjective, and so a permutation. 
\end{proof}

Bicardinally equivalent concepts are thus interesting in their own right.  So it makes sense to observe the behavior of purely defined equivalence relations on just concepts that are $\boxminus$-equivalent.

For this reason we introduce the following notation.  As is customary, given an equivalence relation $E$ and a concept $X$ we denote the $E$-equivalence class containing $X$ by $[X]_E$. 
Looking at how $E$ behaves on the sets $\boxminus$-equivalent to $X$, we say that the \emph{bicardinal slice 
of $E$} is the set of equivalence classes of $\boxminus$-equivalent sets.  More formally, 
\begin{defn}
Let $\mathcal M$ be a model and $E$ an equivalence relation on sets of $\mathcal M$.  Given a set $X$ in $\mathcal M$, we let 
\begin{equation}
E(\boxminus)(X, Y) \Leftrightarrow X \boxminus Y \wedge E(X,Y)
\end{equation}
so that 
\begin{equation}
\xkl  =  [X]_\boxminus \cap [X]_E 
\end{equation}
Thinking of $E$ as a collection of concepts, we can then write
\begin{equation}
E(\boxminus) = \{ [X]_{E(\boxminus)} \mid X\in \mathcal M\}
\end{equation}
We can then think of $E$ as consisting in \emph{bicardinal slices} of the form \begin{equation}
\label{e:E(boxmin)_X}
E(\boxminus)_X = \{ [Y]_{E(\boxminus)} \mid X \boxminus Y\}\end{equation}
 since 
\begin{equation}
E(\boxminus) = \bigcup_{X\in \mathcal M} E(\boxminus)_X 
\end{equation}
\end{defn}
These notions will be used in stating Theorem~\ref{thm:main-formal}, the formal version of the \ref{main}.

\section{The bicardinal classification of equivalence relations}\label{sec:mainthm}

We are now nearly in a position to state our main results formally.  Our theorem will classify equivalence relations at bicardinal slices.  We first must formally state the classes into which any relation can be sorted.  

Theorem~\ref{thm:main-formal} is the formal version of the \ref{main}, which we prove in two stages.  The intermediate stop is Theorem~\ref{thm:main-0formal}, the remaining step is deploying the cardinality assumption \ref{cwf} to obtain the final result.  

Proving these theorems is onerous because non-standard models allow that concepts finite according to $\mathcal M$ might not be finite in the metatheory.  
This distinction is important because the proofs of our main results depend on Lemmata~\ref{lem:class} and~\ref{lem:extend}, which deploy, in the metatheory, finitely many permutations that ``move'' one concept onto another while preserving equivalence classes.  
Thus, we will at times wish to indicate 
for given sets $X$ and $Y$ that  finitely many copies of $Y$ are enough to cover $X$.  
%\marginpar{fixed sentence}
\label{page:dominates} Thus we write $n \times |Y| \geq |X|$ to mean that for some set $Z$ with $|Z|=n\in \mathbb{N}$, $\mathcal M\models |Z\times Y|\geq |X|$.  Similar expressions will be used in an associated way so that their meaning is obvious.  

However, since finiteness in $\mathcal M$ is not the same as finiteness in the metatheory, we cannot count on expected relations of cardinality obtaining.  In particular, \emph{in $\mathcal M$} we have the expected Archimedean property in that 
\begin{equation}\label{e:archimed}
(\forall X, Y)(|X|,|Y|<\omega\rightarrow (\exists Z)(|Z|<\omega \wedge |Z\times Y|\geq |X|))\end{equation}
is a theorem of second-order logic.  But in a non-standard model it is not in general true that for any $X,Y\in S_1[M]$ with $\mathcal M\models |X|,|Y|<\omega$, there is an $n\in \mathbb{N}$ such that $\mathcal M\models n\times |Y| \geq |X|$.  For this reason we introduce the following definition:

\begin{defn}
Given $X, Y\in S_i[M]$, we write $|X|\trianglelefteq |Y|$ to mean there is an $n\in \mathbb N$ such that 
\begin{equation}\label{e:trileft} \mathcal M\models n\times |Y|\geq |X|\end{equation}

Further, we write $|X|\vartriangleleft|Y|$ to mean $|X|\trianglelefteq|Y|$ and $\neg(|Y|\trianglelefteq |X|)$.  Note that $\trianglelefteq$ and $\vartriangleleft$ are relations \emph{in the metatheory};  they are not, in general, expressible in $L_0$.
\end{defn}

In what follows, all concepts are understood to be concepts in  given structures $\mathcal M$. 
%\marginpar{pluralized ``structure'' to remove ambiguity}
%;  that is, all such concepts belong to $S_1[M]$.  
In accordance with the two steps towards our main result, we give two definitions for the purposes of classification.  

\begin{defn}\label{def:trivsepcompl0}
Let $E$ be an $L_0$-definable equivalence relation over concepts of a given structure $\mathcal M$.  For any concept $X$, we say that $E(\boxminus)_X$  
\begin{enumerate}
\item is \emph{trivial} if $[X]_{E(\boxminus)} = [X]_\boxminus$.

\item \emph{is separative}, or \emph{refines a separation}, if for all $Y,Z\in [X]_\boxminus$, 
\begin{equation}\label{e:sepv}
	E(Y, Z) \Rightarrow |Y \triangle Z| \vartriangleleft |X|
\end{equation}

\item \emph{is complementative}, or \emph{refines a complementation}, if for all $Y,Z\in [X]_\boxminus$, 
\begin{equation}\label{e:compv}
	E(Y, Z) \Rightarrow (|Y \triangle Z| \vartriangleleft |X| \; \textrm{or}\; |M-(Y\triangle Z)|\vartriangleleft |X| )
\end{equation} 
\end{enumerate}
Obviously if $E(\boxminus)_X$ is separative then it is complementative;  therefore we say that $E(\boxminus)_X$ is \emph{properly separative} or \emph{refines a separation} 
%\marginpar{added ``refines\ldots" for terminological consistency}
if it is separative and non-trivial, and \emph{properly complementative} or \emph{refines a complementation} 
%\marginpar{added ``refines\ldots"}
if it is complementative, non-separative, and non-trivial.
\end{defn}

\begin{rmk}
    If $X$ is properly complementative and $Y\in [X]_{E(\boxminus)}$, then $M-Y\in [X]_{E(\boxminus)}$.
\end{rmk}
%\marginpar{Added remark}

%The classifications of Definition~\ref{def:trivsepcompl0} suffice for the intermediate Theorem~\ref{thm:main-0formal}.  But the following are more familiar, and arguably more useful.  

%\begin{defn}\label{def:trivsepcompl}
%Let $E$ be an $L_0$-definable equivalence relation over concepts of a given structure $\mathcal M$.  For any concept $X$, we say that $E(\boxminus)_X$  
%\begin{enumerate}

%\item \emph{refines a separation} if there is a $Z\subseteq X$ such that for all $Y, W\in [X]_\boxminus$, 
%\begin{equation}
%	E(Y, W) \Rightarrow |Y \triangle W| < |Z|
%\end{equation}

%\item \emph{refines a complementation} if it does not refine a separation and there is a $Z\subseteq X$ such that for all $Y,W\in [X]_\boxminus$, 
%\begin{equation}
%	E(Y, W) \Rightarrow (|Y \triangle W| < |Z| \textrm{or} |M-(Y\triangle W)|< |Z| )
%\end{equation}
%\end{enumerate}
%\end{defn}

We now state the intermediate and final results as:
%\marginpar{added ``the''}

\begin{thm}
\label{thm:main-0formal}
Let $E$ be an $L_0$-definable equivalence relation over a given structure $\mathcal M$.  Then for any concept $X$, $E(\boxminus)_X$ is either trivial, separative, or  complementative.  

If $|M|>2$ then exactly one of these options holds. 

If $X$ is finite in $\mathcal M$ and $E(\boxminus)_X$ is nontrivial, then either \begin{equation}\label{e:septight} \mathcal M\models E(Y,Z) \Leftrightarrow |Y\triangle Z| \vartriangleleft |X| \end{equation}
or 
\begin{equation} \label{e:comptight}
\mathcal M\models E(Y,Z) \Leftrightarrow \left[ |Y\triangle Z| \vartriangleleft |X| \; \textrm{or}\;  |M-(Y\triangle Z)|\vartriangleleft |X|\right]  \end{equation}
\end{thm}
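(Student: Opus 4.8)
The plan is to reduce the behavior of $E$ on the bicardinal slice $[X]_\boxminus$ to a question of cardinal arithmetic, and then read off the trichotomy from the ``sizes'' at which $E$ can relate distinct concepts. First I would use Permutation Invariance (\ref{p:perminv}) to show that, for $Y,Z,Y',Z'\in[X]_\boxminus$, whether $E(Y,Z)$ holds depends only on the four-region profile $(|Y\cap Z|,\,|Y-Z|,\,|Z-Y|,\,|M-(Y\cup Z)|)$. Indeed, if this profile agrees with that of $(Y',Z')$, then each of the four regions of the first pair is equinumerous in $\mathcal M$ with the corresponding region of the second, so by comprehension one can splice the four witnessing bijections into a single bijection $\pi\colon M\to M$ in $\mathcal M$ with $\pi(Y)=Y'$ and $\pi(Z)=Z'$; then (\ref{e:perminvpi}) yields $E(Y,Z)\leftrightarrow E(Y',Z')$. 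The bicardinal constraints $|Y|=|Z|=|X|$ and $|M-Y|=|M-Z|=|M-X|$, together with \ref{cc} and \ref{ism}, restrict which profiles occur and tie $|Y\triangle Z|$ and $|M-(Y\triangle Z)|$ to the region sizes.

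Second, I would establish a composition principle from transitivity of $E$: if $E$ relates some pair at a given displacement while leaving both the overlap $Y\cap Z$ and the co-region $M-(Y\cup Z)$ large, then permuting and chaining finitely many such moves produces $E$-related pairs whose symmetric difference is any prescribed finite multiple of the original displacement, with \ref{ism} keeping the overlap and co-region large through each splice. This is exactly the metatheoretic ``finitely many permutations'' device, and it is the reason the operative smallness notion is $\vartriangleleft|X|$ (finite-multiple domination) rather than strict cardinal inequality.

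The trichotomy then reduces to one key lemma: if some $E$-related pair $Y,Z\in[X]_\boxminus$ has both $|Y\triangle Z|$ and $|M-(Y\triangle Z)|$ failing to be $\vartriangleleft|X|$, then $E(\boxminus)_X$ is trivial. Granting this, there are two remaining cases. If no such pair exists, then every $E$-related pair satisfies $|Y\triangle Z|\vartriangleleft|X|$ or $|M-(Y\triangle Z)|\vartriangleleft|X|$, which is complementativity; and if in fact the first disjunct always holds, this is separativity. Proving the lemma is where composition does its work: a pair with both pieces large can, by shuffling the overlap and co-region freely and then chaining, be carried onto any target in $[X]_\boxminus$, collapsing the whole slice into one $E$-class. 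Exclusivity when $|M|>2$ follows because, once there is room for a genuinely nontrivial pair, the three options impose mutually incompatible bounds on $|Y\triangle Z|$.

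Finally, for the tightness clause when $X$ is finite in $\mathcal M$: here the profiles are rigid enough that the refinement implications of Definition~\ref{def:trivsepcompl0} become biconditionals. I would argue that for finite $X$ the composition principle is lossless---every displacement strictly $\vartriangleleft|X|$ is actually realized by an $E$-related pair---so nontriviality upgrades (\ref{e:sepv}) and (\ref{e:compv}) to the equivalences (\ref{e:septight}) and (\ref{e:comptight}). The main obstacle throughout is the composition step: guaranteeing that the overlap and co-region stay provably large across finitely many spliced permutations in a non-standard model, where a concept finite in $\mathcal M$ may be infinite in the metatheory, so that the $\vartriangleleft$-arithmetic behaves as intended.
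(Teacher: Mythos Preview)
Your outline is broadly correct and parallels the paper's strategy: reduce via \ref{p:perminv} to a four-region profile, then use finitely many chained permutation moves to collapse the slice when a sufficiently ``spread out'' $E$-related pair exists. The paper executes this through a different decomposition, however. Rather than your single ``composition principle'' plus key lemma, it introduces the notions \emph{almost complementary}, \emph{symmetric}, and \emph{opportune} (Definition preceding Lemma~\ref{lem:class}), proves four move-lemmas (\ref{p:AtoX}--\ref{p:AtoS}) that shuttle prescribed pieces between the four regions under carefully stated $\trianglelefteq$-hypotheses, and then shows in Lemma~\ref{lem:class} that any opportune pair forces $RF(X)\subseteq\xkl$. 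The infinite case is then handled separately (Lemma~\ref{lem:extend}) via \emph{ideally} opportune pairs and a disjoint-copy trick, with Corollary~\ref{cor:anti1} covering $|M-X|<|X|$ by passing to complements. The tightness clause is Lemma~\ref{p:nuisancelike}, argued by explicit constructions rather than an abstract ``losslessness'' claim.

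There is one concrete mismatch in your version worth flagging. Your composition principle requires \emph{both} the overlap $Y\cap Z$ and the co-region $M-(Y\cup Z)$ to be large, but your key lemma's hypothesis is only that $|M-(Y\triangle Z)|\not\vartriangleleft|X|$, which by (\ref{e:nonAsplit}) guarantees just that \emph{one} of $|Y\cap Z|$, $|M-(Y\cup Z)|$ is not $\vartriangleleft|X|$. The paper's move-lemmas are calibrated to need only one of these at a time (see the disjunction (\ref{e:AXS})/(\ref{e:AXExt}) in Lemma~\ref{p:AtoX} and (\ref{e:EtS1})/(\ref{e:EtS2}) in Lemma~\ref{p:ExttoS}), and the ``opportune'' condition is precisely the conjunction of one such disjunction with the failure of symmetry. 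Your profile-based route can be made to work, but you will need to weaken the composition hypothesis to this disjunctive form and then verify that the resulting moves still compose---which is exactly where the paper's four-lemma machinery earns its keep.
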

%\marginpar{changed \eqref{e:comptight} to match \eqref{e:compv}}

\begin{thm}[Formal Main Theorem]
\label{thm:main-formal}  
Let $E$ be an $L_0$-definable equivalence relation over a given structure $\mathcal M$.  Then for any concept $X$, $E(\boxminus)_X$ is either trivial, separative, or complementative.

If $|M|>2$ then exactly one of these options holds.  

Further, if $\mathcal M \models |X|<\omega$ then $[X]_{E(\boxminus)}$ is either $[X]_\boxminus$, or $\{ X\}$, or $\{X, M-X\}$.
\end{thm}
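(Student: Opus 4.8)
The plan is to derive Theorem~\ref{thm:main-formal} from the intermediate Theorem~\ref{thm:main-0formal} by invoking the well-foundedness principle \ref{cwf}. The first two assertions are literally the first two assertions of Theorem~\ref{thm:main-0formal}, so I would simply cite it; all of the new content lies in the ``Further'' clause, where the tight characterizations (\ref{e:septight}) and (\ref{e:comptight}) for finite $X$ must be turned into an explicit description of $[X]_{E(\boxminus)}$. The first step is a reduction by permutation invariance. Fix $X$ with $\mathcal M\models|X|<\omega$ and let $Y,Z\in[X]_\boxminus$. Since $|Y|=|Z|=|X|$, the four cardinals $|Y\cap Z|$, $|Y-Z|$, $|Z-Y|$ and $|M-(Y\cup Z)|$ are all determined by the single number $k=|Y-Z|=|Z-Y|$, together with the fixed $|X|$ and $|M|$. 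Any two pairs sharing the same $k$ are therefore carried onto one another by a bijection of $M$ in $\mathcal M$ (assembled by comprehension from bijections between the matching pieces; this is the role of Lemmata~\ref{lem:class} and~\ref{lem:extend}), so by \ref{p:perminv} the truth value of $E(Y,Z)$ depends on $k$ alone. Writing $K=\{k:\mathcal M\models E(X,Y)\text{ whenever }|X-Y|=k\}$, I record that $0\in K$ by reflexivity, that $[X]_{E(\boxminus)}=\{Y\in[X]_\boxminus:|X-Y|\in K\}$, and, in the complementative case, that the Remark following Definition~\ref{def:trivsepcompl0} yields the symmetry $k\in K\Leftrightarrow|X|-k\in K$.

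The heart of the matter, and the step that uses \ref{cwf}, is the no-gap claim: if some $k$ with $0<k<|X|$ lies in $K$ then $K=\{0,1,\dots,|X|\}$. A single-swap construction shows first that any such $k$ forces $1\in K$ (place two deviation-$k$ representatives at mutual deviation $1$ and apply transitivity of $E$), and then that from a deviation-$(j-1)$ representative one reaches a deviation-$j$ one by a single swap, so $j-1\in K$ implies $j\in K$. Iterating the latter by hand would require metatheoretically $|X|$-many steps, which is illegitimate precisely when $X$ is Dedekind finite but nonstandard in the metatheory. I would instead apply \ref{cwf} to the $L_0$-definable property (with parameter $X$) ``$W$ has the cardinality of a deviation size not in $K$'', extracting via comparability \ref{cc} a least such size $k_{\min}$; since $0,1\in K$ one has $k_{\min}\ge 2$, and the single-swap step from a deviation-$(k_{\min}-1)$ representative puts $k_{\min}$ into $K$, a contradiction. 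Thus \ref{cwf} carries out internally the induction the metatheory cannot, and the no-gap claim holds. Consequently $K$ is either $\{0\}$, or everything, or (in the symmetric complementative case) $\{0,|X|\}$.

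It remains to match these to the three profiles. If $E(\boxminus)_X$ is trivial then $[X]_{E(\boxminus)}=[X]_\boxminus$ by definition. If it is properly separative, then by (\ref{e:septight}) every $k\in K$ satisfies $|X\triangle Y|=2k\vartriangleleft|X|$; the no-gap claim rules out any $k$ with $0<k<|X|$ (which would make $E(\boxminus)_X$ trivial), while $k=|X|$ is excluded because $2|X|\vartriangleleft|X|$ is false. Hence $K=\{0\}$ and $[X]_{E(\boxminus)}=\{X\}$. If it is properly complementative, I would first observe that for finite $X$ the complementary clause of (\ref{e:comptight}) can fire only when some $Y\in[X]_\boxminus$ is ``antipodal'' to $X$, i.e.\ when $M-X\in[X]_\boxminus$ and so $|M-X|=|X|$; otherwise (\ref{e:comptight}) reduces to (\ref{e:septight}) and the slice is separative, contrary to properness. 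With $M-X\in[X]_\boxminus$ in force, the no-gap claim applied at both ends, together with the $k\leftrightarrow|X|-k$ symmetry, leaves exactly $K=\{0,|X|\}$, so that $[X]_{E(\boxminus)}=\{X,M-X\}$.

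The single genuine obstacle, which shapes the whole argument, is the divergence between Dedekind finiteness in $\mathcal M$ and finiteness in the metatheory: the natural ``chain the swaps'' proof of the no-gap claim is a metatheoretic induction of possibly nonstandard length and is therefore unavailable. Replacing it by the internal minimisation supplied by \ref{cwf} is exactly what upgrades Theorem~\ref{thm:main-0formal} to Theorem~\ref{thm:main-formal}.
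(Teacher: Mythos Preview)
Your proposal is correct and follows essentially the same strategy as the paper: both derive the first two clauses verbatim from Theorem~\ref{thm:main-0formal} and use \ref{cwf} to upgrade the ``Further'' clause by carrying out internally an induction that the metatheory cannot supply when $X$ is Dedekind finite but nonstandard.

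The organization differs slightly. The paper packages the argument via a notion of \emph{measure}: a concept $J$ such that $E(Y,Z)$ is equivalent to $|Y\triangle Z|\vartriangleleft|J|$ (or its complementative variant). It then applies \ref{cwf} to the $L_0$-formula $L(\xi)$ asserting that all $E$-equivalent pairs in the slice have symmetric difference (or co-symmetric-difference) strictly smaller than $|\xi|$, extracts a least such $J'$, and derives a contradiction by exhibiting, via a two-point perturbation, an equivalent pair $U',V'$ whose symmetric difference exceeds $|J'|$. You instead parametrize by the set $K$ of admissible deviation sizes and apply \ref{cwf} to the complement of $K$, extracting a least forbidden $k_{\min}$ and contradicting it by a single swap from a deviation-$(k_{\min}-1)$ representative. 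These are dual minimizations (least upper bound versus least gap) driving the same one-step construction; your phrasing is arguably more transparent, the paper's slightly more compact.

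Two small notes. First, your citation of Lemmata~\ref{lem:class} and~\ref{lem:extend} in the reduction step is misplaced: the fact that $E(Y,Z)$ depends only on $k=|Y-Z|$ follows directly from \ref{p:perminv} and comprehension (assemble the permutation from bijections between the four matching regions); those lemmata concern opportune sets and play no role here. Second, the upper endpoint of $K$ is $\min(|X|,|M-X|)$ rather than $|X|$; this does not disturb your case analysis, since in the properly complementative case you correctly force $|X|=|M-X|$, and in the separative case your no-gap argument still dispatches the maximal deviation.
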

As we have said above, this is a strong version of Fine's theorem---as one can see with the aid of a few pages of Venn Diagrams.

\subsection{Non-Archimedian arithmetic}\label{ssec:nonA}

The proof of Theorems~\ref{thm:main-0formal} and~\ref{thm:main-formal} will require, for a given $X$, the use of finitely (in the metatheory) many permutations, each of which fixes some member of $[X]_{E(\boxminus)}$.  These permutations will ``shuttle'' portions of a given set $Z$ onto its image under a bijection $f:Z\rightarrow X$.

We will deploy the relations $\trianglelefteq$ and $\vartriangleleft$ in the next section to prove Lemma~\ref{lem:class}, which is crucial to proving Theorem~\ref{thm:main-formal} in its full generality.  For this we will need to show that certain expected ``arithmetic'' relations hold.  Though tedious to state, we provide them as follows.

\begin{prop}  \label{p:nonA} The following hold for all $X,Y,Z,W\in \mathcal M$:
\begin{align}
\label{e:nonAcomp}
|X|\trianglelefteq |Y|\; &\textrm{or}\; |Y|\trianglelefteq|X|\\
\label{e:nonAaddit}
|X| \trianglelefteq |Y| \;\textrm{and} \; |Z|\leq|W| &\Rightarrow |X\sqcup Z| \trianglelefteq |Y\sqcup W| \\
\label{e:nonAsubt}
%|X|,|Y|,|Z|,|W|<\omega \rightarrow {} \\
 (|Z|=|W| \;\textrm{and} \; |X\sqcup Z| \vartriangleleft |Y\sqcup W| &\Rightarrow |X|\vartriangleleft |Y|) \\
\label{e:nonAtrans}
%|X|,|Y|,|Z|<\omega \rightarrow  
|X|\trianglelefteq |Y| \trianglelefteq |Z| &\Rightarrow |X|\trianglelefteq |Z| \\
\label{e:nonAsplit}
|X|\trianglelefteq |Y\sqcup Z| &\Rightarrow ( |X|\trianglelefteq |Y| \;\textrm{or}\; |X|\trianglelefteq |Z|) \\
\label{e:nonAsplcor}
|X| = |Y\sqcup Z| & \Rightarrow |X|\trianglelefteq|Y|\; \textrm{or} \;|X|\trianglelefteq|Z|\\
\label{e:nonApsplit}
|X|\vartriangleleft |Y\sqcup Z| &\Rightarrow (|X|\vartriangleleft |Y| \;\textrm{or}\; |X|\vartriangleleft |Z|) \\
\label{e:nonApslcor} 
|X|\vartriangleleft |X\sqcup Y| & \Rightarrow |X|\vartriangleleft|Y|\\
\label{e:nonAlsplit}
|Y\sqcup Z|\trianglelefteq|X| &\Rightarrow |Y|\trianglelefteq|X| \; \textrm{and}\; |Z|\trianglelefteq|X| \\
\label{e:nonAplsplit}
|Y\sqcup Z|\vartriangleleft|X| &\Rightarrow |Y|\vartriangleleft|X| \; \textrm{and}\; |Z|\vartriangleleft|X| \\
\label{e:nonAexp}
|X|\vartriangleleft|Y| & \Rightarrow |X|\vartriangleleft |Y\sqcup Z|\\
\label{e:nonAsubl}
|X\sqcup Y| \trianglelefteq |Z| \;\textrm{and} \; |W|=|Y| &\Rightarrow |X\sqcup W| \trianglelefteq |Z| \\
\label{e:nonAsubr}
|X| \trianglelefteq |Y \sqcup Z| \;\textrm{and} \; |W|=|Y| &\Rightarrow |X| \trianglelefteq |W\sqcup Z|\\
\label{e:nonAsublp}
|X\sqcup Y| \vartriangleleft |Z| \;\textrm{and} \; |W|=|Y| &\Rightarrow |X\sqcup W| \vartriangleleft |Z| \\
\label{e:nonAsubrp}
|X| \vartriangleleft |Y \sqcup Z| \;\textrm{and} \; |W|=|Y| &\Rightarrow |X| \vartriangleleft |W\sqcup Z|
\end{align}
%\marginpar{Fixed \eqref{e:nonAsubr}---\eqref{e:nonAsubrp}}; $|W|=|Y|$}
\end{prop}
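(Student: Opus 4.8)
The plan is to derive the entire list from the definitions of $\trianglelefteq$ and $\vartriangleleft$ together with a few internal bijection facts, organizing the sixteen items by dependency rather than proving each from scratch. Two facts I would isolate at the outset. First, a \emph{congruence} principle: the truth of $|X|\trianglelefteq|Y|$ and of $|X|\vartriangleleft|Y|$ depends only on the $\mathcal M$-cardinalities $|X|,|Y|$, since a witnessing $Z$ with $\mathcal M\models|Z\times Y|\geq|X|$ and $|Z|=n$ composes with any bijections $|X|=|X'|$, $|Y|=|Y'|$ to witness the same relation for $X',Y'$; and likewise $|A\sqcup B|$ and $|A\times B|$ depend only on $|A|,|B|$. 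Second, for a \emph{standard} $n$ and concepts of $\mathcal M$ there are, by comprehension, internal bijections $n\times(Y\sqcup Z)\approx (n\times Y)\sqcup(n\times Z)$ and $n\times(m\times Z)\approx (nm)\times Z$, and $X\preceq Y$ in $\mathcal M$ entails $n\times X\preceq n\times Y$. The congruence principle disposes immediately of the four substitution laws \eqref{e:nonAsubl}--\eqref{e:nonAsubrp}, since there $|X\sqcup W|=|X\sqcup Y|$ (resp.\ $|W\sqcup Z|=|Y\sqcup Z|$) whenever $|W|=|Y|$, so both the $\trianglelefteq$-clause and, for the $\vartriangleleft$-versions, the negated $\trianglelefteq$-clause transfer verbatim.

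Next I would settle the four ``base'' relations. Comparability \eqref{e:nonAcomp} is immediate from \ref{cc}: whichever of $X\preceq Y$, $Y\preceq X$ holds in $\mathcal M$ yields the corresponding $\trianglelefteq$ with witness $n=1$. For additivity \eqref{e:nonAaddit} I use the witness $n+1$: given $X\preceq n\times Y$ and $Z\preceq W$, the $n$ copies of $Y\sqcup W$ cover $X$ (via $X\preceq n\times Y$) and the one extra copy covers $Z$ (via $Z\preceq W$), so $X\sqcup Z\preceq (n+1)\times(Y\sqcup W)$, giving $|X\sqcup Z|\trianglelefteq|Y\sqcup W|$. Transitivity \eqref{e:nonAtrans} uses the witness $nm$: from $X\preceq n\times Y$, $Y\preceq m\times Z$ and the monotonicity of $n\times(-)$ we get $X\preceq n\times Y\preceq n\times(m\times Z)\approx (nm)\times Z$. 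Transitivity then yields left-splitting \eqref{e:nonAlsplit} at once, since $Y\preceq Y\sqcup Z$ gives $|Y|\trianglelefteq|Y\sqcup Z|\trianglelefteq|X|$ (symmetrically for $Z$); I would also record the ``reverse additivity'' fact---$|Y|\trianglelefteq|X|$ and $|Z|\trianglelefteq|X|$ imply $|Y\sqcup Z|\trianglelefteq|X|$, again via an $n+m$ witness---for use below.

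The strict relations I would handle by the uniform pattern ``assume the relevant $\trianglelefteq$ runs the wrong way and contradict''. Subtraction \eqref{e:nonAsubt} follows from \eqref{e:nonAcomp} and \eqref{e:nonAaddit}: if $|Y|\trianglelefteq|X|$ then additivity with $|W|=|Z|$ (hence $|W|\leq|Z|$) gives $|Y\sqcup W|\trianglelefteq|X\sqcup Z|$, contradicting $|X\sqcup Z|\vartriangleleft|Y\sqcup W|$; so $\neg(|Y|\trianglelefteq|X|)$, and comparability supplies $|X|\trianglelefteq|Y|$, whence $|X|\vartriangleleft|Y|$. Splitting \eqref{e:nonAsplit} is the one genuinely new step: from $X\preceq n\times(Y\sqcup Z)\approx (n\times Y)\sqcup(n\times Z)$ I use \ref{cc} to assume without loss $n\times Z\preceq n\times Y$, and then the \emph{doubling inequality} $(n\times Y)\sqcup(n\times Z)\preceq (n\times Y)\sqcup(n\times Y)\approx 2n\times Y$ gives $X\preceq 2n\times Y$, i.e.\ $|X|\trianglelefteq|Y|$ with witness $2n$; its corollary \eqref{e:nonAsplcor} is the special case $|X|=|Y\sqcup Z|$. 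The remaining strict splittings combine these: for \eqref{e:nonApsplit} I apply \eqref{e:nonAsplit} to obtain (say) $|X|\trianglelefteq|Y|$, and if both $\vartriangleleft$-conclusions failed then $|Y|\trianglelefteq|X|$ and $|Z|\trianglelefteq|X|$, so reverse additivity gives $|Y\sqcup Z|\trianglelefteq|X|$, contradicting the hypothesis; \eqref{e:nonApslcor} is \eqref{e:nonApsplit} with $Y$ in both remaining slots together with the impossibility of $|X|\vartriangleleft|X|$; and \eqref{e:nonAplsplit}, \eqref{e:nonAexp} follow from \eqref{e:nonAlsplit} and transitivity by the same ``wrong-way'' contradiction.

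The main obstacle, and the thing demanding care throughout, is that $\trianglelefteq$ quantifies over a \emph{metatheoretic} natural number $n$ while the cardinal comparisons it invokes live inside a possibly non-standard $\mathcal M$, where a concept $\mathcal M$ deems finite may be genuinely infinite. I would therefore scrupulously avoid the internal Archimedean property \eqref{e:archimed} and, crucially, avoid \ref{ism} and \ref{ipm} except on concepts $\mathcal M$ certifies infinite---which is exactly why the arguments above route through \ref{cc}, the doubling inequality $A\sqcup B\preceq A\sqcup A$, and explicit witness arithmetic ($n+1$, $nm$, $2n$, $n+m$) rather than through ``$\max$''. Each witness must be produced by an honest external induction on the standard $n$, so that the finite-product and disjoint-union bijections compose correctly; once one keeps the external $n,m$ rigorously apart from $\mathcal M$'s internal finiteness, every item reduces to elementary internal bijections secured by comprehension and the Schr\"oder--Bernstein theorem.
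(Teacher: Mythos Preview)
Your proposal is correct and follows essentially the same route as the paper's appendix proof: the same standard-natural witnesses ($n{+}1$ for additivity, $nm$ for transitivity, $2n$ for splitting via \ref{cc}, $n{+}m$ for reverse additivity), and the same contrapositive pattern for the strict $\vartriangleleft$-clauses. Your organization is in fact a bit cleaner---isolating the congruence principle to dispose of \eqref{e:nonAsubl}--\eqref{e:nonAsubrp} at once, and naming reverse additivity explicitly---and your handling of \eqref{e:nonAsplit} via \ref{cc} on $n\times Y$ versus $n\times Z$ avoids a small slip in the paper's version, which invokes \eqref{e:nonAcomp} (i.e.\ $\trianglelefteq$) where it really needs a direct $\preceq$ from \ref{cc}.
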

Their proofs are even more tedious, so we relegate them to Appendix~\ref{appendix:props}.

The following two propositions relate infinity, in both its meta- and intra-theoretic senses, to the relation $\vartriangleleft$, and are easy to prove using (\ref{e:nonAcomp}) and \ref{ipm}.
\begin{prop}
\label{p:easytoprove}
For all $X,Y\in \mathcal M$, 
\begin{equation}\label{e:easytoprove}|X|\vartriangleleft|Y| \Leftrightarrow {} \; \textrm{for all}\; n\in \omega, \mathcal M\models n\times |X|<|Y|\end{equation}
\end{prop}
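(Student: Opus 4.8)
The plan is to recognize the right-hand condition as a direct unpacking of the second conjunct in the definition of $\vartriangleleft$, the first conjunct then coming essentially for free. Recall that $|X|\vartriangleleft|Y|$ abbreviates $|X|\trianglelefteq|Y|$ together with $\neg(|Y|\trianglelefteq|X|)$, and that $|Y|\trianglelefteq|X|$ means there is some $n\in\mathbb N$ with $\mathcal M\models n\times|X|\geq|Y|$. Hence $\neg(|Y|\trianglelefteq|X|)$ says precisely that no $n\in\omega$ gives $\mathcal M\models n\times|X|\geq|Y|$. First I would observe that, because \ref{cc} holds in $\mathcal M$, the failure of $n\times|X|\geq|Y|$ is equivalent in $\mathcal M$ to the strict inequality $n\times|X|<|Y|$: by cardinal comparability one of $|Y|\leq n\times|X|$ or $n\times|X|\leq|Y|$ holds, the failure of the first forces the second, and the resulting inequality is strict precisely because the first fails. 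Thus $\neg(|Y|\trianglelefteq|X|)$ is literally the statement that for all $n\in\omega$, $\mathcal M\models n\times|X|<|Y|$.

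With this translation in hand, the forward direction is immediate: if $|X|\vartriangleleft|Y|$ then in particular $\neg(|Y|\trianglelefteq|X|)$ holds, which is exactly the desired right-hand side.

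For the converse I would assume the right-hand side and recover both conjuncts of $\vartriangleleft$. The translation already returns $\neg(|Y|\trianglelefteq|X|)$. To supply the missing conjunct $|X|\trianglelefteq|Y|$, I would either appeal to the metatheoretic comparability \ref{e:nonAcomp}, which yields $|X|\trianglelefteq|Y|$ once $|Y|\trianglelefteq|X|$ has been excluded, or---more directly---instantiate the hypothesis at $n=1$ to obtain $\mathcal M\models|X|<|Y|$, whence $\mathcal M\models|X|\leq|Y|$, which is exactly $|X|\trianglelefteq|Y|$ witnessed by $n=1$. Either route closes the biconditional.

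The one place demanding care is the translation step, where the passage from the failure of $n\times|X|\geq|Y|$ to the strict inequality $n\times|X|<|Y|$ must be carried out inside $\mathcal M$; this is exactly where \ref{cc} is essential. By contrast, \ref{ipm} (and \ref{ism}) are not needed for the equivalence itself---they serve only to evaluate $n\times|X|$, collapsing it to $|X|$ when $X$ is infinite in $\mathcal M$---so the argument runs uniformly over the finite and infinite cases and presents no genuine obstacle.
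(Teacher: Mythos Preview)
Your proof is correct and matches the paper's approach: the paper does not spell out the argument but simply notes that the proposition follows easily from (\ref{e:nonAcomp}) and \ref{ipm}, and your unpacking via \ref{cc} is exactly how (\ref{e:nonAcomp}) itself is established in the appendix. Your observation that \ref{ipm} is not strictly needed here is accurate---the paper groups Propositions~\ref{p:easytoprove} and~\ref{p:triinf} together, and \ref{ipm} is really required only for the latter.
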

\begin{prop}\label{p:triinf}
If $\mathcal M\models |X|\geq \omega$, then 
\[ |Y|\vartriangleleft|X| \Leftrightarrow \mathcal M\models |Y|<|X|\]
\end{prop}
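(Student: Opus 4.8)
The plan is to reduce both directions to Proposition~\ref{p:easytoprove}, which (reading it with the roles of $X$ and $Y$ exchanged) says that $|Y|\vartriangleleft|X|$ holds exactly when $\mathcal M\models n\times|Y|<|X|$ for every $n\in\omega$. The forward implication is then immediate: if $|Y|\vartriangleleft|X|$, then instantiating this characterization at $n=1$ yields $\mathcal M\models|Y|<|X|$, since $1\times|Y|$ is just $|Y|$.

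For the converse, I would assume $\mathcal M\models|Y|<|X|$ together with the standing hypothesis $\mathcal M\models|X|\geq\omega$; by Proposition~\ref{p:easytoprove} it then suffices to verify $\mathcal M\models n\times|Y|<|X|$ for each fixed $n\in\omega$. Note first that $M$ is Dedekind infinite (as $X\subseteq M$ is), so $\mathcal M$ contains an $n$-element concept $Z$ and the product $Z\times Y$ is available by comprehension. I would then split on whether $Y$ is infinite in $\mathcal M$. If $\mathcal M\models|Y|\geq\omega$, then $Z\preceq Y$ (a finite concept injects into a Dedekind-infinite one) gives $Z\times Y\preceq Y\times Y$, while $Y\preceq Z\times Y$ via any fixed coordinate; since \ref{ipm} yields $\mathcal M\models|Y\times Y|=|Y|$, the Schr\"oder--Bernstein theorem gives $\mathcal M\models n\times|Y|=|Y|<|X|$. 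If instead $\mathcal M\models|Y|<\omega$, then $Z\times Y$ is a product of two Dedekind-finite concepts, hence Dedekind finite in $\mathcal M$ (a theorem of plain second-order logic), and every Dedekind-finite concept is strictly dominated by the Dedekind-infinite $X$, so $\mathcal M\models n\times|Y|<\omega\leq|X|$. Either way $\mathcal M\models n\times|Y|<|X|$, completing the converse.

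The only point requiring care---and the place the hypothesis $\mathcal M\models|X|\geq\omega$ does real work---is the non-standard gap between finitude in $\mathcal M$ and finitude in the metatheory. Because $n$ is a genuine (metatheoretic) natural number while $Y$ may be finite-in-$\mathcal M$ yet metatheoretically infinite, one cannot appeal to the intra-model Archimedean property~(\ref{e:archimed}); instead the argument keeps $n$ copies of $Y$ below $|X|$ directly, using \ref{ipm} to collapse finitely many copies of an infinite $Y$ back to $|Y|$, and using the Dedekind-infinitude of $X$ to swallow any Dedekind-finite concept in the remaining case. No Archimedean bound between $|Y|$ and $|X|$ is ever needed, which is exactly why the infinitary hypothesis tames the relation $\vartriangleleft$ here.
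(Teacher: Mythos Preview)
Your proof is correct and carries out precisely what the paper indicates: the paper gives no detailed argument, merely noting that the proposition is ``easy to prove using (\ref{e:nonAcomp}) and \ref{ipm}.'' Your reduction to Proposition~\ref{p:easytoprove} together with the case split on whether $|Y|\geq\omega$---collapsing $n\times|Y|$ via \ref{ipm} in the infinite case and invoking the Dedekind-finiteness of $Z\times Y$ (a metatheoretically finite union of Dedekind-finite concepts) in the remaining case---is exactly the natural elaboration of that hint.
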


\subsection{Preparatory results}\label{ssec:prepR}

Our route to proving Theorem~\ref{thm:main-formal} is via Lemma~\ref{lem:class}, which is proved in the next section.  This lemma specifies sufficient conditions for when $E(\boxminus)_X$ is trivial.  The main way that we prove Lemma~\ref{lem:class} is by exploiting the following consequence of \ref{p:perminv}:

\begin{prop}
\label{p:fixes}
Let $\pi\in \mathcal M$ be a permutation of $M$ such that $\pi$ fixes $X$.  Then for any $Y\in [X]_E$, $\pi(Y)\in [X]_E$.
\end{prop}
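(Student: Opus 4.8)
The plan is to derive this as an immediate consequence of Permutation Invariance. Recall that membership $Y\in [X]_E$ is by definition the assertion $\mathcal M\models E(X,Y)$, and that the desired conclusion $\pi(Y)\in [X]_E$ unfolds to $\mathcal M\models E(X,\pi(Y))$. So the entire task reduces to passing from $E(X,Y)$ to $E(X,\pi(Y))$.

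To do this, first I would invoke the standing hypotheses that $E$ is $L_0$-definable and that $\pi\in\mathcal M$ is a bijection of $M$; these are precisely the conditions needed to apply the second clause of \ref{p:perminv}, namely (\ref{e:perminvpi}), which gives $\mathcal M\models E(X,Y)\leftrightarrow E(\pi(X),\pi(Y))$. Next I would use the remaining hypothesis, that $\pi$ fixes $X$, i.e.\ $\pi(X)=X$, to rewrite the right-hand side of this biconditional as $E(X,\pi(Y))$. Combining, the biconditional becomes $E(X,Y)\leftrightarrow E(X,\pi(Y))$, and since the left side holds by the assumption $Y\in[X]_E$, so does the right, yielding $\pi(Y)\in[X]_E$ as required.

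There is essentially no obstacle here; the result is just the specialization of \ref{p:perminv} to a permutation that stabilizes $X$. The only points warranting a word of care are bookkeeping: that $\pi(Y)$ is a genuine concept of $\mathcal M$, so that the conclusion is even well-formed—this holds by comprehension, since $\pi\in\mathcal M$ and the image of a concept under a function in $\mathcal M$ is again in $\mathcal M$—and that (\ref{e:perminvpi}) requires $E$ to be permutation invariant, which is guaranteed by its $L_0$-definability. Indeed, the same argument goes through verbatim for any permutation-invariant $E$, not merely the $L_0$-definable ones, in keeping with the remark following \ref{p:perminv} that the main results depend only on permutation invariance.
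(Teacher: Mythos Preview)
Your proof is correct and follows exactly the same route as the paper: apply Permutation Invariance (\ref{e:perminvpi}) to obtain $E(X,Y)\leftrightarrow E(\pi(X),\pi(Y))$, then use $\pi(X)=X$ to conclude. The paper states this in a single line; your additional remarks on comprehension and the sufficiency of permutation invariance (rather than $L_0$-definability) are accurate but not needed for the argument itself.
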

For by \ref{p:perminv}, $Y\in [X]_E \Leftrightarrow \pi(Y)\in [\pi(X)]_E = [X]_E$, since $\pi$ fixes $X$.  
Intuitively, the idea is that we can show sets $X$ and $Y$ to be $E$-equivalent if $X$ is equivalent to some $Z$, and there is a permutation fixing $X$ but sending $Z$ to $Y$.

We introduce the following notation to shorten our exposition.
\begin{defn}
Let $f:X\rightarrow Y$ be a bijection with $X$ and $Y$ disjoint.  The we say that the \emph{permutation induced by $f$}, $\iota(f):M\rightarrow M$, is the function defined by 
\[ \iota(f)(x) = \begin{cases}
f(x) & x\in X \\
f^{-1}(x) & x\in Y \\
x & \textrm{otherwise}
\end{cases}
\]
\end{defn}
Notice that if $f\in \mathcal M$ then $\iota(f)\in \mathcal M$ is a well-defined bijection.

Our  goal in this subsection is to prove Lemmata~\ref{p:AtoX}, \ref{p:ExttoS},  \ref{p:ExtoX}, and \ref{p:AtoS}, the proofs of which exploit Proposition~\ref{p:fixes}.   Lemmata~\ref{p:AtoX} and \ref{p:ExttoS} are those with the most involved proofs. 
%\marginpar{elim. para. break}
In the remainder of this section, %all reasoning is within $\mathcal M$, except of course claims involving the relations $\trianglelefteq$, which directly relate to what obtains in $\mathcal M$.  W
where context makes subscripts unnecessary we will use ``$[X]$''  to indicate $[X]_E$.

\begin{lem}\label{p:AtoX}
Suppose there is a $Y\neq X$ with $Y\in [X]%_{E(\boxminus)}$ 
$ and let $Z\subset Y- X$ with $\mathcal M\models Z\stackrel{f}\preceq X- Y$.  Suppose further that at least one of the following obtains:
\begin{align}
\label{e:AXS}
|Z|&\trianglelefteq |X\cap Y|\\
\label{e:AXExt}
|Z|&\trianglelefteq |M- (X\cup Y)|
\end{align}

Then there is a permutation $\pi\in \mathcal M$ such that $[\pi X]=[X]$ and $Z\subset \pi(X- Y)\subset \pi(X)$, and \begin{equation}
\label{e:Exfixes}  \pi(x)\neq x \Leftrightarrow x\in Z\cup f(Z)
\end{equation}
\end{lem}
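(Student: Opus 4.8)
The plan is to take the witnessing permutation to be $\pi = \iota(f)$ itself, regarding $f$ as the bijection $Z \to f(Z)$. Since $Z \subseteq Y - X$ and $f(Z) \subseteq X - Y$ are disjoint, $\iota(f) \in \mathcal M$ is a well-defined bijection moving exactly the points of $Z \cup f(Z)$, which is precisely (\ref{e:Exfixes}). A direct computation gives $\iota(f)(X) = (X \cap Y) \cup \big((X - Y) - f(Z)\big) \cup Z$ and $\iota(f)(X - Y) = \big((X - Y) - f(Z)\big) \cup Z$, so that $Z \subseteq \iota(f)(X - Y) \subseteq \iota(f)(X)$. Thus the entire content of the lemma reduces to the single claim that $[\iota(f)(X)] = [X]$, i.e.\ that $E(X, \iota(f)(X))$ holds. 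Note that the witness $\pi = \iota(f)$ and the (larger) auxiliary permutations used to establish this claim will be different objects: we only need $\iota(f)(X)$, as a set, to lie in $[X]$.

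To prove the claim I would not try to realise $\iota(f)(X)$ from $X$ by a single permutation fixing $X$ or $Y$: in a non-standard model $|Z|$ may exceed $|X \cap Y|$ (or $|M-(X\cup Y)|$) even though $|Z| \trianglelefteq |X\cap Y|$, so no one permutation can ``absorb'' $Z$ at once. Instead I would move $Z$ across in finitely many chunks. Suppose first (\ref{e:AXS}) holds, so there is $n \in \mathbb N$ with $\mathcal M \models n \times |X\cap Y| \geq |Z|$. Pulling back along an injection $Z \hookrightarrow W \times (X\cap Y)$ with $|W| = n$ yields (by comprehension) a partition $Z = Z_1 \sqcup \dots \sqcup Z_n$ with each $|Z_i| \leq |X\cap Y|$, together with injections $g_i : Z_i \to X\cap Y$; write $A_i = g_i(Z_i) \subseteq X \cap Y$ and fix a bijection $h_i : A_i \to f(Z_i)$, which exists since $|A_i| = |Z_i| = |f(Z_i)|$. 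Set $X^{(0)} = X$ and $X^{(i)} = \big(X^{(i-1)} - f(Z_i)\big) \cup Z_i$; all the relevant functions lie in $\mathcal M$ by comprehension.

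The heart of the argument is the inductive step showing $X^{(i)} \in [X]$ given $X^{(i-1)} \in [X]$, using two ``shuttling'' permutations. First let $\tau_i = \iota(g_i)$ swap $Z_i$ with $A_i$; since $Z_i \subseteq Y - X$ and $A_i \subseteq X \cap Y$ both lie inside $Y$, the permutation $\tau_i$ fixes $Y$ setwise, so by Proposition~\ref{p:fixes} applied to $Y \in [X]$ it maps $[X]$ into $[X]$, giving $\tau_i(X^{(i-1)}) = (X^{(i-1)} - A_i) \cup Z_i \in [X]$. Next let $\nu_i = \iota(h_i)$ swap $A_i$ with $f(Z_i)$; since $A_i \subseteq X \cap Y$ and $f(Z_i) \subseteq X - Y$ both lie inside $X$, the permutation $\nu_i$ fixes $X$ setwise, so again by Proposition~\ref{p:fixes} it preserves $[X]$, and a computation gives $\nu_i(\tau_i(X^{(i-1)})) = (X^{(i-1)} - f(Z_i)) \cup Z_i = X^{(i)} \in [X]$. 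Crucially the scratch block $A_i$ is removed by $\tau_i$ and restored by $\nu_i$, so $X \cap Y$ is left intact and reusable at the next stage. After $n$ steps $X^{(n)} = (X - f(Z)) \cup Z = \iota(f)(X)$, whence $\iota(f)(X) \in [X]$, as required.

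If instead (\ref{e:AXExt}) holds, the same scheme runs with $M - (X\cup Y)$ as scratch space: choosing blocks $D_i \subseteq M-(X\cup Y)$ with $|D_i| = |Z_i|$, I would first swap $f(Z_i)$ with $D_i$ by a permutation fixing $Y$ (both sets lie in $M - Y$) and then swap $D_i$ with $Z_i$ by a permutation fixing $X$ (both lie in $M - X$), once more restoring the scratch block at each stage and invoking Proposition~\ref{p:fixes} twice. The main obstacle throughout is exactly the failure of the Archimedean property in non-standard $\mathcal M$: it forces the passage from one absorbing permutation to a finite iteration governed by the metatheoretic relation $\trianglelefteq$, and it is the observation that the auxiliary block is returned after each pair of swaps that lets the finitely many stages compose correctly while keeping every intermediate set inside $[X]$.
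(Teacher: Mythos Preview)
Your argument is correct and follows essentially the same strategy as the paper's: partition $Z$ into $n$ chunks using the witness for $\trianglelefteq$, then shuttle each chunk through the scratch region ($X\cap Y$ or $M-(X\cup Y)$) by permutations that alternately fix $Y$ and $X$, invoking Proposition~\ref{p:fixes} at each step. The paper uses three permutations $p_j, q_j, r_j$ per chunk (fixing $Y$, then the shifted $p_j(X)$, then the shifted $q_j(p_j(Y))$) and assembles the final $\pi$ as their composite, whereas you more economically take $\pi=\iota(f)$ from the outset, reduce the whole lemma to the single claim $\iota(f)(X)\in[X]$, and verify this with two permutations per chunk that fix the \emph{original} $Y$ and $X$; this separation of the ``output'' permutation from the auxiliary class-preserving chain makes the support condition (\ref{e:Exfixes}) immediate and streamlines the bookkeeping, but the underlying idea is the same.
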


\begin{proof}
The proof is essentially the same whether (\ref{e:AXS}) or (\ref{e:AXExt}) obtains.  We set $W$ to be $X\cap Y$ if the former obtains, and $M- (X\cup Y)$ if not.  We reason in $\mathcal M$.

As in the proposition, let $Z\stackrel{f}{\preceq} X- Y$, and let $Z \stackrel{g}{\preceq} n \times W$.  The function  $g$ induces for each $j<n$ a partial function $h_j:W\rightarrow Z$ defined by $h_j(w) = g^{-1}(j,w)$.  Note that each $h_j$ is injective from its domain to its range, which are disjoint;  so $\iota(h_j)$ is well-defined for each $j<n$.

We define for each $j$ three permutations, as follows:
\begin{align}
\label{e:pj1}
p_j & = \iota (h_j) \\
\label{e:qj1}
q_j & = \iota(f\upharpoonright \rng{h_j}) \\
\label{e:rj1}
r_j(x) & =\begin{cases}
q_j(p_j(x)) & x\in \dom{h_j} \\
p_j(q_j(x)) & x \in f(\rng{h_j}) \\
x &\otherwise
\end{cases} 
\end{align}
So  $p_j$ switches $\rng{h_j}$ and $\dom{h_j})$, $q_j$ switches $\rng{h_j}$ and $f(\rng{h_j})$, and $r_j$ switches $f(\rng{h_j})$ and $\dom{h_j}$.

Note that $p_j$ fixes $Y$, $q_j$ fixes $p_j(X)$, and $r_j$ fixes $q_j(p_j(Y))$;  thus three applications of Proposition~\ref{p:fixes} yield that for $s_j = p_j\circ q_j\circ r_j$, \begin{equation}
\label{e:jeqclass} s_j(X), s_j(Y)\in [s_j(X)] = [X]
\end{equation}

Note further that for $w\in \dom{h_j}$, 
\begin{equation}\label{e:sfixes}
s_j(w) = r_j(q_j(p_j(w))) = q_j(p_j(q_j(p_j(w)))) = p_j(p_j(w)) = w
\end{equation}
where the third equality is due to the fact that $q_j\upharpoonright \dom{h_j}$ is the identity function.  It follows from this and the construction of $p_j,q_j$, and $r_j$ that if $s_j(x) \neq x$, then $x\in \rng{h_j} \cup f(\rng{h_j})$.  Conversely, if $x\in \rng{h_j}\subset Y- X$, then $s_j(x)\in X- Y$, and if $x\in f(\rng{h_j})\subset X- Y$, then $s_j(x)\in Y- X$.  Thus if $x\in \rng{h_j}\cup f(\rng{h_j})$ then $s_j(x)\neq x$.  So we have established 
\begin{equation}
\label{e:jfix}
s_j(x)\neq x \Leftrightarrow x\in \rng{h_j} \cup f(\rng{h_j})
\end{equation}

Set $\pi = s_0 \circ \ldots \circ s_{n-1}$, and note that $Z = \bigcup_{j<n}\rng{h_j}$. So we have
\begin{align} \label{e:AX1}
\pi(X- Y) &  = s_0 \circ\ldots\circ s_{n-1}\left(X- Y\right) \\
\label{e:AX4}& =  s_0 \circ\ldots\circ s_{n-1}\left(\left(\left(X- Y\right)- f(Z)\right) \cup f(Z)\right) \\
\label{e:AX5}& =  ((X- Y)- f(Z)) \cup s_0 \circ\ldots\circ s_{n-1}(f(Z)) \\
\label{e:AX6}& =  ((X- Y)- f(Z)) \cup s_0 \circ\ldots\circ s_{n-1}\left( f\left(\bigcup_{j<n}\rng{h_j}\right)\right) \\
\label{e:AX7}& =  ((X- Y)- f(Z)) \cup \bigcup_{j<n}s_j(f(\rng{h_j}))\\
\label{e:AX7.5}& =  ((X- Y)- f(Z)) \cup \bigcup_{j<n}r_j(q_j(p_j(f(\rng{h_j}))))\\
\label{e:AX7.6}& =  ((X- Y)- f(Z)) \cup \bigcup_{j<n}r_j(q_j(f(\rng{h_j}))))\\
\label{e:AX7.7}& =  ((X- Y)- f(Z)) \cup \bigcup_{j<n}r_j(f^{-1}(f(\rng{h_j}))))\\
\label{e:AX7.8}& =  ((X- Y)- f(Z)) \cup \bigcup_{j<n}r_j(\rng{h_j})\\
\label{e:AX7.9}& =  ((X- Y)- f(Z)) \cup \bigcup_{j<n}\rng{h_j}\\
\label{e:AX9}& =  ((X- Y)- f(Z)) \cup Z 
\end{align}
Here (\ref{e:AX5}) follows by (\ref{e:jfix}), as does (\ref{e:AX7}).  Equations (\ref{e:AX7.6}, \ref{e:AX7.7}, and \ref{e:AX7.8}) follow respectively from (\ref{e:pj1}, \ref{e:qj1}, and~\ref{e:rj1}).

Thus $Z\subset \pi(X- Y)\subset \pi(X)\in [X]$ by (\ref{e:jeqclass});  moreover, (\ref{e:jfix}) implies (\ref{e:Exfixes}).
\end{proof}

\begin{lem}
\label{p:ExttoS}
Suppose there is a $Y\neq X$ with $Y\in [X]%_{E(\boxminus)}$
$, and let $Z\subset M- (X\cup Y)$ with $Z \stackrel{f}{\preceq} X\cap Y$.  Suppose further that one of the following obtains:
\begin{align}
\label{e:EtS1}|Z|&\trianglelefteq |X- Y|\\
\label{e:EtS2} |Z|&\trianglelefteq |Y- X|
\end{align}

Then there is a permutation $\pi\in \mathcal M$ such that $[\pi X] = [X]$, $Z\subset \pi(X\cap Y)$, and satisfying (\ref{e:Exfixes}).
\end{lem}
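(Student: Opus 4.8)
The plan is to run the three-permutation shuttle of Lemma~\ref{p:AtoX} again, changing only which of the four regions $X\cap Y$, $X-Y$, $Y-X$, $M-(X\cup Y)$ plays which role. Now $Z$ sits in the exterior $M-(X\cup Y)$ and $f(Z)$ sits in the core $X\cap Y$, and the reservoir $W$ is taken to be $X-Y$ when (\ref{e:EtS1}) holds and $Y-X$ when (\ref{e:EtS2}) holds. First I would use $|Z|\trianglelefteq|W|$ to fix $n\in\mathbb N$ and an injection $Z\stackrel{g}{\preceq}n\times W$ in $\mathcal M$, and extract for each $j<n$ the partial injection $h_j:W\rightarrow Z$, $h_j(w)=g^{-1}(j,w)$, whose domain lies in $W$ and range in $Z$. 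Because the three blocks $\rng{h_j}\subset Z$, $f(\rng{h_j})\subset X\cap Y$, and $\dom{h_j}\subset W$ are pairwise disjoint, I may define $p_j,q_j,r_j$ by exactly the formulas (\ref{e:pj1})--(\ref{e:rj1}), obtaining well-defined bijections in $\mathcal M$ that switch, respectively, $\rng{h_j}$ with $\dom{h_j}$, $\rng{h_j}$ with $f(\rng{h_j})$, and $f(\rng{h_j})$ with $\dom{h_j}$.

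The main obstacle is the bookkeeping that shows each of $p_j,q_j,r_j$ fixes a member of $[X]$, so that three applications of Proposition~\ref{p:fixes} give $s_j(X)\in[X]$ for $s_j=p_j\circ q_j\circ r_j$, and hence $[\pi X]=[X]$ for $\pi=s_0\circ\dots\circ s_{n-1}$. This is the only place the new region assignments genuinely matter. Since $Z\subset M-(X\cup Y)$ lies in both $M-X$ and $M-Y$, the reservoir decides which of $X$ or $Y$ is preserved by $p_j$: if $W=X-Y\subset M-Y$ then $p_j$ moves only points of $M-Y$ and so fixes $Y$, while if $W=Y-X\subset M-X$ then $p_j$ fixes $X$. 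Taking the first case, one checks as in Lemma~\ref{p:AtoX} that $q_j$ fixes $p_j(X)$---both $\rng{h_j}$ and $f(\rng{h_j})$ lie inside $p_j(X)=(X-\dom{h_j})\cup\rng{h_j}$---and that $r_j$ fixes $q_j(p_j(Y))$, since both $f(\rng{h_j})$ and $\dom{h_j}$ are disjoint from it; the second case is the mirror image under exchanging $X$ and $Y$. As each of the three factors then preserves the class $[X]$ setwise, the order of composition is immaterial and $\pi(X)\in[X]$ follows.

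It then remains only to transcribe the bookkeeping of (\ref{e:sfixes})--(\ref{e:jfix}) and (\ref{e:AX1})--(\ref{e:AX9}). The computation (\ref{e:sfixes}) goes through unchanged to show that $s_j$ fixes every $w\in\dom{h_j}$, whence $s_j(x)\neq x\Leftrightarrow x\in\rng{h_j}\cup f(\rng{h_j})$; summing over $j<n$ and using $Z=\bigcup_{j<n}\rng{h_j}$ gives (\ref{e:Exfixes}), that is, $\pi(x)\neq x\Leftrightarrow x\in Z\cup f(Z)$. The same chain of equalities as (\ref{e:AX1})--(\ref{e:AX9}), now carried out on $X\cap Y$ rather than $X-Y$ (since here $f(Z)\subset X\cap Y$), yields $\pi(X\cap Y)=((X\cap Y)-f(Z))\cup Z$, so that $Z\subset\pi(X\cap Y)$, completing the proof. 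The reservoir points of $W$ are borrowed only as scratch space and handed back fixed, which is precisely what lets each single transposition be arranged to fix a whole representative of $[X]$, even though the bare swap $q_j$ of $\rng{h_j}$ with $f(\rng{h_j})$ fixes neither $X$ nor $Y$ on its own.
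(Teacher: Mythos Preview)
Your proposal is correct and is precisely the adaptation the paper has in mind: the paper omits the proof entirely, saying only that ``the idea for the proof of Lemma~\ref{p:ExttoS} is essentially the same as that of Lemma~\ref{p:AtoX},'' and you have carried out that adaptation in full, correctly identifying the region swap (now $Z\subset M-(X\cup Y)$, $f(Z)\subset X\cap Y$, reservoir $W\in\{X-Y,\,Y-X\}$) and verifying the fixation claims for $p_j,q_j,r_j$ in each case. Your observation that each of the three factors preserves the class $[X]$ setwise---and hence so does any composition of them across all $j$---is in fact a cleaner justification of $[\pi X]=[X]$ than the paper makes explicit in the proof of Lemma~\ref{p:AtoX}.
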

As the idea for the proof of Lemma~\ref{p:ExttoS} is essentially the same as that of Lemma~\ref{p:AtoX}, we omit it.  

Now for two easy lemmata:
\begin{lem}\label{p:ExtoX}
Suppose $Z\subset M- (X\cup Y)$, $Z\stackrel{f}{\preceq}X- Y$ with $Y\in [X]%_{E(\boxminus)}
$ and $Y\neq X$.  Then there is a permutation $\pi\in \mathcal M$ such that $[\pi X] = [X]$, $Z\subset \pi(X- Y)$, satisfying (\ref{e:Exfixes}). 
\end{lem}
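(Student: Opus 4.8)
The plan is to take for $\pi$ the permutation induced by $f$, and then verify the three conclusions directly; the only genuine work is a short set-theoretic computation, and the lemma is ``easy'' precisely because a single swap suffices where Lemmata~\ref{p:AtoX} and~\ref{p:ExttoS} required shuttling through auxiliary permutations.

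First I would observe that since $Z\subset M-(X\cup Y)$ while $f(Z)\subseteq X-Y\subseteq X$, the sets $Z$ and $f(Z)$ are disjoint, so that $f$ restricts to a bijection $Z\to f(Z)$ between disjoint sets and $\iota(f)\in\mathcal M$ is a well-defined permutation of $M$. Set $\pi=\iota(f)$. By the definition of $\iota(f)$ we have $\pi(x)\neq x$ exactly when $x\in Z\cup f(Z)$, which is precisely (\ref{e:Exfixes}); this disposes of one of the three requirements outright.

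Next I would establish $[\pi X]=[X]$ by way of Proposition~\ref{p:fixes}. The key point is that $\pi$ fixes $Y$: the only points $\pi$ moves lie in $Z\cup f(Z)$, and both $Z$ (disjoint from $X\cup Y$) and $f(Z)$ (contained in $X-Y$, hence disjoint from $Y$) miss $Y$ entirely. Since $Y\in[X]$ we have $X\in[Y]$, so applying Proposition~\ref{p:fixes} with $Y$ in the role of the fixed concept yields $\pi(X)\in[Y]=[X]$, as required.

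Finally I would verify $Z\subseteq\pi(X-Y)$. Because $Z$ is disjoint from $X$, the only points of $X-Y$ that $\pi$ moves are those in $f(Z)$, and $\pi$ carries each $w\in f(Z)$ to $f^{-1}(w)\in Z$ while fixing the rest, so
\[
\pi(X-Y)=\bigl((X-Y)-f(Z)\bigr)\cup Z\supseteq Z.
\]
I do not expect any real obstacle here; the one point demanding care is checking that $\pi$ fixes $Y$, since it is this fact that licenses the appeal to Proposition~\ref{p:fixes}, and it rests entirely on the hypothesis $Z\subset M-(X\cup Y)$ keeping $Z$ and its $f$-image clear of $Y$.
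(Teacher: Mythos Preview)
Your proof is correct and is exactly the approach the paper takes: the paper's proof consists of the single sentence ``note that $\pi=\iota(f\upharpoonright Z)$ fixes $Y$, so $[X]=[\pi(X)]$ by Proposition~\ref{p:fixes}; the rest is obvious and routine,'' and you have simply filled in the routine details.
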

\begin{lem}\label{p:AtoS}
Suppose $Z\subset X- Y$, $Z\stackrel{f}{\preceq}X\cap Y$ with $Y\in [X]%_{E(\boxminus)}
$ and $Y\neq X$.  Then there is a permutation $\pi\in \mathcal M$ such that $[\pi X] = [X]$, $Z\subset \pi(X\cap Y)$, and satisfying (\ref{e:Exfixes}).
\end{lem}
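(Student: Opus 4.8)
The plan is to let $\pi = \iota(f)$ be the permutation induced by the bijection $f\colon Z \to f(Z)$ and to read off the three conclusions directly from its definition. First I would check that $\iota(f)$ is legitimate: since $Z \subseteq X - Y$ and $f(Z) \subseteq X \cap Y$, and the regions $X - Y$ and $X \cap Y$ are disjoint, the domain $Z$ and range $f(Z)$ of $f$ are disjoint. Hence $f$ is a bijection between disjoint sets, so $\iota(f)$ is a well-defined permutation lying in $\mathcal{M}$ by the remark following the definition of the induced permutation.

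The reason this lemma is easy---in contrast to Lemma~\ref{p:AtoX}, where the analogous swap genuinely changes $X$ and must be corrected by a sequence of auxiliary permutations---is that here the entire swap takes place inside $X$. Both $Z$ and $f(Z)$ are subsets of $X$, so $\pi$ sends each point of $Z$ to a point of $f(Z) \subseteq X$, sends each point of $f(Z)$ back to a point of $Z \subseteq X$, and fixes everything else. Consequently $\pi(X) = X$ as a set, and the first conclusion $[\pi X] = [X]$ is immediate. In particular, neither Proposition~\ref{p:fixes} nor the hypothesis $Y \in [X]$ is actually needed to secure it; those hypotheses enter only to guarantee that the regions $X - Y$ and $X \cap Y$ are available and to keep the statement parallel to its companion lemmata.

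For the second conclusion I would compute the image $\pi(X \cap Y)$. Since $\pi$ moves only the points of $Z \cup f(Z)$, and $Z$ is disjoint from $X \cap Y$ while $f(Z) \subseteq X \cap Y$, the permutation fixes $X \cap Y$ outside $f(Z)$ and carries $f(Z)$ onto $Z$. Thus $\pi(X \cap Y) = ((X \cap Y) - f(Z)) \cup Z$, which plainly contains $Z$, giving $Z \subseteq \pi(X \cap Y)$. Finally, equation (\ref{e:Exfixes}) is read straight off the definition of $\iota(f)$: its only non-fixed points are exactly those of $Z \cup f(Z)$.

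There is no genuine obstacle in this argument; its whole content is the observation that confining both $Z$ and $f(Z)$ to $X$ forces $\iota(f)$ to fix $X$ outright, so the shuttling machinery of Lemma~\ref{p:AtoX} collapses to a single induced swap. The only point requiring care is the disjointness check that makes $\iota(f)$ well-defined in the first place.
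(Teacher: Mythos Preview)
Your proposal is correct and takes essentially the same approach as the paper: set $\pi = \iota(f)$ (the paper writes $\iota(f\upharpoonright Z)$, which is the same map) and read off the conclusions. Your observation that $\pi(X) = X$ outright---so that $[\pi X] = [X]$ is immediate without invoking Proposition~\ref{p:fixes}---is a slight sharpening of the paper's ``proceed similarly'' gesture toward the argument for Lemma~\ref{p:ExtoX}, but the underlying construction is identical.
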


\begin{proof}[Proof of Lemmata~\ref{p:ExtoX} and~\ref{p:AtoS}]
For Proposition~\ref{p:ExtoX}, note that $\pi = \iota(f\upharpoonright Z)$ fixes $Y$, so $[X] = [\pi(X)]$ by Proposition~\ref{p:fixes}.  The rest is obvious and routine.

For Lemma~\ref{p:AtoS}, proceed similarly but set $\pi = \iota(f\upharpoonright Z)$.  
\end{proof}

\subsection{Almost complementarity and symmetry}\label{ssec:almC}

The results of the previous section showed what functions are needed in order to, speaking loosely, ``transform one set into another'', while staying in the same equivalence class.  In this section we define properties that determine sufficient conditions for the application of those results.  In particular, we establish Lemma~\ref{lem:class}, which says that as long as these properties obtain for a set of a given cardinality, then all sets of that cardinality are $E$-equivalent.

We begin, then, by defining these properties.

\begin{defn} We say that sets $X,Y$ are \emph{almost complementary in $\mathcal M$} if $|X|=|Y|$ and \emph{none} of the following is satisfied in $\mathcal M$:
\begin{align}
\label{e:almostcapY}
|Y- X|&\trianglelefteq|X \cap Y| \\
\label{e:almostcapX}
|X- Y|&\trianglelefteq|X \cap Y| \\\label{e:almostextY}
|X- Y|&\trianglelefteq |M- (X \cup Y)| \\
\label{e:almostextX}
|Y- X|&\trianglelefteq |M- (X \cup Y)| 
\end{align}

Similarly, we say that $X,Y$ are \emph{symmetric in $\mathcal M$} if $|X|=|Y|$ and for some $Z\in \mathcal M$ distinct from $X$ and $Y$,  $|Z|=|X|$ and neither of the following is satisfied for $\mathcal M$:
%\marginpar{Fixed ``if\ldots then'' to ``and''}
\begin{align}
\label{e:zsymm1}
|Z- (X\cup Y)| &\trianglelefteq |X- Y| \\
\label{e:zsymm2}
|Z- (X\cup Y)| &\trianglelefteq |Y- X|
\end{align}
\end{defn}
%(As with many other locutions, we'll drop ``for $\mathcal M$'' where it is obvious.)

In many cases, we will establish that either (\ref{e:zsymm1}) or (\ref{e:zsymm2}) holds in $\mathcal M$ by establishing that one of the following holds for $\mathcal M$:
\begin{align}
\label{e:symm1}
|M- (X \cup Y)|  & \trianglelefteq |X- Y| \\
\label{e:symm2}
|M- (X \cup Y)|  & \trianglelefteq |Y- X| 
\end{align}
This is licensed by (\ref{e:nonAtrans}).

The following two propositions follow by (\ref{e:nonAsublp}) and (\ref{e:nonAsubrp}) and \ref{cc}.

\begin{prop}  \label{p:permpres} Suppose $\pi:M\rightarrow M$ is a permutation.  Then:
\begin{enumerate}
\item $X,Y$ are almost complementary if and only if $\pi(X), \pi(Y)$ are almost complementary.

\item $X,Y$ are symmetric if and only if $\pi(X), \pi(Y)$ are symmetric.
\end{enumerate}
\end{prop}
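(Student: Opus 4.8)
The plan is to reduce both biconditionals to two elementary facts about a permutation $\pi$ of $M$: it preserves $\mathcal M$-cardinality and it commutes with the Boolean operations occurring in the two definitions. Everything else is bookkeeping routed through the substitution properties already in hand.

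First I would note that since $\pi\in\mathcal M$ is a bijection of $M$, for each $A\in\mathcal M$ the restriction $\pi\upharpoonright A$ witnesses $A\approx\pi(A)$, so $\mathcal M\models|A|=|\pi(A)|$. Because $\pi$ maps $M$ bijectively onto itself, it also commutes with the relevant Boolean terms: $\pi(X\cap Y)=\pi(X)\cap\pi(Y)$, $\pi(X- Y)=\pi(X)-\pi(Y)$, $\pi(Y- X)=\pi(Y)-\pi(X)$, and $\pi(M-(X\cup Y))=M-(\pi(X)\cup\pi(Y))$. Combining the two observations yields $\mathcal M\models|B(X,Y)|=|B(\pi(X),\pi(Y))|$ for each Boolean term $B$ occurring in the definitions, and in particular $|X|=|Y|\Leftrightarrow|\pi(X)|=|\pi(Y)|$.

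For part (1), I would first rewrite almost complementarity without the negations. Since each of (\ref{e:almostcapY})--(\ref{e:almostextX}) is a $\trianglelefteq$-statement, comparability (\ref{e:nonAcomp}), itself a consequence of \ref{cc}, lets me replace its failure by the strict reverse inequality; thus $X,Y$ are almost complementary iff $|X|=|Y|$ together with $|X\cap Y|\vartriangleleft|Y- X|$, $|X\cap Y|\vartriangleleft|X- Y|$, $|M-(X\cup Y)|\vartriangleleft|X- Y|$, and $|M-(X\cup Y)|\vartriangleleft|Y- X|$. Each such strict inequality is insensitive to replacing either side by an $\mathcal M$-equinumerous set: this is exactly what (\ref{e:nonAsublp}) and (\ref{e:nonAsubrp}) deliver (take the spare summand to be $\emptyset$, then substitute on the left and on the right in turn). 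Feeding in the cardinality identities of the previous paragraph, each of these four strict inequalities holds of $(X,Y)$ iff it holds of $(\pi(X),\pi(Y))$; since $|X|=|Y|$ is likewise preserved, part (1) follows.

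Part (2) runs along the same lines, the only additional point being that the witness must be carried across by $\pi$. If $Z\neq X,Y$ with $|Z|=|X|$ witnesses the symmetry of $X,Y$, then $\pi(Z)$ satisfies $|\pi(Z)|=|\pi(X)|$, is distinct from $\pi(X)$ and $\pi(Y)$ by injectivity of $\pi$, and---using $\pi(Z-(X\cup Y))=\pi(Z)-(\pi(X)\cup\pi(Y))$ and the same rewriting of failures as strict inequalities---fails (\ref{e:zsymm1}) and (\ref{e:zsymm2}) relative to $(\pi(X),\pi(Y))$ precisely because $Z$ failed them relative to $(X,Y)$; hence $\pi(Z)$ witnesses the symmetry of $\pi(X),\pi(Y)$. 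Both biconditionals are then closed by running the argument a second time with $\pi^{-1}$ in place of $\pi$. I do not anticipate a genuine obstacle: the one delicate point is that $\trianglelefteq$ and $\vartriangleleft$ are metatheoretic relations, so the transfer cannot simply be asserted from cardinal arithmetic but must pass through the substitution lemmas (\ref{e:nonAsublp}) and (\ref{e:nonAsubrp}), which is exactly why they are cited.
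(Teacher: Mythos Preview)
Your proposal is correct and matches the paper's own approach: the paper merely states that the proposition follows from (\ref{e:nonAsublp}), (\ref{e:nonAsubrp}), and \ref{cc}, and your argument is exactly an unpacking of that remark---using comparability to flip the negated $\trianglelefteq$-statements into strict inequalities and then invoking the substitution lemmas with an empty summand to transfer each side along the equinumerosities induced by $\pi$. Your handling of the witness $Z$ in part~(2) and the closing with $\pi^{-1}$ are the expected bookkeeping.
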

%\begin{proof}  
%Since $\pi$ is a bijection, 
%\begin{align} |X- Y| & = |\pi(X- Y)| = |\pi(X)- \pi(Y)| \\
%|Y- X| & = |\pi(Y- X)| = |\pi(Y)- \pi(X)| \\ 
%|X\cap Y| & = |\pi(X\cap Y)|  = |\pi(X) \cap \pi(Y)| \end{align}
%and 
%\begin{equation}
%|M-(X\cup Y)|  = |\pi(M- (X\cup Y))|  = |M- (\pi(X)\cup \pi(Y))| 
%\end{equation}
 %\end{proof}

\begin{prop} Distinct sets $X$ and $Y$ cannot be both almost complementary and symmetric.  
\end{prop}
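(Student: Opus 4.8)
The plan is to argue by contradiction: suppose the distinct sets $X$ and $Y$ were simultaneously almost complementary and symmetric. The strategy is to repackage the two \emph{negative} definitions as \emph{positive} $\trianglelefteq$-inequalities using comparability~(\ref{e:nonAcomp}), and then to chain these through a single trivial containment so as to contradict one of the four clauses forbidden by almost complementarity.

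First I would unpack symmetry. By hypothesis there is a witness $Z$, distinct from $X$ and $Y$ with $|Z|=|X|$, for which neither (\ref{e:zsymm1}) nor (\ref{e:zsymm2}) is satisfied; in particular $\neg(|Z-(X\cup Y)|\trianglelefteq|X-Y|)$. Applying comparability~(\ref{e:nonAcomp}) to the pair $Z-(X\cup Y)$ and $X-Y$, the failure of one direction forces the other, so $|X-Y|\trianglelefteq|Z-(X\cup Y)|$.

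The crux is then a single containment. Since $Z-(X\cup Y)\subseteq M-(X\cup Y)$, taking $n=1$ in the definition of $\trianglelefteq$ gives $|Z-(X\cup Y)|\trianglelefteq|M-(X\cup Y)|$. Composing this with the previous inequality by transitivity~(\ref{e:nonAtrans}) yields $|X-Y|\trianglelefteq|M-(X\cup Y)|$. But this is exactly clause~(\ref{e:almostextY}), which is among those forbidden when $X$ and $Y$ are almost complementary --- the desired contradiction.

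The argument is genuinely short, so the ``main obstacle'' is conceptual rather than computational: one must notice that the symmetry condition, though phrased in terms of the auxiliary witness $Z$, can be transferred to a statement purely about $M-(X\cup Y)$ via the trivial subconcept containment $Z-(X\cup Y)\subseteq M-(X\cup Y)$, at which point it collides head-on with the almost-complementarity clause~(\ref{e:almostextY}). I would also note that only one of the two symmetry clauses and one of the four almost-complementarity clauses are actually used; running the same argument from (\ref{e:zsymm2}) and (\ref{e:almostextX}) works by the identical mechanism, so either pairing suffices.
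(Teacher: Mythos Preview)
Your proof is correct. The paper does not give an explicit argument for this proposition, merely noting that it (together with Proposition~\ref{p:permpres}) ``follow[s] by (\ref{e:nonAsublp}) and (\ref{e:nonAsubrp}) and \ref{cc}''; your route via (\ref{e:nonAcomp}) and (\ref{e:nonAtrans}) together with the trivial containment $Z-(X\cup Y)\subseteq M-(X\cup Y)$ is the natural explicit unpacking, and arguably cleaner than invoking the substitution rules the paper cites.
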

%\begin{proof}  Suppose $X,Y$ are distinct and almost complementary, so by \ref{cc},
%\begin{equation} n\times |M- (X\cup Y)|, n\times |X\cap Y| < |Y- X|, |X- Y| \end{equation}
%for all $n\in \mathbb{N}$.  Obviously then $n\times |M- (X\cup Y)| < |X- Y|, |Y- X|$ for all $n\in \mathbb{N}$, so again by \ref{cc} $X,Y$ are not symmetric. 
%\end{proof}

\begin{defn}
In what follows, distinct, $E$-equivalent sets that satisfy one of (\ref{e:almostcapY}--\ref{e:almostextX}) and one of (\ref{e:zsymm1}--\ref{e:zsymm2}) will play a special r\^ole.  Thus we say that sets $X,Y$ are \emph{opportune} if they are distinct, $Y\in [X]$, and they are neither almost complementary nor symmetric.  We moreover will say that a single set $X$ is opportune if there is a $Y$ such that the pair $X,Y$ is opportune.  
\end{defn}

\begin{defn}\label{def:relfin}
Two sets $X, Y$ are \emph{relatively finite in $\mathcal M$} just if $|X- Y| = |Y- X|$ in $\mathcal M$.  (Note that if $X,Y$ are relatively finite, then $X\boxminus Y$.)  We set $RF(X)$ to be the set of sets $Y$ that are relatively finite in $\mathcal M$ to $X$.  
\end{defn}

\begin{lem} \label{lem:class}  If $X,Y$ are opportune %with $|X- Y|\geq |Y- X|$,  
then $RF(X)\subseteq \xkl$.  That is, every set relatively finite to $X$ in $\mathcal M$ is $E$-equivalent to $X$.
\end{lem}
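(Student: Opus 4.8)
The plan is to fix $X$ throughout and prove $E(X,W)$ for an arbitrary $W\in RF(X)$; the clause $W\boxminus X$ that membership in $\xkl$ also requires comes for free from relative finiteness (the remark after Definition~\ref{def:relfin}), so the whole task is to land $W$ in $[X]_E$. The lever I would use is Proposition~\ref{p:fixes}: it suffices to manufacture a set $Y^{*}\in[X]_E$ and a permutation $\rho\in\mathcal M$ fixing $X$ with $\rho(Y^{*})=W$, for then $W=\rho(Y^{*})\in[\rho(X)]_E=[X]_E$. Such a class-preserving $\rho$ exists precisely when $Y^{*}$ and $W$ are congruent over the four regions cut by $X$, namely $|X\cap Y^{*}|=|X\cap W|$, $|X- Y^{*}|=|X- W|$, $|Y^{*}- X|=|W- X|$ and $|M-(X\cup Y^{*})|=|M-(X\cup W)|$, since then block-wise bijections inside $X$ and inside $M- X$ assemble (by comprehension) into the desired $\rho$. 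So everything reduces to realising, inside $[X]_E$, a set whose region profile over $X$ matches that of $W$.

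Because $W\in RF(X)$ its profile satisfies $|X- W|=|W- X|=\kappa$ for some $\kappa\le\min(|X|,|M- X|)$, so the $Y^{*}$ I must build is itself relatively finite to $X$ with difference-size $\kappa$. In the generic case, where $\kappa$ is strictly below $|X|$ and $|M- X|$ or the relevant region is finite in $\mathcal M$, \ref{ism} and a finite/infinite case split will force the shared-part and exterior matches automatically once $|X- Y^{*}|=|Y^{*}- X|=\kappa$ is arranged, so I would only need to hit the difference-size; the extremal cases $\kappa=|X|$ or $\kappa=|M- X|$ are the ones where $|X\cap Y^{*}|$ and $|M-(X\cup Y^{*})|$ must be matched to $W$ by hand.

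To construct $Y^{*}$ I would start from the opportune witness $Y$ and run it through the shuttle lemmata of the previous subsection, each of which returns a fresh member of $[X]_E$ got by swapping a chunk $Z$ between two regions and fixing everything else (equation~(\ref{e:Exfixes})). The purpose of opportuneness is exactly to supply the side conditions these lemmata require: ``$X,Y$ not almost complementary'' means one of (\ref{e:almostcapY})--(\ref{e:almostextX}) holds, a $\trianglelefteq$-domination of an asymmetric difference by $X\cap Y$ or by $M-(X\cup Y)$, which is hypothesis (\ref{e:AXS})/(\ref{e:AXExt}) of Lemma~\ref{p:AtoX}; while ``$X,Y$ not symmetric'' delivers, for the witnessing set, a domination (\ref{e:symm1})/(\ref{e:symm2}) of the exterior by an asymmetric difference, which by (\ref{e:nonAtrans}) feeds hypotheses (\ref{e:EtS1})/(\ref{e:EtS2}) of Lemma~\ref{p:ExttoS}; Lemmata~\ref{p:ExtoX} and~\ref{p:AtoS} ask for nothing extra. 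These four moves let me transfer mass in all directions among the regions, so after finitely many (metatheoretic) steps I can grow and balance the two asymmetric differences to the target $\kappa$ and, in the extremal cases, trim $X\cap Y^{*}$ and $M-(X\cup Y^{*})$ with further applications of Lemmata~\ref{p:AtoS} and~\ref{p:ExttoS}. The size arithmetic driving all of this is what Proposition~\ref{p:nonA} (with \ref{cc} and \ref{ism}) was proved to license, and Proposition~\ref{p:permpres} keeps the opportune status intact as the configuration is moved.

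The hard part will be the bookkeeping of region sizes along this chain. Since $\mathcal M$-finiteness need not be metatheoretic finiteness, sizes are only $\trianglelefteq$-comparable, not $\leq$-comparable, so each intended transfer must be checked admissible via Proposition~\ref{p:nonA}, and --- because every shuttle lemma already folds an $n$-fold ($n\in\mathbb N$) domination into one permutation --- the whole manufacture of $Y^{*}$ has to be arranged to finish in boundedly many steps. The extremal cases $\kappa=|X|$ and $\kappa=|M- X|$, where \ref{ism} no longer hands me the shared-part and exterior matches for free, are where this accounting is most delicate and form the technical heart of the proof.
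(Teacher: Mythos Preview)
Your strategy---manufacture a $Y^{*}\in[X]_E$ whose four-region profile over the fixed $X$ matches that of $W$, then conjugate by an $X$-fixing permutation---is a genuinely different route from the paper's, and the detour through profile-matching is what generates your ``extremal cases'' and the delicate bookkeeping you flag. The paper avoids all of that by working in the other direction: rather than fixing $X$ and moulding some $Y^{*}$ to the shape of $W$, it \emph{moves $X$ onto $W$} (there called $Z$) by a single composition of four shuttle permutations. The organising device is a bijection $f:Z\to X$ fixing $Z\cap X$ pointwise, which induces a five-piece partition of $Z$ according to (i) whether a point of $Z$ lies in $Y- X$ or in $M-(X\cup Y)$ and (ii) whether $f$ sends it into $X- Y$ or into $X\cap Y$; the fifth piece is $Z\cap X$ itself. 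Each of the four ``off-diagonal'' pieces is then absorbed into the running image of $X$ by exactly one of Lemmata~\ref{p:AtoX}, \ref{p:ExtoX}, \ref{p:AtoS}, \ref{p:ExttoS}, with non-almost-complementarity supplying the side condition for the first step and non-symmetry (carried along by Proposition~\ref{p:permpres}) for the last. Because each shuttle satisfies~(\ref{e:Exfixes}), the pieces already placed stay put, and after four steps $\pi(X)=Z$ with $[\pi(X)]=[X]$.

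The practical difference: the paper's argument is uniform---no case split on whether $\kappa$ hits $|X|$ or $|M- X|$, no need to match $|X\cap Y^{*}|$ or the exterior by hand---because the bijection $f$ already carries the exact size information, so there is nothing to ``grow and balance''. Your approach could likely be pushed through, but note that the shuttle lemmata output permutations, so applying them to the pair $(X,Y)$ \emph{preserves} the four region sizes of the pair; to change the profile of a candidate $Y^{*}$ relative to the \emph{original} $X$ you would have to track how $\pi(X)$ (not $\pi(Y)$) sits over $X$, and your proposal does not make this explicit. The paper's partition-of-the-target idea sidesteps that tracking entirely.
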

\begin{proof}  Let $Z\in RF(X)$ 
 %\marginpar{shorter statement of assump.}   
 with $Z\stackrel{f}\approx X$ in $\mathcal M$; we may assume that $f$ fixes $Z\cap X$ pointwise.  We show that there is a permutation $\pi$ of $M$ such that $\pi(X) = Z$ and $[\pi(X)] = [X]$.

First we partition $Z$:
\begin{align*} %\label{e:Z_i}
Z_1 & = f^{-1}(X- Y) \cap Y- X & Z_2 & = f^{-1}(X- Y) - Y \\
Z_3 & = f^{-1}(X\cap Y) \cap Y- X & Z_4 & = f^{-1}(X \cap Y) - Y \\
Z_5 & = f^{-1}(X) \cap X = X \cap Z 
\end{align*} 
where the equality in $Z_5$ is due to the choice of $f$.  It is easy to see that these sets are pairwise disjoint and that their union is $Z$.

Note that $Z_1\stackrel{f\upharpoonright Z_1}\preceq X- Y$.  Since $X$ and $Y$ are not almost complementary,  at least one these holds: 
\begin{align}
%\label{e:LAXS}
|Z_1|&\trianglelefteq|X\cap Y| &
\label{e:LAXExt}
|Z_1|&\trianglelefteq|M- (X\cup Y)|
\end{align}
Either way Lemma~\ref{p:AtoX} provides a permutation $\pi_1\in \mathcal M$ such that 
$[\pi_1 X]=[X]$ and $Z_1\subset \pi_1(X- Y)\subset \pi_1(X)$ and satisfying (\ref{e:Exfixes}).  Note that by (\ref{e:Exfixes}), $Z_5 \subset \pi_1(X)$, so 
\begin{equation}
\label{e:Z1Z5} Z_1\cup Z_5\subset \pi(X)
\end{equation}
Set $X_1 = \pi_1(X), Y_1= \pi_1(Y)$.    

Then since $Z_2\subset M- (X_1\cup Y_1)$ (by (\ref{e:Exfixes})), and since (by (\ref{e:Exfixes}) again) $Z_2\stackrel{f\upharpoonright Z_2}\preceq X_1- Y_1$, by Proposition~\ref{p:ExtoX} there is a permutation $\pi_2\in \mathcal M$ with $Z_2 \subset \pi_2(X_1- Y_1) \subset \pi_2(X_1) \in [X]$ and satisfying (\ref{e:Exfixes}).  By (\ref{e:Z1Z5}) and (\ref{e:Exfixes}), we have 
\begin{equation}
\label{e:Z2}
Z_1\cup Z_2 \cup Z_5 \subset \pi_2(X_1) \in [X]
\end{equation}

Set $X_2 = \pi_2(X_1), Y_2= \pi_2(Y_1)$.  Since $Z_3\subset Y_2- X_2$  and  $Z_3 \stackrel{f\upharpoonright Z_3}\preceq X_2\cap Y_2$ (by (\ref{e:Exfixes}) twice), there is by Lemma~\ref{p:AtoS} a permutation $\pi_3\in \mathcal M$ satisfying (\ref{e:Exfixes}) and with 
$Z\subset \pi_3(X_2 - Y_2) \subset \pi_3(X_2) \in [X]$.  By (\ref{e:Z2}) and (\ref{e:Exfixes}) we have that 
\begin{equation}
\label{e:Z3}
Z_1\cup Z_2\cup Z_3\cup Z_5 \subset \pi_3(X_2)
\end{equation}

Finally set $X_3 = \pi_3(X_2), Y_3 = \pi_3(Y_2)$.  Note that $Z_4 \subset M- (X_3\cup Y_3)$ and  that $Z_4 \stackrel{f\upharpoonright Z_4}\preceq X_3\cap Y_3$ (by (\ref{e:Exfixes}) twice).  Notice now  that
%\marginpar{eliminated repeated ``that''} 
since $\pi_1, \pi_2$, and $\pi_3\in \mathcal M$ are permutations and since $X,Y$ are not symmetric, $X_3, Y_3$ are not symmetric by Proposition~\ref{p:permpres}.  Thus  we have at least one of 
%\marginpar{added ``at least\ldots"}
\begin{align}
\label{e:LEtS1} |Z_4| &\trianglelefteq |X_3- Y_3| & |Z_4| &\trianglelefteq |Y_3- X_3| 
\end{align}
%\marginpar{moved two eq. to one line}
obtains.
%\marginpar{added ``obtains''}
So by Lemma~\ref{p:ExttoS}, there is a permutation $\pi_4\in \mathcal M$ such that 
\begin{equation}
\label{e:Z_4inX}
Z_4 \subset \pi_4(X_3\cap Y_3) \subset \pi_4(X_3) \in [X]
\end{equation}  
By (\ref{e:Z3}) and again (\ref{e:Exfixes}) we have that 
\begin{equation}
Z= Z_1\cup Z_2\cup Z_3\cup Z_4 \cup Z_5 = \pi_4(X_3) = \pi_4(\pi_3(\pi_2(\pi_1(X)))) = \pi(X)
\end{equation}
for  $\pi = \pi_1 \circ \pi_2\circ\pi_3\circ \pi_4$; which was to be established.
\end{proof}

Dealing with opportune sets via Lemma~\ref{lem:class} will enable us to deal with cases in which $X$ is finite, both within and outside of $\mathcal M$.  We use a special class of opportune sets to deal with $X$ infinite in $\mathcal M$.

\begin{defn}
We will say that $X,Y$ are \emph{ideally opportune} if they are opportune and 
either $|X|\trianglelefteq |M- X| %\geq|X|
$ or $|Y|\trianglelefteq |M- Y| %\geq|Y|
$.  We will also say that the single concept $X$ is ideally opportune if there is a $Y$ such that $X,Y$ are opportune and $|X|\trianglelefteq |M- X|$.
\end{defn}

%Note that: \begin{prop}\label{p:relfin} If $\mathcal M\models |X|=|Y|<\omega$, then $X, Y$ are relatively finite.\end{prop}
%\begin{proof}  If $|X- Y|\neq |Y- X|$ then by Cardinal Comparability, either $|X- Y|<|Y- X|$ or $|Y- X|<|X- Y|$.  If the former, then \[  |X| = |(X\cap Y)\sqcup (X- Y)| < |(X\cap Y)\sqcup (Y- X)| = |Y|\] The other case is the same. \end{proof}

We now turn to proving another lemma that will address the more general case, in which $X$ is not finite in the metatheory (though it may be finite in $\mathcal M$).
We begin with another preparatory lemma.

\begin{lem}\label{lem:extend}
Suppose $X$ is ideally opportune, and $X \boxminus Z$.  Then $Z\in [X]$.
\end{lem}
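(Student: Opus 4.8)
\textbf{Proof plan for Lemma~\ref{lem:extend}.}

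The goal is to show that if $X$ is ideally opportune and $X \boxminus Z$, then $Z \in [X]_E$. Since $X$ is ideally opportune, there is a $Y$ such that $X,Y$ are opportune (hence distinct, $E$-equivalent, and neither almost complementary nor symmetric) and $|X| \trianglelefteq |M - X|$. The plan is to exhibit a permutation $\pi \in \mathcal M$ fixing the $E$-class of $X$ and carrying $X$ onto a set relatively finite to $Z$, so that Lemma~\ref{lem:class} can be brought to bear. The essential point is that $\trianglelefteq$-relations only let us shuttle ``few'' copies of a set at a time, so the ideal-opportuneness hypothesis $|X| \trianglelefteq |M-X|$ is what guarantees there is enough room in the complement $M - X$ to absorb a full copy of $X$.

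First I would use $X \boxminus Z$ to fix bijections witnessing $|X| = |Z|$ and $|M - X| = |M - Z|$; the strategy is to reduce to the relatively-finite case covered by Lemma~\ref{lem:class} by finding a permutation sending $X$ into a set $W$ with $W \in [X]_E$ and $W \in RF(Z)$ (equivalently $|W - Z| = |Z - W|$). Because $X$ is opportune, Lemma~\ref{lem:class} already gives $RF(X) \subseteq \xkl$; the work is to manufacture, using the permutation machinery of Lemmata~\ref{p:AtoX}, \ref{p:ExttoS}, \ref{p:ExtoX}, and~\ref{p:AtoS}, a permutation $\pi$ with $[\pi(X)] = [X]$ that positions $\pi(X)$ relatively finite to $Z$. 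Here the condition $|X| \trianglelefteq |M - X|$ is used to satisfy the side hypotheses (the $\trianglelefteq$-inequalities) demanded by those lemmata, since it bounds the size of the pieces of $X$ that must be relocated into the complementary region by the available room there.

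The key arithmetic steps would use Proposition~\ref{p:nonA}: from $|X| \trianglelefteq |M - X|$ and the equalities coming from $X \boxminus Z$, I would derive the specific $\trianglelefteq$-bounds (of the form~(\ref{e:AXS})--(\ref{e:AXExt}) or~(\ref{e:EtS1})--(\ref{e:EtS2})) needed so that the relevant transfer lemma applies to each partition-piece of $X$ relative to $Z$. Chaining the resulting finitely many permutations (each fixing the $E$-class by Proposition~\ref{p:fixes}) as in the proof of Lemma~\ref{lem:class} yields a single $\pi \in \mathcal M$ with $\pi(X) \in [X]$ and $\pi(X) \in RF(Z)$. Then $Z \in RF(\pi(X)) \subseteq [\pi(X)]_{E(\boxminus)} = \xkl$ by Lemma~\ref{lem:class} (applied at $\pi(X)$, which is opportune by Proposition~\ref{p:permpres}), giving $Z \in [X]_E$.

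The main obstacle will be verifying that the required $\trianglelefteq$-inequalities genuinely hold in the possibly non-Archimedean setting: the hypothesis $|X| \trianglelefteq |M - X|$ must be parlayed, via the additive and splitting properties~(\ref{e:nonAaddit}), (\ref{e:nonAsplit}), and~(\ref{e:nonAtrans}) of Proposition~\ref{p:nonA}, into the precise bounds each transfer lemma needs for \emph{every} piece of the partition simultaneously. Because $X$ may be finite in $\mathcal M$ yet infinite in the metatheory, one cannot appeal to ordinary cardinal arithmetic and must track the metatheoretic finiteness of the number of copies carefully throughout.
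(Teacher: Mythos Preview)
Your target is right: manufacture some $W\in[X]_E$ with $W\in RF(Z)$ and then invoke Lemma~\ref{lem:class} at $W$. But your route to $W$---rerunning the transfer machinery of Lemmata~\ref{p:AtoX}--\ref{p:AtoS} with $Z$ as the target---has a real gap. That machinery, as assembled in the proof of Lemma~\ref{lem:class}, depends on choosing the bijection $f:Z\to X$ to fix $Z\cap X$ pointwise, and this is available only when $|Z-X|=|X-Z|$, i.e.\ when $Z\in RF(X)$ already. For a general $Z\boxminus X$ this fails, and you have not said how to bypass it; the $\trianglelefteq$-bounds you hope to extract from $|X|\trianglelefteq|M-X|$ via Proposition~\ref{p:nonA} do not speak to this mismatch between $|Z-X|$ and $|X-Z|$.

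The paper's argument is both simpler and different in kind. It first disposes of the case $\mathcal M\models|X|<\omega$: finite cancellation then gives $|X-Z|=|Z-X|$ from $|X|=|Z|$, so $Z\in RF(X)$ and Lemma~\ref{lem:class} applies directly. For $|X|\geq\omega$, Proposition~\ref{p:triinf} collapses $\trianglelefteq$ to $\leq$, so the non-Archimedean bookkeeping you anticipate simply vanishes; one then uses \ref{ism} and Schr\"oder--Bernstein to show $|X|\leq|M-(X\cup Z)|$, and takes $W=f(X)$ for any injection $f:X\to M-(X\cup Z)$. Because $W$ is \emph{disjoint} from both $X$ and $Z$ and equinumerous with each, one gets $W\in RF(X)$ and $W\in RF(Z)$ for free---no partition-and-shuttle argument is needed. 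Two invocations of Lemma~\ref{lem:class} (first at $X$ to obtain $W\in[X]_E$, then at the now-opportune pair $W,X$ to obtain $Z\in[W]_E$) finish the proof. The idea you are missing is this disjoint-intermediary trick, which replaces the delicate $\trianglelefteq$-verification you envisage with a one-line cardinality estimate.
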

\begin{proof}
Note that if $|X|<\omega$ then $X$ and $Z$ are relatively finite, so %  by Proposition~\ref{p:relfin}, and then 
$Z\in [X]_{E(\boxminus)}$ by Lemma~\ref{lem:class}.  So we assume $|X|\geq\omega$.

Thus by Proposition~\ref{p:triinf}, and since $X$ is ideally opportune, we may assume that $|Z|=|X|\leq |M-X|=|M-Z|$.

Now if $|X|=|M-X|$, then  
\[ |M- Z|\geq |Z|=|X|=|M- X|=|M|\]
If $|Z- X|<|X- Z|$, then as $M- X = (M- (X\cup Z))\cup (Z- X)$, 
%\marginpar{corrected $\sqcup$ to \cup$}
by \ref{ism}, we  obtain
\begin{equation}
\label{e:M-XZ}
|X|\leq |M- (X\cup Z)|
\end{equation}  
Similarly for $|X- Z|<|Z- X|$.

On the other hand if  $|X|<|M- X|$, then as $|X|=|Z|$, $|X\cup Z|<|M- X|$ by \ref{ism}.  Then by \ref{ism} again, we have that 
\[|M- (X\cup Z)|=|M|\geq |M- X|>|X|\] which again yields (\ref{e:M-XZ}).  %The argument for $|Z|<|M- Z|$ is identical.  

Given (\ref{e:M-XZ}) with $f$ a witnessing injection, observe that $f(X)$ and $X$ are disjoint and so relatively finite.  By Lemma~\ref{lem:class}, since $X$ is one of a pair of opportune sets, $f(X)\in [X]_{E(\boxminus)}$.  But then $f(X), X$ are opportune as well.  Moreover, as $Z, f(X)$ are disjoint and equinumerous, they also are relatively finite.  So by Lemma~\ref{lem:class}, $Z\in [f(X)]_{E(\boxminus)} = [X]_{E(\boxminus)}$.
\end{proof}

\subsection{The intermediate Theorem~\ref{thm:main-0formal}} \label{sec:classif}

We are now nearly ready to prove Theorem~\ref{thm:main-0formal}. 
We first deal with degenerate cases, namely those in $E(\boxminus)_X$ that may be trivial \emph{and also} separative.  
The following propositions follow from \ref{ipm} and (\ref{e:nonApsplit}) and (\ref{e:nonAexp})  of Proposition~\ref{p:nonA}.

\begin{prop}
If \begin{equation}
\label{e:propvsmall}
\mathcal M\models |M|\leq 1 \vee |X|=0 \vee |X|=|M|< \omega
\end{equation}  
then $E(\boxminus)_X$ is both a separation and trivial.

Further, if $|M|=2$ and $|X|=1$, then $E(\boxminus)_X$ is either both trivial and a complementation, or it is a separation.
\end{prop}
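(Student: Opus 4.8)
The plan is to verify the two assertions of the proposition by direct appeal to the definitions of triviality, separativity, and complementativity, using the non-Archimedean arithmetic collected in Proposition~\ref{p:nonA} together with \ref{ipm}. The guiding observation is that each displayed cardinality hypothesis forces the relevant symmetric differences to be so small (relative to $|X|$) that the separation or complementation condition is automatically met, while simultaneously the class $[X]_\boxminus$ is so impoverished that triviality holds as well. So the work is really to check that, under each disjunct of~(\ref{e:propvsmall}), every $Y, Z \in [X]_\boxminus$ satisfies $|Y \triangle Z| \vartriangleleft |X|$ vacuously or trivially, and that $[X]_{E(\boxminus)} = [X]_\boxminus$.

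First I would dispatch the separation-and-trivial claim by examining the three disjuncts of~(\ref{e:propvsmall}) in turn. If $\mathcal M \models |M| \leq 1$, then there is at most one concept up to the relevant equivalences, so $[X]_\boxminus$ is a singleton and both triviality and~(\ref{e:sepv}) hold with nothing to check. If $|X| = 0$, then any $Y \boxminus X$ also has $|Y| = 0$, so $Y = X$ is the unique member of $[X]_\boxminus$; again $[X]_{E(\boxminus)} = [X]_\boxminus$ and the separation condition is vacuous. The substantive disjunct is $|X| = |M| < \omega$: here $X = M$ since a finite concept equinumerous with the whole universe must exhaust it, so once more $[X]_\boxminus = \{X\}$ and both conclusions follow. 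In each case the key point is that $[X]_\boxminus$ collapses to a single element, whence $E(\boxminus)_X$ is trivial, and~(\ref{e:sepv}) is satisfied because its hypothesis $E(Y,Z)$ only ever fires on $Y = Z$, giving $|Y \triangle Z| = 0 \vartriangleleft |X|$ via Proposition~\ref{p:easytoprove} (or trivially when $|X| = 0$).

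For the second assertion, with $|M| = 2$ and $|X| = 1$, I would note that $[X]_\boxminus$ now contains exactly the two singletons of $M$, say $X = \{a\}$ and its complement $M - X = \{b\}$, since both are equinumerous with one-element complements. There are thus only two possibilities for $E(\boxminus)_X$, depending on whether $E$ relates $\{a\}$ to $\{b\}$. If it does not, then $[X]_{E(\boxminus)} = \{X\}$ and the only instance of $E(Y,Z)$ among the relevant pairs is $Y = Z$, so $E(\boxminus)_X$ is separative (and indeed trivial, as $[X]_\boxminus$ has the structure forcing this). If $E$ does relate the two singletons, then $[X]_{E(\boxminus)} = [X]_\boxminus$, so $E(\boxminus)_X$ is trivial; and since $Y \triangle Z = M$ while $M - (Y \triangle Z) = \emptyset$, the disjunct $|M - (Y \triangle Z)| \vartriangleleft |X|$ of~(\ref{e:compv}) holds, making it a complementation. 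This matches the stated ``either both trivial and a complementation, or a separation.''

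The main obstacle I anticipate is bookkeeping rather than depth: one must be careful that the ``proper'' qualifiers from Definition~\ref{def:trivsepcompl0} are not being claimed, so that the overlap between triviality and separation (respectively complementation) in these degenerate cases is permitted and indeed expected. The only genuine arithmetic input is the equivalence $|W| \vartriangleleft |X| \Leftrightarrow W = \emptyset$ when $|X| = 1$ (a finite, nonzero cardinal), which follows from Proposition~\ref{p:easytoprove} since $n \times |W| < |X| = 1$ for all $n \in \omega$ forces $|W| = 0$; this is exactly the appeal to~(\ref{e:nonApsplit}) and~(\ref{e:nonAexp}) flagged before the statement. With that equivalence in hand, every verification above reduces to checking which singletons lie in $[X]_{E(\boxminus)}$, which is immediate.
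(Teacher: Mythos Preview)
Your approach is essentially the same as the paper's, which simply notes that the proposition follows from \ref{ipm} and items (\ref{e:nonApsplit}), (\ref{e:nonAexp}) of Proposition~\ref{p:nonA}; your case-by-case unpacking is exactly the kind of verification that remark gestures at.

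One slip to correct: in the second assertion, when $E$ does \emph{not} relate the two singletons, you write that $E(\boxminus)_X$ is ``separative (and indeed trivial, as $[X]_\boxminus$ has the structure forcing this).'' That parenthetical is wrong. Triviality requires $[X]_{E(\boxminus)} = [X]_\boxminus$, but here $[X]_\boxminus = \{\{a\},\{b\}\}$ has two elements while $[X]_{E(\boxminus)} = \{X\}$ has one. So in this branch $E(\boxminus)_X$ is separative and \emph{not} trivial---which is precisely what the proposition's dichotomy ``either both trivial and a complementation, \emph{or} it is a separation'' intends. Simply delete the parenthetical and your argument is clean. (Your handling of the $|X|=0$ edge case is also a bit loose---$0 \vartriangleleft 0$ fails, so strictly the separation implication does not hold there---but that is a wrinkle in the proposition's formulation rather than in your strategy.)
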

%\begin{proof}
%This is a different version of the argument that ended the proof of Theorem~\ref{thm:finite}. Assume (\ref{e:propvsmall});  if $|M|\not\leq 1$, then one of the other two disjuncts holds, and $\XK{\emptyset} = \{\emptyset\}$, and $\XK{M} = \{M\}$.  In both cases $E$ is both cardinality coarsening and a separation.

%For the second assertion, if $M=\{a,b\}$ and $|X|=1$, then $X$ is either $A= \{a\}$ or $B= \{b\}$.  If $\xk = \{A\}$ or $\xk = \{B\}$ then $E$ is a separation.  The only other option is that $\xk = \{A,B\}$, in which case it is both a complementation and cardinality coarsening.
%\end{proof}

\begin{prop}\label{p:symsep}
If $Y,Z\in [X]_\boxminus$ are symmetric, then $|Y\triangle Z|\vartriangleleft |X|$.\end{prop}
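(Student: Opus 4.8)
The plan is to unpack the definition of symmetry for the pair $(Y,Z)$ and then run a short contradiction argument resting only on comparability and transitivity of $\trianglelefteq$. Since $Y,Z$ are symmetric, there is a witness---the set called $Z$ in the definition of symmetry, which I rename $W$ to avoid the clash---with $|W|=|Y|$, distinct from $Y$ and $Z$, for which neither (\ref{e:zsymm1}) nor (\ref{e:zsymm2}) holds. Writing $C=W-(Y\cup Z)$, this says $\neg(|C|\trianglelefteq|Y-Z|)$ and $\neg(|C|\trianglelefteq|Z-Y|)$. By the comparability of $\trianglelefteq$ in (\ref{e:nonAcomp}), each negation upgrades to a strict inequality, giving $|Y-Z|\vartriangleleft|C|$ and $|Z-Y|\vartriangleleft|C|$. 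The only further fact I need about $C$ is the trivial bound $|C|\leq|W|=|Y|=|X|$ (the last equality because $Y\in[X]_\boxminus$), whence $|C|\trianglelefteq|X|$.

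Next I would reduce to one of the two difference sets. By \ref{cc} we may assume without loss of generality that $|Z-Y|\leq|Y-X|$ in $\mathcal M$; the other case is symmetric, since both differences satisfy the same $\vartriangleleft$-inequality against $C$. Under this assumption $|Y\triangle Z|=|(Y-Z)\sqcup(Z-Y)|\leq 2\times|Y-Z|$, so taking $n=2$ in the definition of $\trianglelefteq$ yields $|Y\triangle Z|\trianglelefteq|Y-Z|$.

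The heart of the argument is then a contradiction. Suppose, for contradiction, that $|X|\trianglelefteq|Y\triangle Z|$. Transitivity (\ref{e:nonAtrans}) applied to $|X|\trianglelefteq|Y\triangle Z|\trianglelefteq|Y-Z|$ gives $|X|\trianglelefteq|Y-Z|$. Combining with $|C|\trianglelefteq|X|$ and transitivity again yields $|C|\trianglelefteq|Y-Z|$, contradicting $\neg(|C|\trianglelefteq|Y-Z|)$ obtained above. Hence $\neg(|X|\trianglelefteq|Y\triangle Z|)$. By comparability (\ref{e:nonAcomp}) the only remaining possibility is $|Y\triangle Z|\trianglelefteq|X|$, and together with the failure of the reverse relation this is exactly $|Y\triangle Z|\vartriangleleft|X|$, as required.

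The one place demanding care---and the step I expect to be the main obstacle---is passing from the two separate strict smallness facts $|Y-Z|\vartriangleleft|C|$ and $|Z-Y|\vartriangleleft|C|$ to the strictness of $|Y\triangle Z|\vartriangleleft|X|$ for the combined symmetric difference. Naively summing the two would only deliver $|Y\triangle Z|\trianglelefteq|X|$, losing strictness; the contradiction argument above is what recovers the strict relation, and it is essential there that the bound $|C|\leq|X|$ lets $X$ inherit the role of $C$ as a witness against which each difference set is strictly small. (If one prefers an arithmetic route, the same conclusion follows from Proposition~\ref{p:easytoprove}: for every $m$ one has $m\times|Y\triangle Z|\leq 2m\times|Y-Z|<|C|\leq|X|$, whence $|Y\triangle Z|\vartriangleleft|X|$; but the purely order-theoretic contradiction keeps the proof self-contained.)
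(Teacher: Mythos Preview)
Your argument is correct. The paper does not give a detailed proof of this proposition; it simply records that it follows from the $\trianglelefteq$-arithmetic of Proposition~\ref{p:nonA} (citing \ref{ipm}, (\ref{e:nonApsplit}), and (\ref{e:nonAexp}) as the relevant tools for the block of propositions containing this one). Your proof is in exactly that spirit: unpack symmetry to get $|Y-Z|\vartriangleleft|C|$ and $|Z-Y|\vartriangleleft|C|$ with $|C|\leq|W|=|X|$, then reduce $|Y\triangle Z|$ to one of the differences via \ref{cc} and close with a short contradiction through (\ref{e:nonAtrans}) and (\ref{e:nonAcomp}). The alternative ``arithmetic route'' you sketch via Proposition~\ref{p:easytoprove} is equally valid and perhaps even cleaner.

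Two minor corrections. First, in your reduction step you write ``$|Z-Y|\leq|Y-X|$'': this is a slip for $|Z-Y|\leq|Y-Z|$. Second, a small framing point: your final paragraph worries that ``naively summing'' $|Y-Z|\vartriangleleft|C|$ and $|Z-Y|\vartriangleleft|C|$ would lose strictness, but in fact it does not: using \ref{cc} to bound $|Y\triangle Z|\leq 2\max(|Y-Z|,|Z-Y|)$, Proposition~\ref{p:easytoprove} gives $n\times|Y\triangle Z|\leq 2n\times\max(|Y-Z|,|Z-Y|)<|C|\leq|X|$ for every $n$, so strictness survives directly. Your contradiction argument is a perfectly good alternative, but the concern that motivates it is unfounded.
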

%\begin{proof}
%If $Y,Z\in \xkl$ are symmetric, then for some $W$ with $\mathcal M\models |W|=|X|$,  \begin{equation}
%|Y- Z|, |Z- Y|\vartriangleleft |W- (Y\cup Z)|
%\end{equation}  
%So by (\ref{e:nonApsplit}) of Proposition~\ref{p:nonA}, we have $|Y\triangle Z|\vartriangleleft |W- (Y\cup Z)|$, and then by (\ref{e:nonAexp}) of that same Proposition, $|Y\triangle Z|\vartriangleleft |W|=|X|$.  
%\end{proof}
\begin{prop}
\label{p:almcompcompl}  \sloppy
If $Y,Z\in [X]_\boxminus$ are almost complementary, then $|Y\cap Z|, |M-(Y\cup Z)|\vartriangleleft |X|$.
\end{prop}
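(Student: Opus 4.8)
The plan is to read two strict $\vartriangleleft$-inequalities directly off the failure of almost complementarity, and then absorb the rest of $Y$ using the expansion property~(\ref{e:nonAexp}). Since $Y,Z\in[X]_\boxminus$ are almost complementary, by definition none of~(\ref{e:almostcapY})--(\ref{e:almostextX}) holds, where the $X,Y$ of that definition are read as $Y,Z$ here. I would invoke comparability~(\ref{e:nonAcomp}) to convert two of these failures into strict inequalities against $|Y- Z|$: the failure of~(\ref{e:almostcapX}) means $\neg(|Y- Z|\trianglelefteq|Y\cap Z|)$, so comparability forces $|Y\cap Z|\trianglelefteq|Y- Z|$ and hence $|Y\cap Z|\vartriangleleft|Y- Z|$; likewise the failure of~(\ref{e:almostextY}) yields $|M-(Y\cup Z)|\vartriangleleft|Y- Z|$. (The failures of~(\ref{e:almostcapY}) and~(\ref{e:almostextX}) give the analogous statements with $Z- Y$ in place of $Y- Z$, which I will not need.)

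The key step is then to note that $Y$ is the disjoint union $Y=(Y- Z)\sqcup(Y\cap Z)$, so that $|(Y- Z)\sqcup(Y\cap Z)|=|Y|$, and $|Y|=|X|$ because $Y\boxminus X$. Applying~(\ref{e:nonAexp}) to each of the two inequalities above, adjoining $Y\cap Z$ on the right, I obtain $|Y\cap Z|\vartriangleleft|(Y- Z)\sqcup(Y\cap Z)|=|Y|$ and $|M-(Y\cup Z)|\vartriangleleft|(Y- Z)\sqcup(Y\cap Z)|=|Y|$ simultaneously. Since $\trianglelefteq$ and $\vartriangleleft$ depend only on the $\mathcal M$-cardinalities of their arguments (any $\mathcal M$-bijection $Y\to X$ induces, by comprehension, a bijection $W\times Y\to W\times X$ for each finite $W$), the identity $|Y|=|X|$ lets me replace $|Y|$ by $|X|$, giving $|Y\cap Z|,\,|M-(Y\cup Z)|\vartriangleleft|X|$ as required.

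The argument is short, so there is no serious obstacle; the only points needing care are orienting the comparability step correctly, so that I read off $\vartriangleleft$ in the direction I want rather than its reverse, and confirming that $(Y- Z)\sqcup(Y\cap Z)$ really is $Y$, so that~(\ref{e:nonAexp}) delivers a bound against $|X|$ itself and not against some strictly larger concept. As an alternative I could route $|M-(Y\cup Z)|\vartriangleleft|X|$ through transitivity~(\ref{e:nonAtrans}) from $|M-(Y\cup Z)|\vartriangleleft|Y- Z|\trianglelefteq|Y|$, but invoking~(\ref{e:nonAexp}) disposes of both conjuncts uniformly and is cleaner.
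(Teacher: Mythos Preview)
Your proof is correct and matches the paper's approach: the paper simply states that this proposition (along with the two preceding it) follows from \ref{ipm}, (\ref{e:nonApsplit}), and (\ref{e:nonAexp}), without spelling out details, and your argument via (\ref{e:nonAcomp}) and (\ref{e:nonAexp}) is exactly how one fills this in for Proposition~\ref{p:almcompcompl}. The only difference is that you cite (\ref{e:nonAcomp}) explicitly rather than (\ref{e:nonApsplit}), but the latter is more relevant to Proposition~\ref{p:symsep} than to this one, so your citation is in fact the more precise one.
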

\fussy
%\begin{proof}
%If $Y,Z$ are almost complementary, then clearly either $|Y\cap Z|, |M-(Y\cup Z)|\vartriangleleft |Y- Z|$ or $|Y\cap Z|, |M-(Y\cup Z)|\vartriangleleft |Z- Y|$.  In either case (\ref{e:nonAexp}) of Proposition~\ref{p:nonA} implies that $|Y\cap Z|, |M- (Y\cup Z)|\vartriangleleft |X|$.
%\end{proof}

\begin{prop}\label{p:inf-exclusive}
If $\mathcal M\models \omega \leq |X|\leq |M-X|$ and $E(\boxminus)_X$ is either separative or complementative, then it is not trivial.
\end{prop}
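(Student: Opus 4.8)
The plan is to argue by contradiction, reducing immediately to the complementative case. By Definition~\ref{def:trivsepcompl0} every separative slice is complementative, so it suffices to show that if $E(\boxminus)_X$ is complementative then it is not trivial. Thus I would assume, for contradiction, that $E(\boxminus)_X$ is both complementative and trivial, the latter meaning $[X]_{E(\boxminus)}=[X]_\boxminus$, i.e. $[X]_\boxminus\subseteq[X]_E$. The goal is then to exhibit two concepts $Y,Z\in[X]_\boxminus$ whose symmetric difference is too large for complementativity to tolerate.

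The engine is a single construction. Using \ref{ism} and the infinitude of $X$, I would first split $X=X_1\sqcup X_2$ with $|X_1|=|X_2|=|X|$; the fact that an infinite concept partitions into two full-sized halves follows from $|X\sqcup X|=|X|$. Then, using the hypothesis $|X|\leq|M-X|$, fix an injection witnessing $X\preceq M-X$ and let $X'$ be its image, so that $X'\subseteq M-X$ and $|X'|=|X|$. Set $Y=X$ and $Z=X_1\cup X'$. The first thing to verify is $Z\boxminus X$: since $X_1$ and $X'$ are disjoint and both of size $|X|$, \ref{ism} gives $|Z|=|X|$; and since $M-Z=X_2\sqcup((M-X)-X')$ while $M-X=X'\sqcup((M-X)-X')$, a bijection $X_2\to X'$ extended by the identity on $(M-X)-X'$ is a bijection $M-Z\to M-X$, whence $|M-Z|=|M-X|$. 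So $Y,Z\in[X]_\boxminus$, and by triviality $E(Y,Z)$ holds.

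The second step is the cardinal bookkeeping on the symmetric difference. A direct calculation gives $Y\triangle Z=X_2\sqcup X'$, and since $X_1$ is disjoint from both $X_2$ and $X'$ we have $X_1\subseteq M-(Y\triangle Z)$. Hence $|Y\triangle Z|\geq|X'|=|X|$ and $|M-(Y\triangle Z)|\geq|X_1|=|X|$ in $\mathcal M$. Because $X$ is infinite in $\mathcal M$, Proposition~\ref{p:triinf} allows me to read $\vartriangleleft$ as strict cardinal inequality, so neither $|Y\triangle Z|\vartriangleleft|X|$ nor $|M-(Y\triangle Z)|\vartriangleleft|X|$. This contradicts complementativity applied to the $E$-equivalent pair $Y,Z$, and the contradiction establishes the proposition.

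I expect the main obstacle to be arranging all three requirements at once: $Z\boxminus X$, together with both $|Y\triangle Z|\geq|X|$ and $|M-(Y\triangle Z)|\geq|X|$. The splitting of $X$ into two halves each of full cardinality is exactly the device that reconciles them, since $X_1$ supplies the large complement of $Y\triangle Z$ while $X_2$ is traded for the external copy $X'$ so as to preserve bicardinality. Everything after the construction is routine: the bicardinality check uses \ref{ism} and an explicit bijection, and the final clash uses Proposition~\ref{p:triinf} to translate the metatheoretic relation $\vartriangleleft$ into strict inequality in $\mathcal M$.
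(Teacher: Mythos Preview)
Your proof is correct. The paper does not give an explicit argument for this proposition, merely noting that it (together with the neighbouring propositions) ``follow[s] from \ref{ipm} and (\ref{e:nonApsplit}) and (\ref{e:nonAexp}) of Proposition~\ref{p:nonA}''. Your construction is a clean way to cash this out: you use \ref{ism} (rather than \ref{ipm}) to split $X$ into two halves of full size, and Proposition~\ref{p:triinf} (rather than the non-Archimedean inequalities) to convert $\vartriangleleft$ into strict cardinal inequality---both perfectly legitimate choices, and arguably more direct for this particular claim. The verification that $Z\boxminus X$ and the computation of $Y\triangle Z$ are accurate.
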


%\begin{proof}
%By \ref{ipm}, there are disjoint concepts $Y,Z$ such that $Y\cup Z = X$;  note that $Y,Z\in [X]_\boxminus$.  Since $Y,Z$ are disjoint we have that $|Y\triangle Z| = |X|$.  Likewise by construction $|M-(Y\cup Z)| = |M-X|\geq |X|$.  Thus if $E(\boxminus)_X$ is either separative or complementative, $\neg E(Y,Z)$.
%\end{proof}

The crucial move in proving Theorem~\ref{thm:main-0formal} is in the following Lemma:
\begin{lem}\label{lem:oneside}
Suppose $\mathcal M\models  |M|>2 \wedge  1<|X|\leq|M-X|$ and $E$ is an $L_0$ definable equivalence relation over $\mathcal M$.  Then $E(\boxminus)_X$ is exactly one of:  trivial, properly separative, or properly complementative.\end{lem}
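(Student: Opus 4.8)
The plan is to treat disjointness and exhaustiveness separately. Pairwise disjointness of the three classes is almost immediate from Definition~\ref{def:trivsepcompl0}: both \emph{properly separative} and \emph{properly complementative} are by stipulation non-trivial, while \emph{properly complementative} is non-separative and \emph{properly separative} is separative; the hypotheses $|M|>2$ and $1<|X|\le|M-X|$ serve to exclude the degenerate coincidences isolated in the preceding propositions. It then remains to show the three classes are exhaustive, and for this I would establish the single implication: if $E(\boxminus)_X$ is not trivial, then it is complementative. Granting this, a non-trivial $E(\boxminus)_X$ is complementative and hence either separative (so properly separative) or not (so properly complementative); together with the trivial case this exhausts the options and yields ``exactly one''.

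I would prove the contrapositive of the displayed implication: if $E(\boxminus)_X$ is not complementative, then it is trivial. Unwinding Definition~\ref{def:trivsepcompl0} and using comparability~(\ref{e:nonAcomp}) to negate the two $\vartriangleleft$ clauses, non-complementativity supplies witnesses $Y,Z\in[X]_\boxminus$ with $E(Y,Z)$, $|X|\trianglelefteq|Y\triangle Z|$, and $|X|\trianglelefteq|M-(Y\triangle Z)|$. Since $|X|>0$, the first inequality forces $Y\neq Z$, and by the contrapositive of Proposition~\ref{p:symsep} it also shows $Y,Z$ are not symmetric. Writing $M-(Y\triangle Z)=(Y\cap Z)\sqcup(M-(Y\cup Z))$ and applying the splitting property~(\ref{e:nonAsplit}) to the second inequality gives $|X|\trianglelefteq|Y\cap Z|$ or $|X|\trianglelefteq|M-(Y\cup Z)|$; by the contrapositive of Proposition~\ref{p:almcompcompl} this shows $Y,Z$ are not almost complementary. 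Hence $Y,Z$ are opportune.

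Next I would upgrade $Y$ to an ideally opportune concept. Since $Y\in[X]_\boxminus$, the hypothesis $|X|\le|M-X|$ gives $|Y|=|X|\le|M-X|=|M-Y|$, so $|Y|\trianglelefteq|M-Y|$ (take $n=1$); with the opportune pair $Y,Z$ in hand, $Y$ is ideally opportune. Lemma~\ref{lem:extend} then yields $W\in[Y]_E$ for every $W$ with $Y\boxminus W$, that is, $[Y]_\boxminus\subseteq[Y]_E$, so $E(\boxminus)_Y$ is trivial. Finally, because $[Y]_\boxminus=[X]_\boxminus$, every concept $\boxminus$-equivalent to $X$ is $E$-equivalent to $Y$, and since $X$ is itself such a concept, transitivity of $E$ places all of $[X]_\boxminus$ in $[X]_E$; thus $E(\boxminus)_X$ is trivial, as required.

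The main obstacle is the verification in the second paragraph that the witnesses $Y,Z$ are genuinely opportune: this is exactly where Propositions~\ref{p:symsep} and~\ref{p:almcompcompl} (read contrapositively) and the non-Archimedean bookkeeping of $\trianglelefteq$ and $\vartriangleleft$ do their work, and one must be careful because ordinary cardinal cancellation is unavailable for these metatheoretic relations---only the arithmetic packaged in Proposition~\ref{p:nonA}, notably the splitting~(\ref{e:nonAsplit}) and comparability~(\ref{e:nonAcomp}), may be used. Everything downstream, namely ideal opportuneness and the transfer of triviality from $Y$ to $X$, is then routine given Lemma~\ref{lem:extend}.
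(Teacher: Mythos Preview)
Your proof is correct and uses the same ingredients as the paper's (Propositions~\ref{p:symsep} and~\ref{p:almcompcompl}, Lemma~\ref{lem:extend}, and the $\trianglelefteq$-arithmetic of Proposition~\ref{p:nonA}), but the organization differs in two modest ways worth noting. First, the paper argues in the forward direction: assuming $E(\boxminus)_X$ non-trivial, it infers (via Lemmata~\ref{lem:class} and~\ref{lem:extend}) that no opportune pair lies in the slice, so every $E$-related pair in $[X]_\boxminus$ is symmetric or almost complementary, whence Propositions~\ref{p:symsep} and~\ref{p:almcompcompl} yield separative or complementative. You instead take the contrapositive, extracting from a single witness to non-complementativity an opportune pair and pushing it through Lemma~\ref{lem:extend} to triviality; this is a clean reformulation and spares you the appeal to Proposition~\ref{p:permpres} needed to transport the ``no opportune pairs'' conclusion across all $E(\boxminus)$-classes in the slice. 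Second, the paper splits into the cases $\mathcal M\models|X|<\omega$ (handled by Lemma~\ref{lem:class} directly, since finite concepts are relatively finite) and $\mathcal M\models|X|\geq\omega$ (handled by Lemma~\ref{lem:extend} via ideal opportuneness), whereas you invoke Lemma~\ref{lem:extend} uniformly and let \emph{its} proof absorb the finite case. Both routes are sound; yours is somewhat more economical.
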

\begin{proof}
We first establish the lemma for the case where $|X|<\omega$ in $\mathcal M$.  
Obviously no more than one can obtain; we show at least one must.
Since all equinumerous sets finite in $\mathcal M$ are relatively finite in $\mathcal M$, by Lemma~\ref{lem:class}, if there are opportune sets in $[X]_{E(\boxminus)}$, then 
\[[X]_\boxminus = [X]_\boxminus \cap RF(X) \subseteq [X]_{E(\boxminus)}\]
and so $E(\boxminus)_X$ is trivial.  So we may assume there are no opportune sets $Y,Z \in \xkl$, and thus 
%\marginpar{changed sentence}
that for any $Y,Z\in \xkl$, either $Y, Z$ are symmetric, or $Y, Z$ are almost complementary by Lemma~\ref{lem:class}.
%\marginpar{added lemma reference}

Suppose now that all pairs of set in $[X]_{E(\boxminus)}$ are symmetric.  
%\marginpar{corrected, shortened sentence}
By Proposition~\ref{p:symsep}, all such pairs satisfy (\ref{e:sepv}).  Now by Proposition~\ref{p:permpres} this holds for every concept bicardinally equivalent to $X$; thus $E(\boxminus)_X$ is properly separative.

On the other hand, if there are $Y,Z\in \xkl$ 
%\marginpar{corrected $\approx$ to $\boxminus$}
that are not symmetric, then these are almost complementary.  By Proposition~\ref{p:almcompcompl}, the non-symmetric concepts in $\xkl$ satisfy (\ref{e:compv}).  Clearly then (\ref{e:sepv}) fails of them, so $E(\boxminus)_X$ is not separative.    By Proposition~\ref{p:symsep} all the symmetric concepts satisfy (\ref{e:sepv}), and so (\ref{e:compv}).  Proposition~\ref{p:permpres} ensures then that $E(\boxminus)_X$ is complementative, and since (as just shown) not separative, it is properly complementative.    So the result holds for all $|X|$ finite in $\mathcal M$.

The case in which $X$ is infinite in $\mathcal M$ is similar.  By Proposition~\ref{p:inf-exclusive}, at most one of the options can hold; we show at least one must.  By Lemma~\ref{lem:extend} if $E(\boxminus)_X$ is non-trivial then there are no ideally opportune concepts in $\xkl$.  Since by assumption $|X|\leq |M-X|$ it follows that there are no opportune concepts in $\xkl$.  The above argument establishes again that $E(\boxminus)_X$ is either properly separative or properly complementative. 
\end{proof}

\begin{cor}
\label{cor:anti1}
Suppose $\mathcal M\models  |M|>2 \wedge  1<|M-X|<|X|$ and $E$ is an $L_0$ definable equivalence relation over $\mathcal M$.  Then $E(\boxminus)_X$ is exactly one of:  trivial, properly separative, or properly complementative.
\end{cor}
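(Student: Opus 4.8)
The plan is to deduce Corollary~\ref{cor:anti1} from Lemma~\ref{lem:oneside} by passing to complements. The key observation is that the hypothesis $1<|M-X|<|X|$ on $X$ is exactly the hypothesis of Lemma~\ref{lem:oneside} on the complement $X':=M-X$: we then have $|X'|=|M-X|$ and $|M-X'|=|X|$, so $\mathcal M\models |M|>2 \wedge 1<|X'|\leq|M-X'|$. What remains is to manufacture an equivalence relation on $[X']_\boxminus$ whose classification pulls back to that of $E$ on $[X]_\boxminus$, and to check that this ``pulling back'' respects the three profiles.

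First I would define the \emph{complement} $E^{c}$ of $E$ by setting $E^{c}(Y,Z)\Leftrightarrow E(M-Y,M-Z)$. Naming $M-Y$ and $M-Z$ by comprehension and substituting into the defining $L_0$-formula of $E$ shows that $E^{c}$ is an $L_0$-definable equivalence relation, so Lemma~\ref{lem:oneside} applies to the pair $(E^{c},X')$ and yields that $E^{c}(\boxminus)_{X'}$ is exactly one of trivial, properly separative, or properly complementative. Next I would record the dictionary between $E$ at $X$ and $E^{c}$ at $X'$. The map $Y\mapsto M-Y$ is a bijection of $[X]_\boxminus$ onto $[X']_\boxminus$ carrying $E$ to $E^{c}$, since $E^{c}(M-Y,M-Z)=E(Y,Z)$; and symmetric differences are invariant under it because $(M-Y)\triangle(M-Z)=Y\triangle Z$. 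Consequently $E^{c}(\boxminus)_{X'}$ is trivial if and only if $E(\boxminus)_X$ is, and likewise $E^{c}(\boxminus)_{X'}$ is non-trivial if and only if $E(\boxminus)_X$ is.

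The one genuine subtlety---and the step I expect to be the crux---is that the thresholds in (\ref{e:sepv}) and (\ref{e:compv}) differ on the two sides: the classification of $E^{c}(\boxminus)_{X'}$ is measured against $|X'|=|M-X|$, whereas that of $E(\boxminus)_X$ is measured against $|X|$. Here I would use that $|M-X|<|X|$ gives $|M-X|\trianglelefteq|X|$, so that by (\ref{e:nonAtrans}) any $W$ with $|W|\vartriangleleft|M-X|$ also satisfies $|W|\vartriangleleft|X|$. Applying this with $W=Y\triangle Z$ and with $W=M-(Y\triangle Z)$, separativity of $E^{c}(\boxminus)_{X'}$ at threshold $|M-X|$ implies separativity of $E(\boxminus)_X$ at threshold $|X|$, and similarly for complementativity. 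Combining this threshold-monotonicity with the triviality dictionary, I conclude that $E(\boxminus)_X$ is trivial, separative, or complementative according as $E^{c}(\boxminus)_{X'}$ is trivial, properly separative, or properly complementative. Since the qualifier ``proper'' already renders the three profiles mutually exclusive and non-triviality transfers across the bijection, exactly one of them holds of $E(\boxminus)_X$, which is the desired conclusion.
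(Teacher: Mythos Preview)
Your proof is correct and follows essentially the same route as the paper: define $E^{c}(Y,Z)\Leftrightarrow E(M-Y,M-Z)$, apply Lemma~\ref{lem:oneside} to $E^{c}$ at $X'=M-X$, and transfer the trichotomy back across the complement bijection. You are in fact more careful than the paper's proof, which simply asserts that $E^{c}(\boxminus)_{M-X}$ ``has the same profile as $E(\boxminus)_X$'' without addressing the threshold mismatch you identified; your threshold-monotonicity step via (\ref{e:nonAtrans}), together with the observation that the three profiles are mutually exclusive by definition, is exactly what fills that gap.
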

\begin{proof}
It is easy to see that if $E$ is an equivalence relation, then so is the $L_0$ definable
\[ E^c(X,Y) \Leftrightarrow E(M-X, M-Y) \]
Thus, by Lemma~\ref{lem:oneside}, $E^c(\boxminus)_{M-X}$ is exactly one of trivial, properly separative, or properly complementative.  But by the definition of $E^c$, $E^c(\boxminus)_{M-X}$ has the same profile as $E(\boxminus)_X$.
\end{proof}

Lemma~\ref{lem:oneside} and Corollary~\ref{cor:anti1} suffice to prove the first part of Theorem~\ref{thm:main-0formal}.  The remainder follows from the following lemma.

\begin{lem}\label{p:nuisancelike}
Suppose $\mathcal M\models |X|<\omega$ and there is a $W\neq X$ with $W\in [X]_{E(\boxminus)}$, and $E(\boxminus)_X$ non-trivial.  Then for any $Y,Z \in [X]_{\boxminus}$, either 
(\ref{e:septight}) or (\ref{e:comptight}) holds.  
\end{lem}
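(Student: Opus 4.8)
Lemma~\ref{p:nuisancelike} asks us to show that when $\mathcal M\models|X|<\omega$, there is some $W\neq X$ with $W\in[X]_{E(\boxminus)}$, and $E(\boxminus)_X$ is nontrivial, then the ``refinement'' inequalities of Definition~\ref{def:trivsepcompl0} actually hold as \emph{biconditionals}: either (\ref{e:septight}) or (\ref{e:comptight}). By Lemma~\ref{lem:oneside} and Corollary~\ref{cor:anti1} we already know that $E(\boxminus)_X$ is exactly one of trivial, properly separative, or properly complementative; since we are assuming nontriviality, it is one of the latter two, giving the forward direction of each implication (\ref{e:sepv}) or (\ref{e:compv}). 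So the real content is the \emph{converse}: whenever $Y,Z\in[X]_\boxminus$ satisfy $|Y\triangle Z|\vartriangleleft|X|$ (in the separative case), or that disjunct or its complement-analogue (in the complementative case), we must produce $E(Y,Z)$.

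\emph{My plan} is to reduce the converse to the triviality machinery of Lemma~\ref{lem:class} applied ``locally'' at a smaller cardinality. The key observation is that when $|Y\triangle Z|\vartriangleleft|X|$ with $X$ finite in $\mathcal M$, the symmetric difference is genuinely small relative to $X$, so $Y$ and $Z$ agree on a large common core $Y\cap Z$ of size comparable to $X$. First I would fix $Y,Z\in[X]_\boxminus$ with $|Y\triangle Z|\vartriangleleft|X|$ and argue that, using the hypothesis that some $W\neq X$ lies in $[X]_{E(\boxminus)}$ together with permutation invariance (\ref{p:perminv}), we can reposition $Y$ and $Z$ by a single automorphism fixing the equivalence class so that they become \emph{relatively finite} (Definition~\ref{def:relfin}): that is, arrange $|Y'-Z'|=|Z'-Y'|$. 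Concretely, since $|Y\triangle Z|\vartriangleleft|X|$ and in the finite-in-$\mathcal M$ regime $\vartriangleleft$ forces strict comparison, I expect $|Y-Z|$ and $|Z-Y|$ to already be equal (both equal portions of the small symmetric difference, with $|Y|=|Z|$ forcing balance), making $Y,Z$ relatively finite outright. Then Lemma~\ref{lem:class} — once we verify $X$ is opportune, which follows from nontriviality via the contrapositive of Lemma~\ref{lem:oneside}'s internal argument (nontriviality means not every pair is symmetric-or-almost-complementary, i.e. an opportune pair exists) — delivers $RF(X)\subseteq[X]_{E(\boxminus)}$, whence $Y,Z\in[X]_{E(\boxminus)}$ and so $E(Y,Z)$.

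For the complementative case I would run the parallel argument on the second disjunct: if $|M-(Y\triangle Z)|\vartriangleleft|X|$, then $Y$ and $M-Z$ have small symmetric difference (since $M-(Y\triangle Z)=(Y\cap Z)\cup(M-(Y\cup Z))$ relates $Y$ to the complement of $Z$), so by the Remark following Definition~\ref{def:trivsepcompl0}, $M-Z\in[X]_{E(\boxminus)}$, and applying the separative-style argument to the pair $Y,M-Z$ gives $E(Y,M-Z)$; combined with $E(Z,M-Z)$ (again from proper complementativity and the Remark) and transitivity of $E$, we get $E(Y,Z)$. The bookkeeping is to check that $\vartriangleleft|X|$ on one disjunct translates, via Proposition~\ref{p:nonA} (especially the splitting laws (\ref{e:nonApsplit}) and (\ref{e:nonAexp})), into relative finiteness of the appropriate pair.

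\emph{The main obstacle} I anticipate is the finite-in-$\mathcal M$-but-possibly-infinite-in-the-metatheory subtlety that the whole apparatus of $\trianglelefteq,\vartriangleleft$ was built to handle. Establishing that $|Y\triangle Z|\vartriangleleft|X|$ genuinely yields an opportune (equivalently, relatively finite) configuration — rather than a symmetric or almost-complementary one — is where I must be careful: I will need to invoke Proposition~\ref{p:easytoprove} to unpack $\vartriangleleft$ as ``$n\times|Y\triangle Z|<|X|$ for all $n$'' and feed this into the almost-complementarity inequalities (\ref{e:almostcapY})--(\ref{e:almostextX}) to rule out those cases, ensuring Lemma~\ref{lem:class}'s hypothesis is met. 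Pinning down that the small-symmetric-difference pairs are exactly the relatively finite ones, and not accidentally symmetric, is the delicate step; everything else is transitivity of $E$, the complementation Remark, and direct appeals to Lemma~\ref{lem:class}.
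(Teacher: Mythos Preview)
Your proposal contains a genuine gap that inverts the central implication.  You write that ``nontriviality means not every pair is symmetric-or-almost-complementary, i.e.\ an opportune pair exists,'' and then plan to invoke Lemma~\ref{lem:class} to obtain $RF(X)\subseteq[X]_{E(\boxminus)}$.  But the direction is the reverse.  Lemma~\ref{lem:class} says: if an opportune pair exists in $[X]_{E(\boxminus)}$, then $RF(X)\subseteq[X]_{E(\boxminus)}$.  When $\mathcal M\models|X|<\omega$, every $Z\in[X]_\boxminus$ is relatively finite to $X$, so $RF(X)=[X]_\boxminus$, and the conclusion would be exactly that $E(\boxminus)_X$ is \emph{trivial}.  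Contrapositively, nontriviality forces that there is \emph{no} opportune pair in $[X]_{E(\boxminus)}$.  So the hypothesis you need for Lemma~\ref{lem:class} is precisely what fails.  (Indeed, if your argument went through it would prove $E(Y,Z)$ for \emph{all} $Y,Z\in[X]_\boxminus$, not just those with $|Y\triangle Z|\vartriangleleft|X|$, contradicting the nontriviality you assumed.)

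The paper's proof runs on exactly the opposite observation.  From nontriviality one concludes that the witnessing pair $X,W$ is \emph{not} opportune, hence either symmetric or almost complementary.  Each of these alternatives supplies concrete cardinal information---in the symmetric case $|X\cap W|\vartriangleright|X-W|,|W-X|$; in the almost-complementary case $|X\cap W|,|M-(X\cup W)|\vartriangleleft|X|$---and this information is fed directly into the preparatory moving lemma (Lemma~\ref{p:AtoX}) to produce, for any $Z$ with $|X\triangle Z|\vartriangleleft|X|$, a permutation fixing the class and carrying $Z-X$ into $\pi(X)$, whence $Z\in[X]_E$.  The complementative half is then handled by showing, when $|X\cap Z|,|M-(X\cup Z)|\vartriangleleft|X|$, that $|W\triangle Z|\vartriangleleft|W|$ and rerunning the same argument with $W$ in place of $X$.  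The key structural move you are missing is to exploit the \emph{failure} of opportuneness of the given pair $X,W$, rather than trying to manufacture opportuneness where it cannot exist.
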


\begin{proof}
By the first part of Theorem~\ref{thm:main-0formal}, $E(\boxminus)_X$ is either properly separative or  properly complementative. By Lemma~\ref{lem:class}, we know that though $X,W$ are relatively finite (since they are finite), they are not opportune, so $X,W$ are either  symmetric or  almost complementary. 

We will first show that if $X,W$ are symmetric, then for any $Z\in [X]_\boxminus$, 
\begin{equation}\label{e:septightt}  |X\triangle Z| \vartriangleleft |X| \Rightarrow \mathcal M\models E(X,Z)\end{equation}

Assume $X,W$  symmetric, so for some $H$ with $|X|=|W|=|H|$, 
\begin{equation}\label{e:H-X}
|H-(X\cup W)|\vartriangleright |X-W|, |W-X|\end{equation}
  Assuming 
\begin{equation}\label{e:sepassmpZ} 
|X\triangle Z| \vartriangleleft |X|\end{equation}
we have that $|X|=|H|\vartriangleright |H-(X\cup W)|$; it then follows from (\ref{e:nonApslcor}) of Proposition~\ref{p:nonA} that 
\begin{equation}
\label{e:nsfineq1}  |X\cap W| \vartriangleright |X-W|, |W-X|
\end{equation}
So by (\ref{e:nonAsplit}) of Proposition~\ref{p:nonA}, 
we have  \begin{equation}
\label{e:Zjustright}
|Z-  X|\leq |X\triangle Z| \vartriangleleft |X\cap W|
\end{equation}
Thus by Lemma~\ref{p:AtoX}, $Z-X\subseteq \pi(X)$
%\marginpar{fixed ref. of lem., corr. $X-Z$ to Z-X$}
 with $\pi(X\cap Z)=X\cap Z$ and $E(\pi(X), X)$.  Thus $Z\in [X]_E$, and this establishes (\ref{e:septight}).

Now we will show that if $X,W$ are almost complementary, then for any $Z\in [X]_\boxminus$, 
\begin{equation}\label{e:compltight}  \left[ |X\triangle Z| \vartriangleleft |X| \; \textrm{or} \; |X\cap Z|,|M-  (X\cup Z)|\vartriangleleft |X|\right]  \Rightarrow \mathcal M\models E(X,Z)\end{equation}
So assume $X,W$ are almost complementary.  

If (\ref{e:sepassmpZ}) obtains, then we have 
%\marginpar{Added paragraph break}
\begin{equation}
\label{e:Zcomplright}
|(Z-  X)\cap W| \leq |Z-  X|\leq |Z\triangle X| \vartriangleleft |X\cap W|
\end{equation}
by (\ref{e:nonApslcor}) of Proposition~\ref{p:nonA}, and so $Z\in [X]_E$ by Lemma~\ref{p:AtoX}.

On the other hand suppose
\begin{align}
\label{e:othercomplright}
|X\cap Z|&\vartriangleleft |X| \\
\label{e:othercomplright1} |M-  (X\cup Z)|&\vartriangleleft |X|
\end{align}
By (\ref{e:othercomplright}) and (\ref{e:nonApslcor}), $|X\cap Z|\vartriangleleft |X-Z|$.  But then by (\ref{e:othercomplright1}) and (\ref{e:nonApslcor}) again,\footnote{The reader may wish to aid her reasoning in these following arguments using Venn Diagrams;  label a set $A$  ``S'' for \emph{relatively small}, and $B$ ``L'' for \emph{relatively large}, if $A\triangleleft B$.  Proposition~\ref{p:nonA} basically asserts that the expected reasoning will obtain (e.g., if $A$ is relatively large but $A\cap B$ is relatively small, it follows that $A-B$ is relatively large).}
%\marginpar{added footnote} 
\begin{align}
|(W-  Z) \cap X|  \leq |X\cap Z| &\vartriangleleft |W\cap Z| & \textrm{and}\\
|W-  (X\cup Z)| \leq |M-  (X\cup Z)| & \vartriangleleft |W\cap Z|,  & \textrm{so} \\
|W-  Z| & \vartriangleleft |W\cap Z| & 
\end{align}
by (\ref{e:nonApsplit}).
As similar argument shows that 
\begin{equation}
|Z-  W| \vartriangleleft |W\cap Z|
\end{equation}
and thus that $|W\triangle Z|\vartriangleleft |W|$ by (\ref{e:nonAsplit}).
From here we can use the argument from earlier in this proof establishing (\ref{e:septight}), and the fact that $[W]_E = [X]_E$, and we have that $Z\in [X]_E$.
\end{proof}

From Theorem~\ref{thm:main-0formal} we obtain the following corollary, which we will use in proving Theorem~\ref{thm:main-formal}.

\begin{cor}
\label{cor:finiteMT}  If $X$ is finite in the metatheory, then $[X]_{E(\boxminus)}$ is either $[X]_\boxminus$, or $\{X\}$, or $\{X, M-X\}$.
\end{cor}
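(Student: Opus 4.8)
The plan is to reduce everything to a single arithmetic observation about $\vartriangleleft$ under metatheoretic finiteness, and then read off the three cases directly from Theorem~\ref{thm:main-0formal}. First I would record that a metatheoretically finite $X$ is Dedekind finite, hence $\mathcal M\models|X|<\omega$; this licenses use of the finite clause of Theorem~\ref{thm:main-0formal}. The corollary then splits on whether $E(\boxminus)_X$ is trivial: if it is, then $[X]_{E(\boxminus)}=[X]_\boxminus$ and we are in the first case at once. So I would assume $E(\boxminus)_X$ is nontrivial, whence by Theorem~\ref{thm:main-0formal} either (\ref{e:septight}) or (\ref{e:comptight}) holds.

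The crux, and the only step I expect to need genuine care, is the claim that for $X$ finite and nonempty in the metatheory and any set $A$, one has $|A|\vartriangleleft|X|$ iff $A=\emptyset$; the forward direction is what I need. Since $X$ is metatheoretically finite, $\mathcal M\models|X|=n$ for the actual finite size $n$ of $X$. If $A\neq\emptyset$ then $\mathcal M\models|A|\geq 1$, so for a set $Z$ with $|Z|=n$ the injection $z\mapsto(z,a_0)$ witnesses $\mathcal M\models n\times|A|\geq|Z|=n=|X|$, i.e.\ $|X|\trianglelefteq|A|$, contradicting the clause $\neg(|X|\trianglelefteq|A|)$ built into $\vartriangleleft$. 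Hence $A=\emptyset$. The genuinely degenerate possibilities $X=\emptyset$ and $X=M$ I would dispatch at the outset: there $[X]_\boxminus=\{X\}$, so all three stated answers coincide.

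With the claim in hand I would finish as follows. Take an arbitrary $Y\in[X]_{E(\boxminus)}$, so $Y\boxminus X$ and $\mathcal M\models E(X,Y)$, and suppose $Y\neq X$ and $Y\neq M-X$; then $X\triangle Y\neq\emptyset$ and $M-(X\triangle Y)\neq\emptyset$ (using that $X\triangle Y=M$ is equivalent to $Y=M-X$), so the claim yields $\neg(|X\triangle Y|\vartriangleleft|X|)$ and $\neg(|M-(X\triangle Y)|\vartriangleleft|X|)$, whence $\neg E(X,Y)$ by either (\ref{e:septight}) or (\ref{e:comptight})---a contradiction. Thus $[X]_{E(\boxminus)}\subseteq\{X,M-X\}$. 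Since $X$ always lies in the slice by reflexivity, it remains only to decide membership of $M-X$, which requires $M-X\boxminus X$; when that holds one computes $X\triangle(M-X)=M$, so $M-(X\triangle(M-X))=\emptyset$ gives $|M-(X\triangle(M-X))|\vartriangleleft|X|$ and hence $\mathcal M\models E(X,M-X)$ under (\ref{e:comptight}) (it fails under (\ref{e:septight})). Therefore $[X]_{E(\boxminus)}$ is $\{X\}$ or $\{X,M-X\}$, which together with the trivial case exhausts the three possibilities; the remainder is routine bookkeeping with symmetric differences and complements, requiring no further ideas.
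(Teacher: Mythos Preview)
Your proposal is correct and follows essentially the same approach as the paper: the paper's proof is the one-line observation that for $X$ finite in the metatheory, $|A|\vartriangleleft|X|$ holds iff $|A|=0$, applied to $A=X\triangle Y$ and to $A=M-(X\triangle Y)$ (equivalently to $X\cap Y$ and $M-(X\cup Y)$), after which the three cases drop out of Theorem~\ref{thm:main-0formal} exactly as you describe. Your write-up just makes the bookkeeping explicit (degenerate cases, the contrapositive argument for the key claim, and the verification that $M-X$ appears under (\ref{e:comptight})), none of which departs from the paper's route.
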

\begin{proof}
If $X$ is finite in the metatheory then $|X\triangle Y|\vartriangleleft |X|$ if and only if $|X\triangle Y|=0$.  Likewise for $|X\cap Y|, |M-(X\cup Y)|\vartriangleleft |X|$.
\end{proof}

\subsection{The Classification Theorem~\ref{thm:main-formal}}\label{ssec:2ndclthm}

%\marginpar{paragraph rewritten} 
To prove  Theorem~\ref{thm:main-formal} we need to use \ref{cwf} to address some  complications lingering due to our use of non-standard semantics.
We show that under \ref{cwf} all proper separations and proper complementations  on finite concepts are the finest possible, even if ``finite'' only means ``finite in $\mathcal M$''.   Theorem~\ref{thm:main-formal} follows from the theorem proved in this section.  

\begin{defn}%\marginpar{Gross changes}
    Suppose that $E$ is properly separative on $[X]_{\boxminus}$.  Then $J\in \mathcal M$ is called a \emph{measure} of $E$ on $[X]_\boxminus$ just if for all $Y, Z\in [X]_\boxminus$, 
    \[ E(Y, Z) \Leftrightarrow |Y\triangle Z| \vartriangleleft |J| \]
    
    Suppose that $E$ is properly complementative on $[X]_{\boxminus}$.  Then $J\in \mathcal M$ is called a \emph{measure} of $E$ on $[X]_\boxminus$ just if for all $Y, Z\in [X]_\boxminus$, 
    \[ E(Y, Z) \Leftrightarrow |Y\triangle Z| \vartriangleleft |J| \; \textrm{or} \; |M-(Y\triangle Z)| \vartriangleleft |J|\]
    
\end{defn}

\begin{thm}\label{lem:nonstandard}
    Let $\mathcal M \models |X|<\omega$.  If $E$ is nontrivial at $[X]_\boxminus$ then its measure is $\emptyset$.  
\end{thm}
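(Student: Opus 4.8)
The plan is to deduce this from \ref{cwf}, by showing that nontriviality forces $E$ restricted to $[X]_\boxminus$ to be as fine as it can be---the identity relation in the separative case, and identity-together-with-complementation in the complementative case---which is precisely the assertion that the measure collapses to $\emptyset$. By Theorem~\ref{thm:main-0formal}, since $\mathcal{M}\models|X|<\omega$ and $E(\boxminus)_X$ is nontrivial, $E$ is either properly separative or properly complementative on $[X]_\boxminus$, and $X$ itself is already a measure; the task is to push this measure down to $\emptyset$. I will carry out the separative case in detail and then indicate how the complementative case follows by the complementation device of Corollary~\ref{cor:anti1}. Concretely, for the separative case it suffices to prove that no two \emph{distinct} $Y,Z\in[X]_\boxminus$ can satisfy $E(Y,Z)$.

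Suppose toward a contradiction that $E(Y,Z)$ for some distinct $Y,Z\in[X]_\boxminus$. The first step is to produce an $E$-related pair whose symmetric difference has size exactly $2$. Since $|Y|=|Z|$ and $Y\neq Z$ we have $Y-Z\neq\emptyset$, and since $E$ is \emph{properly} separative we have $Z\neq M-Y$, so at least one of $Y\cap Z$ and $M-(Y\cup Z)$ is nonempty. Choosing a transposition $\tau$ that swaps one element of $Z$ with one element outside $Z$ but on the same side of $Y$ (inside $Y$ if $Y\cap Z\neq\emptyset$, otherwise outside $Y\cup Z$), we obtain a permutation fixing $Y$ with $|Z\triangle\tau(Z)|=2$. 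Proposition~\ref{p:fixes} gives $\tau(Z)\in[Y]_E$, and a single application of transitivity in $\mathcal{M}$ yields $E(Z,\tau(Z))$. By \ref{p:perminv}, since \emph{some} pair of symmetric difference $2$ is $E$-related, \emph{every} such pair is.

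The crux---and the main obstacle---is to promote ``all single swaps stay in one class'' to ``all of $[X]_\boxminus$ is one class.'' A naive metatheoretic induction along a chain of swaps from a fixed set to an arbitrary target is unavailable, because in a non-standard model two elements of $[X]_\boxminus$ may differ by a number of swaps that is finite in $\mathcal{M}$ yet non-standard. I would circumvent this with a second, essential appeal to \ref{cwf}. Fix $Y_0\in[X]_\boxminus$ and set $C=[Y_0]_{E(\boxminus)}$; suppose $C\neq[Y_0]_\boxminus$. Applying \ref{cwf} to the definable class (with $Y_0$ as parameter) of those $W\in[Y_0]_\boxminus$ with $\neg E(Y_0,W)$, select such a $W$ with $Y_0\triangle W$ of $\preceq$-minimal cardinality. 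As $W\neq Y_0$, swapping one element of $W-Y_0$ for one of $Y_0-W$ produces $W'\in[Y_0]_\boxminus$ with $|Y_0\triangle W'|=|Y_0\triangle W|-2$ (strictly smaller) and $|W\triangle W'|=2$. Minimality forces $W'\in C$, the previous step gives $E(W,W')$, and transitivity gives $E(Y_0,W)$---contradicting $W\notin C$. Hence $C=[Y_0]_\boxminus$, so $E$ is trivial on $[X]_\boxminus$, contrary to hypothesis.

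This contradiction shows $E$ is the identity on $[X]_\boxminus$, i.e.\ its measure is $\emptyset$. For the complementative case I would run the identical argument on $E^c(Y,Z)\Leftrightarrow E(M-Y,M-Z)$, as in Corollary~\ref{cor:anti1}, or equivalently perform the swap reduction modulo complementation, obtaining $[Y_0]_{E(\boxminus)}=\{Y_0,M-Y_0\}$ and again measure $\emptyset$. Finally, a little care is needed only for degenerate structures in which the transposition of the second paragraph has no room (for instance $M$ finite in $\mathcal{M}$ with $Y\cup Z=M$ and $Y\cap Z=\emptyset$); in the properly separative case the inequality $Z\neq M-Y$ rules this out, and the few remaining small cases are checked directly.
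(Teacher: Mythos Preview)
Your argument is correct and takes a genuinely different route from the paper's own proof. Both proofs hinge on \ref{cwf} to defeat the non-standard-finiteness obstacle, but they apply it to opposite ends of the problem. The paper applies \ref{cwf} to the ``upper bound'' side: it lets $L(\xi)$ say that all $E$-related pairs have symmetric difference (or its complement) bounded by $|\xi|$, takes a $\preceq$-least $J'$ with $L(J')$, invokes Corollary~\ref{cor:finiteMT} to ensure $J'$ is non-standard, extracts a near-boundary pair $U,V$ with $|U\triangle V|=|J'-\{j\}|$, and then perturbs by moving a single element from $U\cap V$ out to produce $U',V'$ still $E$-related but with larger symmetric difference---a contradiction. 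You instead first establish a clean \emph{local} fact (via a single transposition fixing $Y$ and \ref{p:perminv}): any two members of $[X]_\boxminus$ with symmetric difference $2$ are $E$-related. You then apply \ref{cwf} to the ``closest counterexample'' side, minimizing $|Y_0\triangle W|$ over $W$ with $\neg E(Y_0,W)$, and the single-swap fact immediately collapses this minimum.

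Your approach buys a cleaner contradiction and avoids the detour through Corollary~\ref{cor:finiteMT}; the paper's approach handles the separative and complementative cases uniformly in a single formula $L(\xi)$, whereas you must treat the complementative case by a separate (though parallel) reduction. Your suggestion to use $E^c$ as in Corollary~\ref{cor:anti1} does not actually help here, since $(M-Y)\triangle(M-Z)=Y\triangle Z$ forces $E^c=E$ on $[X]_\boxminus$ when $|X|=|M-X|$; but your alternative---running the same swap-and-\ref{cwf} argument under the additional hypothesis $Z\notin\{Y,M-Y\}$ and concluding $[Y_0]_{E(\boxminus)}=\{Y_0,M-Y_0\}$---is the right fix and goes through exactly as in the separative case. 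One small point worth making explicit: your \ref{cwf} instance uses $Y_0$ as a parameter, which is licit since \ref{cwf} is stated for arbitrary $L_0$-formulae $\Phi(X)$ (and the paper's own proof likewise uses $X$ as a parameter in $L(\xi)$).
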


\begin{proof}
    If $E$ is nontrivial on $[X]_{\boxminus}$ then by Theorem~\ref{thm:main-0formal} it is either a proper separation or a proper complementation.  Suppose $J$ is a non-zero measure of $E$ on $[X]_{\boxminus}$.

    Let $L(\xi)$ abbreviate the formula 
    \[ (\forall U, V\in [X]_\boxminus)(E(U,V) \rightarrow ( |U\triangle V|< |\xi| \vee |M-(U\triangle V)|<|\xi|)\]
    
    Clearly $L(J)$, so by \ref{cwf}, there is a smallest $J'$ such that $L(J')$.  If $J'\neq \emptyset$, then observe that by Corollary~\ref{cor:finiteMT} that $|J'|\neq n$ for any natural number $n$---that is, $J$ is not finite in the metatheory.  Thus, letting $j\in J'$  we have that $\neg L(J'-\{j\})$, and so there are $U, V\in [X]_{E(\boxminus)}$ such that 
    \begin{equation}
    \label{e:t6.1}  E(U,V) \; \textrm{but}\; |U\triangle V| \not < |J'-\{j\}|
    \end{equation} (If the witnessing $U,V$ to $\neg L(J')$ satisfy the second disjunct of the consequent, just choose $U, M-V$ so that they satisfy \eqref{e:t6.1}).  As $L(J)$ we have that $|U\triangle V| = |J'-\{j\}|$.
    
    By assumption $E(\boxminus)_X$ is non-trivial, so regardless of whether it is a complementation or a separation, there is a $Z$ with 
    \begin{align}
    U& \neq V & |U\triangle V|&\vartriangleleft |J| & |Z-(U\cup V)| \trianglelefteq |J|  \label{e:t6}
    \end{align}
    As $|X|\geq |J|$, $|U|=|V|=|X|$, and $|U\triangle V|\vartriangleleft |J|$, we have that $|U\cap V|\trianglerighteq |X|$ by Proposition~\ref{p:nonA}.  Thus, with \eqref{e:t6} we have that 
    \begin{equation}
    |U\cap V|, |Z - (U\cup V)| >2
    \end{equation}
    Thus, let $a \in U\cap V$ and $b, c\in Z- (U\cup V)$, and set 
    \begin{align}
    U' &= (U-\{a\})\cup \{b\}   & V' & = (V-\{a\})\cup \{c\} 
    \end{align}
    Note now that as 
    \begin{equation}
    |U\triangle U'| = |V \triangle V'| = 2  \vartriangleleft |J|
    \end{equation}
    we have $E(U, U'), E(V, V')$, and so $E(U', V')$.  Thus, since $L(J)$, \mbox{$|U'\triangle V'| < |J|$}, but it is easy to see that the construction of $U'$ and $V'$ ensure that $|U'\triangle V'| = |J\cup \{a\}| > |J|$
\end{proof}

Theorem~\ref{lem:nonstandard} proves Theorem~\ref{thm:main-formal} for $X$ finite in $\mathcal M$.  For the rest one needs only consider $X$ such that $\mathcal M\models |X|\geq \omega$.  By Proposition~\ref{p:triinf}, if $E(\boxminus)_X$ is separative, then

\begin{equation}
\label{e:ifsep}
\mathcal M\models (\exists X'\subseteq X)(\forall Y,Z\in [X]_\boxminus)(E(Y,Z) \rightarrow |Y\triangle Z|<|X'|) \end{equation}
and likewise if $E(\boxminus)_X$ is complementative.  This completes the proof of Theorem~\ref{thm:main-formal}.

\section{The classification of ``Bad Companions''}\label{sec:badco}  \label{sec:neol}

The ``bad company problem'' faced by neo-logicists is the two part challenge of finding a plausuble criterion for the ``logicality'' of abstraction principles, and showing that the good abstraction principles like {\tt HP} are logical in this sense, while the bad ones like {\tt BLV}, {\tt NP}, and {\tt CP} are not.  It is thus relevant to note how Theorem~\ref{thm:main-formal} and its apparatus bear on the joint consistency of abstraction principles, since at the very least any abstraction principles qualifying as ``logical'' should be jointly consistent.  

Consistency results in the absence of well-behaved cardinalities are difficult to obtain.  As such our discussion will be directed towards the neo-logicist who thinks 
``logical'' abstraction principles must be jointly consistent \emph{in the presence of well-behaved cardinalities}.  We'll thus use ``jointly consistent'' as if
%\marginpar{fixed typo.  ``it'' to ``if''}
 it implicitly has the qualification on the behavior of cardinalities.  

Such a neo-logicist is committed to thinking that joint consistency with {\tt HP} is a necessary condition for the logicality of abstraction principles.  In the remainder of this section we apply Theorem~\ref{thm:main-formal} and its apparatus to discern which abstraction principles are consistent with a Dedekind infinite universe.  These obviously determine which abstraction principles are consistent with {\tt HP}, and thus which must be ruled out from being ``logical''. 

One reason why the bad company problem can appear vexing is that any ``logical'' equivalence relation can give rise to an abstraction principle.  Facing an untamed menagerie of such equivalence relations, the neo-logicist would face an equally wild zoo of abstraction principles.  The import of Theorem~\ref{thm:main-formal} is in taming the menagerie---or better, classifying the species.  At each bicardinal slice of $\mathcal M$, there are only three ``classes'' of equivalence relations on that slice.  This enables us to see clearly why certain abstraction principles have the types of models they do:  at certain (finite or infinite!) bicardinal classes, \emph{non-trivial} equivalence relations distinguish more equivalence classes than there are objects.  

We have said that {\tt BLV}, {\tt NP}, and {\tt CP} all count as bad companions.  We thus have so far two bad companions deploying refinements of separations, and one deploying a refinement of a complementation.  Of the former, {\tt BLV} involves the finer (in fact finest) separation;  as we will see in contexts with \ref{ipm} it is in a sense the paradigm  case of a separative abstraction principle.  We will also discuss the paradigm case of a complementative abstraction principle;  it is not {\tt CP} but a close relative, the \emph{Liberated Complementation Principle} ({\tt LCP}):
\[ (\forall X, Y)(\ell X = \ell Y \leftrightarrow X=Y \vee X = M-Y)\]

\begin{rmk}
The equivalence relation for {\tt BLV} is the finest equivalence relation that refines a separation for every $X\in \mathcal M$ (it is also the finest equivalence relation full stop, see~\cite{Antonelli2010aa}).

The equivalence relation for {\tt NP} refines a separation at all bicardinal slices of $\mathcal M$ for Dedekind infinite sets.  

Finally, the equivalence relation for {\tt LCP} refines a complementation at $X\in \mathcal M$ if $|X|=|M-X|$, and in fact it is the finest equivalence relation refining a complementation at bicardinal slices for such $X$.
\end{rmk}

\subsection{Translation and restricted abstraction principles}\label{ssec:trans}

In the remainder of this section we will need the following definitions:
\begin{defn}
For $\mathcal M \models |X|, \omega \leq |M|=|M-X|$, we will write $Y\leq [X]_\boxminus$ as shorthand for $|Y|<|X| \vee Y\in [X]_\boxminus$.

Let $E$ be an equivalence relation.  Let $\varphi(Y,Z)$ be a formula.  We say that $\varphi$ \emph{is functional below $X$ in $\mathcal M$} if 
\begin{equation}\label{e:interp-uniqdn} \mathcal M\models |X|\leq |M-X| \wedge (\forall Y)(Y\leq [X]_\boxminus \rightarrow 
(\exists! Z)\varphi(Y,Z))\end{equation}
We say that $\varphi$ is functional \emph{at $X$} if 
\begin{equation}\label{e:interp-uniq} \mathcal M\models (\forall Y)(Y\in [X]_\boxminus \rightarrow 
(\exists! Z)\varphi(Y,Z)\end{equation}

Often in the presence of such a $\varphi$ we will use functional notation, using $F_\varphi(Y)$ for the unique object $Z$ such that $\varphi(Y,Z)$, 
%\marginpar{added ``$Z$'', corrected ``$z$'' to ``$Z$''}
dropping the subscript where it is unneeded for understanding.

If $A_E[\partial]$ is an abstraction principle, and $\varphi$ a formula functional below $X$ in $\mathcal M$, then we say that $\theta$ \emph{translates} $A_{E'}(\leeq X)$ \emph{via} $\varphi$ in $\mathcal M$ to mean that 
\begin{equation} \label{e:AEuptoXU} \mathcal M \models (\forall Y, Z)(Y,Z\leq  [X]_\boxminus  \rightarrow 
(\partial(F_\varphi(Y)) = (\partial(F_\varphi(Z)) \leftrightarrow E'(Y,Z)))\end{equation}
If $\varphi$ is functional at $X$, then we say that $A_E[\partial]$ translates $A_{E'}(\eeq X)$ via $\varphi$ in $\mathcal M$  to mean that 
\begin{multline} \label{e:AE=XU} \mathcal M \models (\forall Y, Z)(Y,Z\in [X]_\boxminus \rightarrow  (\partial(F_\varphi(Y)) = \partial(F_\varphi (Z)) \leftrightarrow E'(Y,Z)))\end{multline}
Finally we may say that $A_E[\partial]$ translates $A_{E'}$ in $\mathcal M$ via $\varphi$ just if 
\begin{equation}
\mathcal M \models (\forall Y,Z)(\partial (F_\varphi(Y))=\partial(F_\varphi(Z)) \leftrightarrow E'(Y,Z) )
\end{equation}
%\marginpar{added missing ``$)$''}

These definitions afford us the capacity to talk of the \emph{restricted abstraction principles}\label{p:restap}
$A_E[\partial](\leeq X)$ and $A_E[\partial](\eeq X)$ 
in the sense that $\mathcal M$ satisfies one of these just if ``$(\forall x)(x=x)$'' translates that principle in $\mathcal M$ via ``$Y=Z$''.    
\end{defn}

In what follows of this section we show that if $2<|X|=|M-X|$ and $E(\boxminus)_X$ is non-trivial, then (in the presence of \ref{ipm}) $A_E$
is inconsistent.  The engine of these results will be the following two propositions.

\begin{prop}\label{p:interp-incons}
Let $\varpi$ be an $L_0$-sentence. Suppose that for all $\mathcal M\models A_E[\partial]$, $A_E[\partial] \wedge \varpi$ translates $A_{E'}(\eeq M)$ in $\mathcal M$ via $\varphi$, and that  $A_{E'}(\eeq M)$ and $\varpi$ are jointly inconsistent (in our background logic).  Then $A_E$ and $\varpi$ are  jointly inconsistent (in our background logic) as well.
\end{prop}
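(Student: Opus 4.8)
Proposition~\ref{p:interp-incons} is a transfer-of-inconsistency result. We are told: for every model $\mathcal M$ of $A_E[\partial]$, the theory $A_E[\partial] \wedge \varpi$ translates $A_{E'}(\eeq M)$ in $\mathcal M$ via some fixed formula $\varphi$ (so $\varphi$ is functional at/over all of $M$ in each such model, and equation~(\ref{e:AE=XU})-style translation holds on all of $M$). We also know $A_{E'}(\eeq M) \wedge \varpi$ is inconsistent in the background logic. We must conclude $A_E \wedge \varpi$ is inconsistent.

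**The plan.** The natural strategy is a direct contrapositive/model-theoretic argument: suppose for contradiction that $A_E \wedge \varpi$ is consistent in our background logic, so it has a model $\mathcal M$ satisfying full comprehension and the cardinality assumptions \ref{cc}, \ref{ism}, \ref{ipm}, \ref{cwf}. Since $A_E$ is realized via some abstraction functor $\partial$, this $\mathcal M$ is a model of $A_E[\partial] \wedge \varpi$ for the witnessing $\partial$. By the hypothesis of the proposition, in this $\mathcal M$ the theory $A_E[\partial]\wedge\varpi$ translates $A_{E'}(\eeq M)$ via $\varphi$; that is, writing $F_\varphi$ for the function induced by $\varphi$, equation~(\ref{e:AE=XU}) holds with $[X]_\boxminus$ replaced by all of $M$:
\[
\mathcal M \models (\forall Y,Z)\bigl(\partial(F_\varphi(Y)) = \partial(F_\varphi(Z)) \leftrightarrow E'(Y,Z)\bigr).
\]

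**Extracting a model of the target theory.** The key step is to read off from this translation a genuine model of $A_{E'}[\partial']\wedge\varpi$. I would define a new functor $\partial'$ on concepts of $\mathcal M$ by setting $\partial'(Y) = \partial(F_\varphi(Y))$ (this is a legitimate object-assignment since $\varphi$ is functional over $M$ in $\mathcal M$, so $F_\varphi$ is total and well-defined, and $\partial$ is the functor witnessing $A_E$). The displayed biconditional is then precisely the statement that $(\mathcal M, \partial')$ validates the abstraction principle $A_{E'}[\partial']$, i.e.\ (\ref{e:abstprin}) with $E'$ in place of $E$. Since $\mathcal M \models \varpi$ and $\varpi$ is an $L_0$-sentence whose truth does not depend on the choice of abstraction functor, we get $(\mathcal M,\partial') \models A_{E'}[\partial'] \wedge \varpi$. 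But this contradicts the assumed joint inconsistency of $A_{E'}(\eeq M)$ and $\varpi$ in the background logic, since a model of $A_{E'}[\partial']$ in particular witnesses $A_{E'}(\eeq M)$.

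**The main obstacle.** The delicate point is ensuring that the witness $\partial'$ really yields an \emph{abstraction functor} in the required sense---that is, that $Y \mapsto \partial(F_\varphi(Y))$ lands in the first-order domain and respects the formal shape of an abstraction principle, so that exhibiting it genuinely contradicts the consistency of $A_{E'}$ rather than merely of the translated biconditional. Here I would lean on the definitions of translation and restricted abstraction principle (p.~\pageref{p:restap}): the whole apparatus of ``$A_E[\partial]$ translates $A_{E'}$ via $\varphi$'' was set up precisely so that a model realizing the left side produces a model realizing $A_{E'}$. So the argument is essentially unwinding the definition~(\ref{e:AE=XU}) and recognizing that $\partial \circ F_\varphi$ is the sought functor, together with the observation that $\varpi$'s truth-value transfers unchanged because it is purely $L_0$ and the underlying structure $\mathcal M$ is untouched. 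The care needed is bookkeeping about functionality of $\varphi$ holding on all of $M$ (rather than just $[X]_\boxminus$), which is exactly the hypothesis that $A_E[\partial]\wedge\varpi$ translates $A_{E'}(\eeq M)$, i.e.\ the full-universe version of the restricted translation.
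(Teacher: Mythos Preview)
Your proposal is correct and follows essentially the same route as the paper: assume a model of $A_E\wedge\varpi$, define $\partial' Y = \partial(F_\varphi(Y))$, and observe that the expanded structure models $A_{E'}[\partial'](\eeq M)\wedge\varpi$, contradicting the assumed joint inconsistency. The only point the paper makes explicit that you leave implicit is that the expanded structure $(\mathcal M,\partial,\partial')$ still satisfies the comprehension axioms in the enlarged signature $L_0[\partial']$ (shown by a routine induction on formula complexity), which is needed so that the constructed model really is a model in the sense of the background logic; you should add this sentence to close the argument cleanly.
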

\begin{proof}
If $A_E\wedge \varpi$ is consistent then by the completeness of second-order logic for the non-standard semantics, there is a structure $\mathcal M = (M, S_1[M], \ldots, \partial) \models A_E \wedge \varpi$.  As $\varphi(Y,Z)$ is functional at $M$, define $\partial' Y = \partial (F_\varphi(Y))$;  $\partial'$ is then a function in $\mathcal M$ from concepts to objects.  We now augment $\mathcal M$ to $\mathcal M' = (M, S_1[M], \ldots, \partial, \partial')$, and note that since $A_E$ translates $A_{E'}(\eeq M)$, $\mathcal M' \models A_{E'}[\partial'](\eeq M)$.  Since $\varpi$ is an $L_0$-sentence (with parameters from $\mathcal M$), $\mathcal M' \models \varpi$.  A routine induction on the complexity of formulas shows that $\mathcal M'$ satisfies the comprehension axioms in the expanded language $L_0[\partial']$. %  Working in the metatheorey we show by induction that if $\theta(\overline z)$ is an $L_0[\partial, \partial']$-formula (with parameters from $\mathcal M$) defining $R$, then there is an $L_0[\partial]$-formula $\theta'(\overline z)$ (with likewise parameters) defining $R$, so $R\in S_i[M]$.  This is true of the base cases via 
%\begin{eqnarray}
%\partial' Z = t & \Leftrightarrow & \varphi(Z,z) \wedge z = t \\
%R(\ldots, \partial' Z, \ldots) & \Leftrightarrow & \varphi(Z, z) \wedge R(\ldots, z, \ldots)
%\end{eqnarray}
%The only interesting inductive case is addressed by 
%\begin{eqnarray}
%(\forall Z)\theta(\partial' Z) & \Leftrightarrow & (\forall Z)(\forall z)(\varphi(Z,z) \rightarrow \theta'(z))
%\end{eqnarray}
Thus if $A_E[\partial]\wedge \varpi$ has a model then so does $A_{E'}[\partial'](\eeq M)\wedge \varpi$.
\end{proof}

For the next proposition, observe that if $\mathcal M\models |M|\geq \omega$, then \ref{ipm} ensures that there is a bijection $\langle \cdot, \cdot \rangle : M\times M\rightarrow M$.  We will use this notation in what follows, as well as writing $\langle X,Y\rangle$ to mean $\{ \langle x, y\rangle \mid x\in X, y\in Y\}$.\footnote{The arguments of this section elaborate on the one given in~\cite{ED-2015}.}  
We also obtain:
\begin{prop}\label{p:pairingimage}
For $|M|\geq \omega$, 
%\marginpar{changed beginning of sent.}
let $\langle \cdot, \cdot \rangle:M\times M \rightarrow M$ be a bijection, whose existence is assured by \ref{ipm}.   For $X,Y$, if either 
\begin{equation}
|Y|=1 \leq |X| \leq |M-X|
\end{equation} or 
\begin{equation}
|Y|,\omega \leq |X|\leq |M-X|
\end{equation}
then $\langle Y,X\rangle \in [X]_\boxminus$. 
\end{prop}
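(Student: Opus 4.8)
The plan is to unpack membership in $[X]_\boxminus$ into its two defining cardinal equations and verify each separately. Since $\langle Y, X\rangle \in [X]_\boxminus$ means precisely that $|\langle Y, X\rangle| = |X|$ and $|M - \langle Y, X\rangle| = |M - X|$, and since $\langle \cdot, \cdot\rangle$ is a bijection in $\mathcal M$ carrying $Y \times X$ onto $\langle Y, X\rangle$ and $(M\times M)-(Y\times X)$ onto $M-\langle Y,X\rangle$, I would first pull both equations back to statements about products in $M\times M$. Two preliminary observations handle the bookkeeping. First, in either hypothesis $M$ is infinite and $1\leq |Y|\leq |X|\leq |M-X|$, with $Y$ nonempty, so I may fix some $y_0\in Y$. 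Second, from $|X|\leq |M-X|$ and \ref{ism} one gets $|M|=|M-X|$: if $X$ is infinite this is immediate, and if $X$ is finite then $M-X$ is infinite and $|M|=|X\sqcup(M-X)|\leq |(M-X)\sqcup(M-X)|=|M-X|\leq |M|$, whence equality by the Schr\"oder--Bernstein theorem.

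For the first equation I would show $|Y\times X|=|X|$. When $|Y|=1$ the map $x\mapsto \langle y_0,x\rangle$ is a bijection of $X$ onto $\langle Y,X\rangle$, giving the result directly with no cardinal assumption. When $|X|\geq \omega$ I would sandwich: $|X|=|\{y_0\}\times X|\leq |Y\times X|\leq |X\times X|=|X|$, where the middle inequality uses $|Y|\leq |X|$ and the last equality is \ref{ipm}; the Schr\"oder--Bernstein theorem then yields $|Y\times X|=|X|$.

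For the second equation it suffices to show $|(M\times M)-(Y\times X)|=|M-X|$. The upper bound is immediate, since this set is a subset of $M\times M$ and $|M\times M|=|M|=|M-X|$ by \ref{ipm} and the preliminary observation. For the lower bound, the step worth getting right is the realization that one need not decompose the complement into its several product pieces at all: for every $x\in M-X$ the pair $\langle y_0,x\rangle$ lies outside $Y\times X$ (as $x\notin X$), so $\{y_0\}\times(M-X)\subseteq (M\times M)-(Y\times X)$, and this slice already has cardinality $|M-X|$. The Schr\"oder--Bernstein theorem gives equality.

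Combining the two equations yields $\langle Y,X\rangle \boxminus X$, that is, $\langle Y,X\rangle \in [X]_\boxminus$. I expect no serious obstacle here; the only place requiring care is keeping the degenerate configurations---$X$ finite, or $Y$ as large as $M$---from spoiling the inequalities, and the observation that the single slice $\{y_0\}\times(M-X)$ already witnesses the lower bound neatly sidesteps having to track the sizes of $(M-Y)\times X$ and $(M-Y)\times(M-X)$ separately.
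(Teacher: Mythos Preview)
Your argument is correct. The paper does not actually supply a proof of this proposition---it is stated as something one ``obtains'' directly from \ref{ipm} and the surrounding cardinal machinery---so your write-up is exactly the routine verification the paper leaves to the reader, and it proceeds in the natural way: reduce $\boxminus$ to its two cardinal equations, use the pairing bijection to translate these into statements about $Y\times X$ and its complement in $M\times M$, and sandwich with \ref{ipm}, \ref{ism}, and Schr\"oder--Bernstein. The observation that the single slice $\{y_0\}\times(M-X)$ already witnesses the lower bound for the complement is a clean way to avoid casework.

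One small remark: you tacitly assume $Y$ is nonempty in the second hypothesis, which is indeed required for the conclusion to hold (if $Y=\emptyset$ then $\langle Y,X\rangle=\emptyset\notin[X]_\boxminus$ when $|X|\geq\omega$). This is how the paper applies the proposition as well, so your reading is the intended one; but strictly speaking the stated hypothesis ``$|Y|,\omega\leq|X|$'' does not by itself exclude $Y=\emptyset$, so it is worth noting explicitly that nonemptiness is being used.
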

%\begin{proof} If $|Y|=1\leq |X| \leq |M-X|$, then clearly $|\langle Y, X\rangle |=|X|$.If the other disjunct holds, then \[|\langle Y, X\rangle | = |Y\times X| = |X|\] by \ref{ipm}.  In either case, we have $|X|\leq |M-X|\leq |M-Y|$ with $|M-X|=|M|$.  So,
%\[ |\langle M-Y, M-X\rangle | = |M-Y \times M- X| = |M| = |M-X| \]
%\end{proof}

\begin{lem}\label{lem:refinterp}
Let $\mathcal M \models |X|\geq \omega$.  If $\mathcal M \models |X|\leq |M-X|$ and $E(\boxminus)_X$ refines a separation, then $A_E$ translates ${\tt BLV}(\leeq X)$ via  
\begin{equation}\label{e:translation}
\varrho(Y,Z) := Z = \langle Y, X\rangle
\end{equation}

Under the same conditions, if $E(\boxminus)_X$ refines a complementation (and does not refine a separation),
%\marginpar{added parenthetical condition}
 then $A_E$ translates ${\tt LCP}$ via 
\begin{multline}\label{e:translation2}
\varphi(Y,U) := (\exists V)((
(\partial\emptyset \in Y \wedge V = \langle Y, X\rangle )\vee 
(\partial\emptyset \in M-Y \wedge V = \langle M-Y , X\rangle))\\
 \wedge ((V\neq \langle M, X\rangle \wedge V=U) \vee (V=\langle M, X\rangle \wedge U=\emptyset)))
\end{multline}
%\marginpar{changed linebreaks in \eqref{e:translation2}}
\end{lem}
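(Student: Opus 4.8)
The plan is to handle the separative and complementative cases separately, in each reducing the claim to the cardinal arithmetic of the pairing map $\langle\cdot,\cdot\rangle$ together with the appropriate refinement clause of Definition~\ref{def:trivsepcompl0}. Since $\mathcal M\models A_E[\partial]$ we have $\partial U=\partial V\Leftrightarrow E(U,V)$, so every ``translates'' assertion reduces to a statement about which pairs of pairing-images are $E$-related. Two facts will be used repeatedly: first, $\langle Y,X\rangle\triangle\langle Z,X\rangle=\langle Y\triangle Z,X\rangle$; and second, by \ref{ipm} (with the cardinal assumptions), $|\langle W,X\rangle|=|X|$ whenever $\emptyset\neq W$ with $|W|\leq|X|$, while $|\langle W,X\rangle|\geq|X|$ whenever $W\neq\emptyset$, since a single ``column'' $\{w\}\times X$ already has size $|X|$. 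Because $\mathcal M\models|X|\geq\omega$, Proposition~\ref{p:triinf} lets me read ``$\vartriangleleft|X|$'' in Definition~\ref{def:trivsepcompl0} as ``$<|X|$ in $\mathcal M$''.

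I would first isolate an auxiliary fact used in both cases: if $E(\boxminus)_X$ is non-trivial, then $\neg E(\emptyset,W)$ for every $W\in[X]_\boxminus$. Indeed, if $E(\emptyset,W)$ held then, for any $W'\in[X]_\boxminus$, there is a permutation $\pi\in\mathcal M$ witnessing $W\boxminus W'$ (so $\pi(W)=W'$), and $\pi(\emptyset)=\emptyset$; hence \ref{p:perminv} gives $E(\emptyset,W')$, and transitivity of $E$ would collapse all of $[X]_\boxminus$ into a single $E$-class, contradicting non-triviality. Both hypotheses of the lemma are proper, hence non-trivial, in the regime at issue, so this applies.

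For the separative case, note that $\varrho$ is functional below $X$ with $F_\varrho(Y)=\langle Y,X\rangle$, and that by Proposition~\ref{p:pairingimage} we have $\langle Y,X\rangle\in[X]_\boxminus$ whenever $\emptyset\neq Y\leq[X]_\boxminus$. For nonempty $Y,Z$ the right-to-left direction of the translation equation for ${\tt BLV}(\leeq X)$ is just reflexivity of $E$, while left-to-right runs: $E(\langle Y,X\rangle,\langle Z,X\rangle)$ together with separativity forces $|\langle Y\triangle Z,X\rangle|<|X|$, and since a nonempty $Y\triangle Z$ would make this cardinality $\geq|X|$, we conclude $Y=Z$. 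The case in which one of $Y,Z$ is empty is dispatched by the auxiliary fact, as there we compare $\emptyset$ with a member of $[X]_\boxminus$. This gives the first assertion.

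For the complementative case I would first observe that proper complementation at $[X]_\boxminus$ forces $|X|=|M-X|$: by the Remark following Definition~\ref{def:trivsepcompl0} it requires $M-Y\in[X]_\boxminus$ for each $Y\in[X]_\boxminus$, which is possible only when $|X|=|M-X|$, whence $|X|=|M-X|=|M|$ by \ref{ism}. The formula $\varphi$ selects the canonical representative $Y^\ast\in\{Y,M-Y\}$ containing $\partial\emptyset$, and sets $F_\varphi(Y)=\langle Y^\ast,X\rangle$ unless $Y^\ast=M$, in which case $F_\varphi(Y)=\emptyset$; one checks this is total and functional, that $\langle Y^\ast,X\rangle\in[X]_\boxminus$ for every $Y^\ast\neq\emptyset,M$ (Proposition~\ref{p:pairingimage} and the cardinal assumptions), and that $Y=Z\vee Y=M-Z$ holds exactly when $Y^\ast=Z^\ast$. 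For $Y^\ast,Z^\ast\neq M$ the equation reduces to $E(\langle Y^\ast,X\rangle,\langle Z^\ast,X\rangle)\Leftrightarrow Y^\ast=Z^\ast$, and the main obstacle is the left-to-right direction: complementativity yields only $|\langle Y^\ast\triangle Z^\ast,X\rangle|<|X|$ \emph{or} $|M-\langle Y^\ast\triangle Z^\ast,X\rangle|<|X|$, so I must rule out the second disjunct when $Y^\ast\neq Z^\ast$. This is precisely what choosing representatives containing $\partial\emptyset$ secures: then $\partial\emptyset\in Y^\ast\cap Z^\ast$, so $\partial\emptyset\notin Y^\ast\triangle Z^\ast$, and hence the entire column $\{\partial\emptyset\}\times X$ of size $|X|$ lies in $M-\langle Y^\ast\triangle Z^\ast,X\rangle$, making the second disjunct impossible; the first disjunct then forces $Y^\ast=Z^\ast$. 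The remaining subcases, where one of $Y^\ast,Z^\ast$ equals $M$ so the corresponding image is $\emptyset$, reduce to comparing $\emptyset$ with a member of $[X]_\boxminus$ and are settled by the auxiliary fact. Together these establish that $A_E$ translates ${\tt LCP}$ via $\varphi$.
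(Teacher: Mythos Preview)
Your proposal is correct and follows essentially the same strategy as the paper's proof: both reduce to the cardinal arithmetic of the pairing images and invoke the non-triviality of $E(\boxminus)_X$ to handle the $\emptyset$-cases, and both use the choice of representatives containing $\partial\emptyset$ in the complementation case. If anything, your treatment is slightly more explicit about \emph{why} that choice kills the second disjunct of the complementation condition (via the column $\{\partial\emptyset\}\times X$), whereas the paper asserts the corresponding cardinality implications more tersely.
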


\begin{proof}
For both assertions, note:
By hypothesis, $\mathcal M\models |M|\geq |X|\geq\omega$, so by \ref{ipm} the bijection $\langle \cdot, \cdot \rangle$ exists. Notice that since $E(\boxminus)_X$ is non-trivial, $\partial\emptyset \neq \partial Y$ for any $Y\in [X]_\boxminus$, since otherwise by \ref{p:perminv} $E(\boxminus)_X$ would be trivial, as $E(\emptyset, X)$ holds whenever $E(\emptyset, f(X))$ holds for $|X|=|f(X)|$ and $|M-X|=|f(M-X)|$. Further if this 
%\marginpar{added ``this''; paren.~comment} 
$Y\neq \emptyset$ then $\langle Y, X\rangle \in [X]_\boxminus$ by Proposition~\ref{p:pairingimage} (under the conditions of the second assertion this follows since  $|X|=|M-X|=|M|$.)

We prove the first assertion in brief:  It is easy to verify that $\varrho$ is functional (and so is functional below $X$).  Let $|Y|, |Z|\leq |X|$.  %\marginpar{added last sentence.}

For distinct $Y,Z\neq \emptyset$,
%\marginpar{changed beginning of paragraph} 
$|\langle Y, X\rangle \triangle \langle Z, X\rangle | = |X|$ by \ref{ipm}, so if $Y\neq Z$ then since $E(\boxminus)_X$ is a separation, $\neg E(\langle Y, X\rangle ,\langle Z, X \rangle )$, and so by $A_E$, $ 
\partial \langle Y, X\rangle  \neq \partial \langle Z, X\rangle 
$.  Conversely if $Y=Z$ then $\langle Y, X\rangle = \langle Z, X \rangle $.  Since $E$ is an equivalence relation, it follows that $E(\langle Y, X\rangle, \langle Z, X\rangle  )$, and so, by $A_E$, that $\partial\langle Y,X\rangle  = \partial\langle Z,X\rangle$.

For the second assertion, %let $a\neq \langle \partial Y, \partial Y\rangle$ for any $Y\in S_1[M]$.  
%we break up the definition of $\varphi$ into two:
%\begin{equation}
%\psi(Y,V) : =  
%(\partial\emptyset \in Y \wedge V = \langle Y, X\rangle) \vee  
%(\partial\emptyset \in M-Y \wedge \langle M-Y, X\rangle))
%\end{equation}
%and
%\begin{equation}
%\varsigma(V,v) : = (V\neq M \wedge  \partial V= v) \vee (V=M \wedge \partial \emptyset =v )
%\end{equation}
it is easy to verify that $\varphi$ is functional (and so is functional below $X$).  Towards verifying the consequent of (\ref{e:AEuptoXU}) and working in $\mathcal M$, we need only show that 
\begin{equation}\label{e:LCPtrans} \partial U_\varphi (Y) = \partial U_\varphi (Z) \leftrightarrow Y=Z \vee Y=M-Z \end{equation}
%\marginpar{$X$ to $Z$ in eq., rem. extra parens.} 
for all concepts $Y,Z \in \mathcal M$.

By the definition of $U_\varphi$, $U_\varphi(\emptyset) = U_\varphi(M) = \emptyset$.  So clearly if $X$ and $Y$ are chosen from $\emptyset, M$, then (\ref{e:LCPtrans}) obtains.

Now, if $Y=\emptyset$ then $U_\varphi(Y)=\emptyset$, and if $Y=M$ then $U_\varphi(Y) = \emptyset$.  On the other hand if $Y\neq \emptyset, M$, then $U_\varphi(Y) \in [X]_\boxminus$. 
%\marginpar{corrected ``$V_\varphi(Y)$'' to ``$U_\varphi(Y)$''} by Proposition~\ref{p:pairingimage}. So if $Y\neq \emptyset, M$, then $\neg E(\emptyset,U_\varphi(Y))$ by \ref{p:perminv}    
Thus by $A_E[\partial]$, $\partial (U_\varphi(Y)) \neq \partial(U_\varphi(\emptyset))=\partial(U_\varphi(M))$.  Thus:
\begin{gather}
\emptyset \neq Z \rightarrow \partial(U_\varphi \emptyset) \neq \partial(U_\varphi Z)\\
M \neq Z \rightarrow \partial (U_\varphi M) \neq \partial(U_\varphi Z)
\end{gather}
Now if $Y,Z\neq \emptyset, M$ then we have 
\begin{gather}
Y\neq Z \rightarrow |U_\varphi(Y)\triangle U_\varphi(Z)| = |X| \\
Y \neq M-Z \rightarrow |M-(U_\varphi(Y)\triangle U_\varphi(Z))| = |X|
\end{gather}  

So, since $E(\boxminus)_X$ refines a complementation we have that  
%\marginpar{corr. ``separation'' to ``complementation''}
\begin{equation}\label{e:translate-necc}
\partial(U_\varphi(Y)) = \partial(U_\varphi (Z)) \rightarrow Y=Z \vee Y = M-Z
\end{equation}
by $A_E[\partial]$.

Clearly by the construction of $U_\varphi$: 
%\marginpar{corrected ``$V$ to ``$U_\varphi$}
\begin{gather}
Y=Z \rightarrow E(U_\varphi(Y),U_\varphi(Z)) \\
Z\neq \emptyset \neq Y= M-Z \rightarrow E(U_\varphi(Y), U_\varphi(Z)) 
\end{gather}
%\marginpar{$\Rightarrow$ to $\rightarrow$ in eqs.}
So we have, again by $A_E[\partial]$ and the $E(\boxminus)_X$ refining a complementation:
\begin{equation}
\label{e:translate-suff}
Y = Z \vee Y = M-Z \rightarrow \partial(U_\varphi(Y)) = \partial(U_\varphi (Z))
\end{equation}
Which completes the proof of the translation of 
${\tt LCP}$.  
\end{proof}

We know now some of which abstraction principles translate restrictions of {\tt BLV} and {\tt LCP}.  We now show when, in the presence of \ref{ipm}, these restrictions are inconsistent for each such principle.  Each of the following two subsections will conclude by applying Lemma~\ref{lem:refinterp};  the final subsection applies Theorem~\ref{thm:main-formal} to unify those results.

\subsection{Refinements of separations and ${\tt BLV}(\eeq X)$}

The utility of Lemma~\ref{lem:refinterp} is in relating equivalence relations that are non-trivial on bicardinal slices to what might be called their prime examples.  In this section we treat {\tt BLV} and its restrictions as the prime examples of separations, and show that in infinite structures, refinements of separations translate certain restrictions of ${\tt BLV}$.  The next section proves analogous results for {\tt LCP} and its restrictions.  

From the Lemma we obtain very quickly:

\begin{cor}\label{cor:BLv-inf}
Suppose $\mathcal M\models |X|= |M-X|=|M|$.  Then $\mathcal M\not \models {\tt BLV}(\eeq X)$.  
\end{cor}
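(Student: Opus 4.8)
The plan is to unwind the definition of the restricted principle and then force a Cantor-style contradiction internal to $\mathcal M$. Unwinding the translation conventions, $\mathcal M\models{\tt BLV}(\eeq X)$ amounts to the existence of an abstraction operator $\varepsilon$ expanding $\mathcal M$ (preserving comprehension in the enriched signature $L_0[\varepsilon]$) whose restriction to the slice $[X]_\boxminus$ is injective, since {\tt BLV} is $A_=$ and co-extensiveness of concepts is just identity. First I would record that the hypothesis forces $|M|\geq\omega$: were $M$ finite in $\mathcal M$, finite cardinal arithmetic (a theorem of second-order logic) would give $|M|=|X|+|M-X|=|M|+|M|$, whence $|M|=0$, contradicting $M\neq\emptyset$. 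So I may assume $X$ is infinite in $\mathcal M$ and argue for a contradiction from the existence of such an $\varepsilon$.

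The key step is to inject the full second-order power set $S_1[M]$ into the slice $[X]_\boxminus$ by an $L_0$-definable map, so that composing with $\varepsilon$ yields an injection of $S_1[M]$ into $M$. Using \ref{ism} I would split $X$ into two disjoint pieces $X_0\sqcup X_1$ with $|X_0|=|X_1|=|X|=|M|$, and fix a bijection $\alpha\colon M\to X_0$ (which exists as $|X_0|=|M|$). Define $\Psi(A)=\alpha(A)\cup X_1$ for each $A\in S_1[M]$. Each $\Psi(A)$ lands in $[X]_\boxminus$: it contains $X_1$, so $|\Psi(A)|=|M|=|X|$, while its complement contains $M-X$, so $|M-\Psi(A)|=|M|=|M-X|$, giving $\Psi(A)\boxminus X$. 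Moreover $\Psi$ is injective, since $A=\alpha^{-1}(\Psi(A)\cap X_0)$, and it is $L_0$-definable from the parameters $\alpha,X_0,X_1$, so each $\Psi(A)$ is present in $\mathcal M$ by comprehension. Composing, $F=\varepsilon\circ\Psi\colon S_1[M]\to M$ is an injection whose graph is definable in $L_0[\varepsilon]$.

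Finally I would run the Cantor/Russell diagonal against $F$. By comprehension in $L_0[\varepsilon]$ the concept
\[ R=\{\,x\in M\mid (\exists A)(F(A)=x\wedge x\notin A)\,\} \]
belongs to $S_1[M]$, so $F(R)$ is defined; injectivity of $F$ then yields $F(R)\in R\Leftrightarrow F(R)\notin R$, a contradiction. This refutes the existence of $\varepsilon$ and gives $\mathcal M\not\models{\tt BLV}(\eeq X)$. The main obstacle is bookkeeping rather than ideas: one must design $\Psi$ so that its image is genuinely balanced (lands in $[X]_\boxminus$) for \emph{every} input, which is exactly why the half $X_1$ is thrown in wholesale as padding while $X_0$ carries the injected information; and one must check that the diagonal $R$ is formed by $L_0[\varepsilon]$-comprehension, which is legitimate precisely because the operator was added so as to preserve comprehension in the enriched signature.
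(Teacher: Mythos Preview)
Your argument is correct, and it takes a genuinely different route from the paper's. The paper does not run the Cantor diagonal directly; instead it passes through the translation machinery of Section~\ref{ssec:trans}. Observing that equality refines a separation on $[X]_\boxminus$, it invokes Lemma~\ref{lem:refinterp} (which uses the pairing function coming from \ref{ipm}) to translate ${\tt BLV}[\varepsilon_1](\leeq X)$ and ${\tt BLV}[\varepsilon_2](\leeq M-X)$, splices these together via bijections $M\approx X$ and $M\approx M-X$ into a translation of \emph{unrestricted} {\tt BLV}, and then appeals to Proposition~\ref{p:interp-incons} and the inconsistency of {\tt BLV} itself. Your argument cuts out this detour: you inject $S_1[M]$ into the slice $[X]_\boxminus$ by padding with a fixed half $X_1$ of $X$, compose with $\varepsilon$, and diagonalize in one step. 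This needs only \ref{ism} (to split $X$) rather than \ref{ipm}, and it does not depend on Lemma~\ref{lem:refinterp} or the translation apparatus. You also make explicit the point---left implicit in the paper---that the hypothesis $|X|=|M-X|=|M|$ forces $|M|\geq\omega$. What the paper's route buys is uniformity: the same translation framework is reused for the complementation case (Lemma~\ref{lem:refinterp}, Proposition~\ref{p:comp-incons}) and elsewhere in Section~\ref{sec:badco}, so the indirect proof slots into a larger pattern. For this corollary in isolation, your argument is cleaner.
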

\begin{proof}
Given $X$ as in the hypothesis, if $M \models {\tt BLV}(\eeq X)$ then $E(\boxminus)_X$ is a separation, and so refines a separation.  Thus ${\tt BLV}(\eeq X)$ translates both ${\tt BLV}[\varepsilon_1](\leeq X)$ and ${\tt BLV}[\varepsilon_2](\leeq M-X)$.  With $M\stackrel{f}{\approx}X$ and $M\stackrel{g}{\approx}M-X$, set 
\[ \varepsilon Y = \begin{cases}
					f(\varepsilon_1 Y) & |Y|\leq X \\
					g(\varepsilon_2 Y) & |M-Y|<|M-X|
\end{cases}\]
Clearly then ${\tt BLV}(\eeq X)$ and $|X|=|M-X|=|M|$ translate {\tt BLV}, which is inconsistent.  Thus {\tt BLV} and $|X|=|M-X|=|M|$ are jointly inconsistent, and by Proposition~\ref{p:interp-incons}, ${\tt BLV}(\eeq X)$ and $|X|=|M-X|=|M|$ are jointly inconsistent, which was to be demonstrated.  
\end{proof}

The sitation is no better for ${\tt BLV}(\eeq X)$ if $M$ is finite (in $\mathcal M$):
\begin{lem}\label{p:BLV-fin}
Suppose $\mathcal M \models 0< |X|<|M|<\omega$.  Then $\mathcal M\not\models {\tt BLV}(\eeq X)$.
\end{lem}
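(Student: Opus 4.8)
The plan is to derive a contradiction from the fact that, for $0<|X|<|M|$, there are strictly more concepts bicardinally equivalent to $X$ than there are objects in $M$ — the finite shadow of the Cantorian phenomenon that already dooms full {\tt BLV}. Suppose $\mathcal M\models{\tt BLV}(\eeq X)$. Unwinding the restricted principle, its abstraction operator $\varepsilon$ satisfies $\varepsilon Y=\varepsilon Z\leftrightarrow Y=Z$ for all $Y,Z\in[X]_\boxminus$; by comprehension in the language expanded with $\varepsilon$ this restriction is a genuine injection $[X]_\boxminus\to M$ lying in $\mathcal M$, so $|[X]_\boxminus|\leq|M|$ in $\mathcal M$. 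Because $\mathcal M\models|M|<\omega$, for concepts $Y$ we have $Y\boxminus X$ exactly when $|Y|=|X|$ (the complement clause is then automatic), so $[X]_\boxminus$ is precisely the collection of subconcepts of $M$ of the same size as $X$. It therefore suffices to show that this collection cannot inject into $M$.

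First I would settle the case in which $M$ is also finite in the metatheory: then $[X]_\boxminus$ has $\binom{|M|}{|X|}$ members, and $\binom{|M|}{|X|}>|M|$ whenever $1<|X|<|M|-1$, contradicting $|[X]_\boxminus|\leq|M|$ at once. To reach the same conclusion without assuming metatheoretic finiteness I would work inside $\mathcal M$ and manufacture a non-surjective self-injection of $M$, which is impossible since $\mathcal M\models|M|<\omega$. Fixing as a parameter a subconcept $B\subset M$ with $|B|=|X|-1$, the assignment $a\mapsto B\cup\{a\}$ injects $M-B$ into $[X]_\boxminus$ and enumerates exactly the $|X|$-subconcepts containing $B$; composing with $\varepsilon$ gives an injection of $M-B$ into $M$ whose range omits $\varepsilon(S)$ for every $|X|$-subconcept $S$ not containing $B$, and such $S$ exist precisely because $1<|X|<|M|-1$. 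Using \ref{cc} and the Schr\"oder--Bernstein theorem to compare $|M-B|$, $|M|$, and $|[X]_\boxminus|$, I would upgrade this to a self-injection of $M$ that misses a point, the desired contradiction.

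The hard part is carrying out this last step uniformly. The hypothesis $\mathcal M\models|X|<\omega$ asserts only that $X$ is Dedekind finite, so in the non-standard semantics $X$ — and hence the parameter $B$ — may be infinite in the metatheory; one cannot extend the partial map above over the points of $B$ by finitely many hand-made choices, as one could in a genuinely finite structure. The work is thus to produce a single parameter-definable injection $M\to[X]_\boxminus$ and to verify its injectivity and its failure of surjectivity using only the internal arithmetic of $\mathcal M$ — in particular the cardinal inequalities of Proposition~\ref{p:nonA} together with Cardinal Comparability — rather than any appeal to metatheoretic counting. (The extreme cases $|X|=1$ and $|X|=|M|-1$, where the slice is merely equinumerous with $M$, are the delicate ones and need separate care; in the intended applications $2<|X|=|M-X|$, so $1<|X|<|M|-1$ and this difficulty is moot.)
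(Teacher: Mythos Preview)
Your strategy is sound but you stop short of executing it: the ``upgrade'' from an injection $M-B\to M$ to a non-surjective self-injection of $M$ is promised but never produced, and the edge cases $|X|=1$, $|X|=|M|-1$ are flagged and then set aside as moot. As written you have a plausible sketch for $1<|X|<|M|-1$ and no argument at the boundary; since the lemma is stated for all $0<|X|<|M|$, that is a genuine gap.

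The paper's route is close to yours in spirit but tighter in execution, and it sidesteps both of your difficulties at once. Rather than estimate $|[X]_\boxminus|$ globally, it fixes $x_0\in X$, $a_0\notin X$, and writes down \emph{two} explicit families in $[X]_\boxminus$: the sets $a_0+X_x:=(X-\{x\})\cup\{a_0\}$ for $x\in X$, and the sets $a+X_{x_0}$ for $a\notin X$. Your $B\cup\{a\}$ construction is exactly this second family with $B=X-\{x_0\}$; what you are missing is the dual family obtained by varying the \emph{removed} element instead of the added one. The two families have sizes $|X|$ and $|M-X|$, meet in the single element $a_0+X_{x_0}$, and together with $X$ itself account for exactly $|M|$ pairwise distinct concepts in $[X]_\boxminus$. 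All of this is elementary finite arithmetic carried out inside $\mathcal M$, so your worry about metatheoretic versus internal counting never arises. The overflow value forcing the contradiction is then $\varepsilon\emptyset$, which the paper argues (via \ref{p:perminv}) is distinct from every $\varepsilon Y$ with $Y\in[X]_\boxminus$, yielding $|M|+1$ distinct values in $M$. This last move is precisely what handles your edge cases: at $|X|=1$ or $|X|=|M|-1$ the slice $[X]_\boxminus$ has exactly $|M|$ elements, so the extra value \emph{must} come from outside the slice, and $\emptyset$ supplies it.
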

\begin{proof}
Let $X$ be given, and since $|M|>1$, we establish the following abbreviations: for $x \in X$ and $a\not\in X$, 
\begin{align}
X_x & = X-\{x\}\\
a+X_x &= X_x \cup \{a\}  
\end{align}
Notice that for all $x\in X$ and $a\not\in X$, $|a+X_x|\in [X]_\boxminus$ but $a+X_x \neq X$.  Thus if $\mathcal M \models {\tt BLV}(\eeq X)$, then $\varepsilon X \neq \varepsilon(a+X_x)$.  Moreover, for distinct $x,y\in X$, $a+X_x$ and $a+X_y$ are distinct (thus so are their abstracts), and for distinct $a,b\not\in X$, $a+X_x$ and $b+X_x$ are distinct (thus so are their abstracts).

Now $|\{a+X_x\mid x\in X\}|=|X|$ for each $a\not\in X$, and $|\{a+X_x\mid a\not \in X\}|=|M-X|$ for each $x\in X$.  Further for fixed $x_0\in X, a_0\not\in X$, 
\[ \{a_0+X_x\mid x\in X\}\cap  \{a+X_{x_0}\mid a\not \in X\} = \{a_0+X_{x_0}\} \]
Lastly note that by Permuation Invariance, $\varepsilon \emptyset \neq \varepsilon X, \varepsilon (a+X_x)$ for any $a\not\in X, x\in X$.  Thus, and since $\mathcal M\models |M|<\omega$, and letting
\begin{align}
A &= \{\varepsilon (a_0+X_x)\mid x\in X\} & B & = \{\varepsilon(a+X_{x_0})\mid a\not \in X\}
\end{align}
we have 
\begin{eqnarray}
 |M|& = &|M-X| + |X| \\
&=&   |A| + |(B - \{\varepsilon(a_0+X_{x_0})\}) \cup \{\varepsilon X\}| \\
&=& |A\cup B \cup \{\varepsilon X\}| \\
& < & |A\cup B \cup \{\varepsilon X\}| + |\{\varepsilon \emptyset\}| \\
& \leq & |\rng{\varepsilon}|\leq |M|
 \end{eqnarray}
which means that $|M|<|M|$, a contradiction.
\end{proof}
The preceding results on refinements of separations and on restrictions of {\tt BLV} yield:
\begin{cor}
\label{cor:sep-BLV}
If $E(\boxminus)_X$ refines a separation  and $\mathcal M \models |X|=|M-X|$, then  $\mathcal M \not \models A_E$.
\end{cor}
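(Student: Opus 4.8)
The plan is to assume toward a contradiction that $\mathcal M \models A_E$ while $E(\boxminus)_X$ refines (properly, i.e.\ non-trivially) a separation and $\mathcal M \models |X| = |M-X|$, and to derive a contradiction by reducing to the two inconsistency results already in hand, Corollary~\ref{cor:BLv-inf} and Lemma~\ref{p:BLV-fin}. Two preliminary observations drive the reduction. First, since $M$ is non-empty and $|X|=|M-X|$, we have $|X|\geq 1$ and $X \boxminus (M-X)$; hence $M-X \in [X]_\boxminus$, the slices coincide, $E(\boxminus)_{M-X} = E(\boxminus)_X$, and so $E(\boxminus)_{M-X}$ refines a separation as well. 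Second, the proofs of Corollary~\ref{cor:BLv-inf} and Lemma~\ref{p:BLV-fin} never use anything special about ${\tt BLV}(\eeq X)$ beyond the fact that its equivalence relation refines a separation at $[X]_\boxminus$; those arguments should therefore transfer to an arbitrary $A_E$ once we know $E(\boxminus)_X$ refines a separation.

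First I would split on whether $\mathcal M \models |M|\geq\omega$. In the infinite case, since $M = X \sqcup (M-X)$ would be finite in $\mathcal M$ if both $X$ and $M-X$ were, at least one—hence, being equal, both—of $X, M-X$ is infinite in $\mathcal M$, so \ref{ism} gives $|X|=|M-X|=|M|\geq\omega$. Now Lemma~\ref{lem:refinterp} applies to $X$ (using $|X|\leq|M-X|$) and, by the first observation, equally to $M-X$, so $A_E$ translates both ${\tt BLV}(\leeq X)$ and ${\tt BLV}(\leeq M-X)$. This is exactly the situation engineered in the proof of Corollary~\ref{cor:BLv-inf}: with bijections $M \stackrel{f}{\approx} X$ and $M \stackrel{g}{\approx} M-X$ one glues the two translated abstraction functions into a single injection of all concepts into $M = X \sqcup (M-X)$, sending concepts $Y\leq [X]_\boxminus$ (the small ones and those bicardinal to $X$) into $X$ via $f$ and concepts with strictly smaller complement into $M-X$ via $g$, yielding a genuine {\tt BLV}-abstraction in an expansion of $\mathcal M$. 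As {\tt BLV} is inconsistent, this contradicts $\mathcal M \models A_E$. In effect this is Corollary~\ref{cor:BLv-inf} with $A_E$ in place of ${\tt BLV}(\eeq X)$, legitimate precisely because that proof invoked only that $E(\boxminus)_X$ refines a separation.

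Next I would treat the finite case $\mathcal M \models |M| < \omega$, where $0 < |X| = |M-X| < |M| < \omega$ and Lemma~\ref{lem:refinterp} is unavailable. Here I would appeal to the finite part of Theorem~\ref{thm:main-formal}: since $\mathcal M \models |X| < \omega$ and $E(\boxminus)_X$ is non-trivial and separative, $[X]_{E(\boxminus)}$ is neither $[X]_\boxminus$ (non-triviality) nor $\{X, M-X\}$ (separativity, since $E(X,M-X)$ would force $|M| = |X\triangle(M-X)| \vartriangleleft |X|$, which is impossible as $|X|\leq|M|$), so $[X]_{E(\boxminus)} = \{X\}$; by \ref{p:perminv} the same holds at every $Y \in [X]_\boxminus$. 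Thus for distinct $Y,Z\in[X]_\boxminus$ we have $\neg E(Y,Z)$ and so $\partial Y \neq \partial Z$, while $\partial\emptyset \neq \partial Y$ for $Y\in[X]_\boxminus$ by the non-triviality argument already used in Lemma~\ref{lem:refinterp}. Hence $\partial$ restricted to $[X]_\boxminus$ is a {\tt BLV}-abstraction, i.e.\ $\mathcal M \models {\tt BLV}(\eeq X)$, which is exactly what Lemma~\ref{p:BLV-fin} forbids for $0<|X|<|M|<\omega$; the contradiction completes this case and hence the proof.

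The main obstacle, and the only genuinely new ingredient, is the finite-in-$\mathcal M$ case: there the pairing-based translation of Lemma~\ref{lem:refinterp} fails outright (it needs $|X|\geq\omega$), so the reduction to {\tt BLV} cannot proceed through the same channel, and one must instead extract the finest-separation behavior directly from the finite classification of Theorem~\ref{thm:main-formal} and feed it into Lemma~\ref{p:BLV-fin}. Making precise that ``$A_E$ with $[Y]_{E(\boxminus)}=\{Y\}$ for all $Y\in[X]_\boxminus$'' really does witness ${\tt BLV}(\eeq X)$—so that Lemma~\ref{p:BLV-fin} applies verbatim—is the delicate bookkeeping step; the infinite case, by contrast, is essentially a citation of Corollary~\ref{cor:BLv-inf} once one notices that only the separation hypothesis was used there.
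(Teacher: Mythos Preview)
Your proposal is correct and follows essentially the same route as the paper: split on whether $X$ is Dedekind finite in $\mathcal M$, use Theorem~\ref{thm:main-formal} in the finite case to force $[Y]_{E(\boxminus)}=\{Y\}$ for all $Y\in[X]_\boxminus$ (hence ${\tt BLV}(\eeq X)$) and invoke Lemma~\ref{p:BLV-fin}, and use Lemma~\ref{lem:refinterp} in the infinite case. The only organizational difference is that in the infinite case the paper cites Corollary~\ref{cor:BLv-inf} through the translation machinery of Proposition~\ref{p:interp-incons} (translating $A_E$ to ${\tt BLV}(\eeq X)$ and then appealing to its inconsistency), whereas you inline that corollary's gluing argument directly from $A_E$; this is a harmless shortcut, not a different idea.
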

\begin{proof}
Corollary~\ref{cor:BLv-inf} and Lemma~\ref{p:BLV-fin} imply that (in the presence of \ref{ipm}) \begin{equation}
\mathcal M \models |X|=|M-X|\rightarrow \neg{\tt BLV}(\eeq X) \label{e:BLV}
\end{equation}  If $E(\boxminus)_X$ is a separation and $\mathcal M \models |X|<\omega$ then by Theorem~\ref{thm:main-formal} $\mathcal M \models {\tt BLV}(\eeq X)$ if $\mathcal M \models A_E$.  But this contradicts (\ref{e:BLV}). 

On the other hand, if $\mathcal M \models |X|\geq\omega$, then by Lemma~\ref{lem:refinterp}, $A_E$ translates ${\tt BLV}(\eeq X)$.  So $A_E$ cannot hold in $\mathcal M$ as again this contradicts (\ref{e:BLV}).
\end{proof}

\subsection{Refinements of complementations and ${\tt LCP}(\eeq X)$}\label{ssec:LCP}

The situation is quite similar for ${\tt LCP}$.  Here we handle the finite case first:  
\begin{lem}\label{p:complsmall}  
\begin{gather}
\label{p:complsmall-=X} {\tt LCP}(\eeq X) \models |X|=|M-X|<\omega \rightarrow |M|\leq 4 \\
\label{p:complsmall-unrest} {\tt LCP} \models |M|<\omega  \rightarrow |M|\leq 2
\end{gather}
\end{lem}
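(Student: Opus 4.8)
The plan is to prove both parts by a direct pigeonhole count, exploiting that {\tt LCP} makes its functor $\ell$ exactly two-to-one. The structural fact I would establish first is that, since $M$ is nonempty, no concept equals its own complement: $Y = M - Y$ would give $\emptyset = Y \cap (M - Y) = Y$, hence $Y = \emptyset$ and $M - Y = M$, so that $Y = M - Y$ forces $M = \emptyset$, a contradiction. Thus complementation is a fixed-point-free involution, and on any family of concepts closed under complementation the functor $\ell$ is exactly two-to-one: the right-to-left direction of {\tt LCP} collapses each pair $\{Y, M - Y\}$ to a single abstract, while the left-to-right direction keeps distinct pairs apart. Since $\ell$ takes concepts to first-order objects, the number of abstracts it yields on such a family is at most $|M|$.

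For (\ref{p:complsmall-unrest}), I would assume $\mathcal M \models {\tt LCP} \wedge |M| = n < \omega$ and apply the count to the family of all subsets of $M$, which has $2^n$ members and is closed under complementation. This produces exactly $2^{n-1}$ distinct abstracts, all lying in $M$, so $2^{n-1} \leq n$. Since $M$ is nonempty, $n \geq 1$, and a glance at small cases together with the observation that $2^{n-1} > n$ for every $n \geq 3$ (immediate by induction, as $2^{n} = 2 \cdot 2^{n-1} > 2n \geq n + 1$) forces $n \leq 2$.

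For (\ref{p:complsmall-=X}), I would assume $\mathcal M \models {\tt LCP}(\eeq X)$ with $|X| = |M - X| = k < \omega$, so $|M| = 2k$. The relevant family is now $[X]_\boxminus$, which under these cardinality constraints is precisely the collection of all $k$-element subsets of $M$ (since $|M| = 2k$ makes $|Y| = k$ equivalent to $|M - Y| = k$); it is closed under complementation and has $\binom{2k}{k}$ members. The same two-to-one count yields $\tfrac{1}{2}\binom{2k}{k}$ distinct abstracts in $M$, giving $\binom{2k}{k} \leq 4k$. Checking $k = 1$ ($2 \leq 4$) and $k = 2$ ($6 \leq 8$), and noting that $\binom{2k}{k}$ outgrows $4k$ for every $k \geq 3$, forces $k \leq 2$ and hence $|M| = 2k \leq 4$.

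The one step that needs genuine care is the exactness of the factor of two. I would make sure to check that the complement map really is fixed-point-free on the family in question and that it maps $[X]_\boxminus$ into itself, and that the ``only if'' direction of {\tt LCP}(\eeq X) separates distinct complement-pairs \emph{within} $[X]_\boxminus$ --- so that no two such pairs are accidentally assigned a common abstract. Granting this, both parts reduce to the elementary inequalities $2^{n-1} \leq n$ and $\binom{2k}{k} \leq 4k$, each settled by inspection for small arguments and a one-line monotonicity argument thereafter.
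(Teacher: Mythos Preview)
Your argument is correct and takes a genuinely different route from the paper. You exploit the global fact that $\ell$ is exactly two-to-one on a complement-closed family and then run a pigeonhole count, reducing both parts to the elementary inequalities $2^{n-1}\leq n$ and $\binom{2k}{k}\leq 4k$. The paper instead builds specific small families of concepts whose $\ell$-values are pairwise distinct: for (\ref{p:complsmall-unrest}) the singletons $\{x\}$ together with $\emptyset$ give $|M|+1$ distinct abstracts (since for $|M|>2$ no singleton is the complement of another), contradicting Dedekind-finiteness; for (\ref{p:complsmall-=X}) the ``one-swap'' sets $(X-\{a\})\cup\{b\}$ for $(a,b)\in X\times(M-X)$ give $|X|\cdot|M-X|$ distinct abstracts, contradicting a separately proved inequality $|X\cup(M-X)|<|X\times(M-X)|$ for Dedekind-finite unions with both factors exceeding $2$.

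Your approach is cleaner and makes the two-to-one structure do all the work; the paper's is more constructive. One point worth flagging in the paper's setting: here $|M|<\omega$ means Dedekind-finite \emph{in $\mathcal M$}, not finite in the metatheory, and the semantics is Henkin. Your counts ``$2^n$ subsets'' and ``$\binom{2k}{k}$ $k$-subsets'' tacitly treat $n,k$ as standard naturals and count concepts, which is a third-order move; in a non-standard Dedekind-finite model $S_1[M]$ need not contain every metatheoretic subset, so the exact formulas are not immediately available internally. The paper's explicit families are definable with parameters (hence present by comprehension), and the needed size comparisons are provable for any Dedekind-finite cardinal. Your argument is easily repaired along these lines---indeed the singletons-plus-$\emptyset$ family is just a sparse witness to your global count---but as written it leans on $n,k$ being standard.
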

Here, abusing notation,
%\marginpar{added ``abusing notation''} 
``$\models$'' indicates the derivability relation in second-order logic with well-behaved cardinalities.

We begin with the following:
\begin{prop}\label{p:more2}
	The following is a theorem of second-order logic:
	\begin{equation}
	(\forall X, Y)(|X|>2, |Y|\geq 2 \rightarrow |X\cup Y|\leq  |X\times Y|)
	\end{equation}
	with a strict inequality in the consequent if $X\cup Y$ is Dedekind finite.
\end{prop}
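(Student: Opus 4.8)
The plan is to prove the inequality by exhibiting an explicit injection witnessing $X\cup Y\preceq X\times Y$; since we are working in plain second-order logic we cannot appeal to cardinal arithmetic and must build the witnessing function directly (it will then lie in $\mathcal M$ by comprehension). First I would fix, using the hypotheses, three distinct elements $a,b,c\in X$ (available since $|X|>2$) and two distinct elements $p,q\in Y$ (available since $|Y|\geq 2$). These five parameters are all the construction needs.

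The injection $\Psi\colon X\cup Y\to X\times Y$ I would define by cases: send each $x\in X$ to $(x,p)$; send each $z\in Y- X$ with $z\neq p$ to $(a,z)$; and, should $p\in Y- X$, send $p$ to $(b,q)$. Injectivity is routine to check: the $X$-part lands in the ``column'' $X\times\{p\}$, the generic $Y- X$-part lands in the ``row'' $\{a\}\times(Y-\{p\})$ (second coordinate $\neq p$), and the single exceptional value $(b,q)$ is distinct from both since $b\neq a$ and $q\neq p$. This already yields $|X\cup Y|\leq |X\times Y|$ for all $X,Y$ meeting the hypotheses, including the infinite case. The key feature, used next, is that the point $(c,q)$ is never in the range of $\Psi$: its first coordinate $c$ rules out membership in $\{a\}\times(Y-\{p\})$ and in $\{(b,q)\}$, while its second coordinate $q\neq p$ rules out membership in $X\times\{p\}$. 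This is exactly where the hypothesis $|X|>2$ (rather than merely $|X|\geq 2$) is needed.

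For the strict inequality I would argue as follows. Suppose $X\cup Y$ is Dedekind finite and, for contradiction, that $|X\cup Y|=|X\times Y|$, witnessed by a bijection $\beta\colon X\cup Y\to X\times Y$ in $\mathcal M$. Then $\beta^{-1}\circ\Psi$ lies in $\mathcal M$ by comprehension and is an injection of $X\cup Y$ into itself whose range omits $\beta^{-1}(c,q)$, since $(c,q)\notin\rng{\Psi}$; that is, an injection of $X\cup Y$ into a proper subconcept of itself. By the definition of Dedekind infinitude this makes $X\cup Y$ Dedekind infinite, contrary to assumption. Hence $|X\cup Y|<|X\times Y|$.

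I expect the main obstacle to be obtaining a single injection that works uniformly across all sizes of $X$ and $Y$---in particular one that does not case-split on whether $X$ or $Y$ is Dedekind infinite---while simultaneously leaving a \emph{nameable} point of $X\times Y$ outside its range; the asymmetric roles assigned to $a,b,c$ and to $p,q$ are engineered precisely to meet both demands at once. The remaining subtlety is the passage from ``non-surjective injection'' to ``strict inequality'', which is licensed only under the Dedekind-finiteness hypothesis and is handled by the self-map argument above.
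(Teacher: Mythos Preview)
Your proof is correct and follows essentially the same approach as the paper's: both exhibit an explicit injection $X\cup Y\to X\times Y$ that uses one ``row'' and one ``column'' of the product together with a single exceptional value, arranged so that a named point (your $(c,q)$, the paper's $(c,e)$) is omitted from the range, and then derive the strict inequality from the Dedekind-finiteness hypothesis by composing with a putative bijection. The only minor difference is that the paper first assumes $X$ and $Y$ disjoint without loss of generality, whereas you handle the overlap directly by splitting on $Y-X$; your route avoids having to justify that reduction.
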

\begin{proof} %[Proof of Proposition~\ref{p:more2}]
	We assume without loss of generality that $X$ and $Y$ are disjoint, let $a, b, c\in X$ and $d, e\in Y$ be pairwise distinct.  Then define $f:X\cup Y \rightarrow X \times Y$ by 
	\[ f(x) = \begin{cases}
	(a, x) & x\in Y, x\neq d \\
	(x, d) & x\in X \\
	(b, e) & x = d 
	\end{cases} \]
\sloppy
	Clearly $f$ is injective;  it is not surjective since $(c, e)\not\in \rng{f}$.  Further, if $g:X\times  Y\rightarrow X\cup Y$ is a bijection, then $g\circ f$ witnesses that $X\cup Y$ is Dedekind infinite.
\end{proof}

\fussy

\begin{proof}[Proof of Lemma~\ref{p:complsmall}]
	For the first assertion, suppose the antecedent.  
	Reasoning deductively, suppose now that $|M|>4$, then $|X|, |M-X|>2$. %, so by Claim~\ref{p:more2}, $|A\cup B|<|A\times B|$.
	Set $X= A$ and $M-X = B$, then for each $(a, b)\in A\times B$, set $A_b = (A-\{a\})\cup \{b\}$ and $B_a = (B-\{b\}) \cup \{ a\}$.  Now for $(a, b), (c, d)\in A\times B$ distinct (as a pair), we have 
	\begin{align}
	\copyright A_b & = \copyright B_a & \copyright A_d & = \copyright B_c & 
	\copyright A_b & \neq \copyright A_c
	\end{align}
	So $f(x, y) = \copyright A_y= \copyright B_x$ defines an injection from $A\times B$ into $M = A \cup B$, contradicting Proposition~\ref{p:more2}.  Thus, $|M|\leq 4$.

	For the second assertion, suppose $|M|>2$.  Then $f(x) = \ell \{ x\}$ shows that 
	for $\rng{\ell}_1 = \{ \ell X \mid |X|=1\}$, $|\rng{\ell}_1| = |M|$ and so by finiteness $\rng{\ell}_1 = M$.  However for any $m\in M$, $\ell \{ m\} \neq \ell \emptyset$ by {\tt LCP} so $\ell \emptyset \not \in \rng{\ell}_1 = M$, a contradiction.
\end{proof}

Lemma~\ref{p:complsmall} would make it appear that ${\tt LCP}(\eeq X)$ must have mostly structures of infinite size.  But it has \emph{no} structures of infinite size:

\begin{prop}\label{p:comp-incons}
In the presence of \ref{ipm}, ${\tt LCP}(\eeq X)$ implies the universe is Dedekind finite.
\end{prop}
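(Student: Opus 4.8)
The plan is to argue by contraposition: I assume $\mathcal M\models{\tt LCP}(\eeq X)$ and, for contradiction, that $M$ is Dedekind infinite, and I refute the principle by manufacturing from the pairing bijection of \ref{ipm} an \emph{injection of all of $S_1[M]$ into $M$}, against which Cantor's diagonal argument then yields an absurdity. Throughout I work under the standing assumption of this section that $2<|X|=|M-X|$; together with $\mathcal M\models|M|\geq\omega$ this gives $|X|=|M-X|=|M|\geq\omega$ by \ref{ism}, so \ref{ipm} furnishes a bijection $\langle\cdot,\cdot\rangle:M\times M\to M$.

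First I would set up a total, injective coding of concepts by members of $[X]_\boxminus$. Fixing distinct $p_0,p_1\in M$, for each concept $Y$ put $c(Y)=\langle Y,\{p_0\}\rangle\cup\langle M,\{p_1\}\rangle$; by comprehension $c(Y)\in S_1[M]$, it is never empty, and it is injective in $Y$, since the two unioned pieces are disjoint (their second coordinates are $p_0\neq p_1$) and so $Y$ is recovered as the set of first coordinates of $c(Y)\cap\langle M,\{p_0\}\rangle$. As $c(Y)\neq\emptyset$ with $|c(Y)|\leq|M|=|X|$ and $\omega\leq|X|\leq|M-X|$, Proposition~\ref{p:pairingimage} places $\langle c(Y),X\rangle$ in $[X]_\boxminus$. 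I then define $J(Y)=\ell\langle c(Y),X\rangle$, a total function on $S_1[M]$.

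The key step is that $J$ is injective. For $Y\neq Y'$ the concepts $\langle c(Y),X\rangle$ and $\langle c(Y'),X\rangle$ are distinct (as $c$ is injective and $X\neq\emptyset$) and, crucially, \emph{not complementary}: every element of a concept of the form $\langle A,X\rangle$ has second coordinate in $X$, whereas $M-\langle A',X\rangle$ always contains elements whose second coordinate lies in the nonempty set $M-X$, and no such element lies in $\langle A,X\rangle$; hence $\langle c(Y),X\rangle\neq M-\langle c(Y'),X\rangle$. Both concepts lying in $[X]_\boxminus$, the defining biconditional of ${\tt LCP}(\eeq X)$ then forces $\ell\langle c(Y),X\rangle\neq\ell\langle c(Y'),X\rangle$, i.e. $J(Y)\neq J(Y')$.

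Finally I would diagonalize. Because $\ell$ belongs to the signature, comprehension in the expanded language produces the concept $D=\{m\in M:(\exists Y)(m=J(Y)\wedge m\notin Y)\}\in S_1[M]$. Applying $J$ to $D$ itself and using that $J(D)=J(Y)$ implies $Y=D$ by injectivity, one obtains $J(D)\in D\leftrightarrow J(D)\notin D$, a contradiction. This refutes ${\tt LCP}(\eeq X)$ under $|M|\geq\omega$, establishing the claim. I expect the only delicate points to be the verification that the product-concepts never collide under complementation, so that ${\tt LCP}(\eeq X)$ genuinely separates them, and the legitimacy of forming $D$ by comprehension in the language containing $\ell$; the padding by $\langle M,\{p_1\}\rangle$ is present precisely to keep $J$ total with inputs inside $[X]_\boxminus$, so that the diagonal concept $D$ is itself a legitimate argument of $J$.
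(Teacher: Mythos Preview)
Your argument is correct and takes a genuinely different route from the paper's. The paper proceeds by \emph{translation}: it defines $\varepsilon Y$ to be $f(\ell Y)$ or $g(\ell Y)$ according to whether $\ell\emptyset\in Y$ (with $f,g$ bijections onto $X$ and $M-X$ respectively), thereby showing that ${\tt LCP}(\eeq X)\wedge|M|\geq\omega$ translates ${\tt BLV}(\eeq X)$, and then invokes the already-established Corollary~\ref{cor:BLv-inf} together with Proposition~\ref{p:interp-incons}. Your argument instead goes directly to Cantor: the pairing function plus the padding trick yields a total injection $J:S_1[M]\to M$, since the codes $\langle c(Y),X\rangle$ are pairwise distinct and (as you observe) never complementary, so ${\tt LCP}(\eeq X)$ separates their $\ell$-values; comprehension in the signature containing $\ell$ then furnishes the diagonal concept and the contradiction. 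The paper's route is more modular, reusing the ${\tt BLV}$ machinery and keeping the section unified under the translation framework, while yours is self-contained and exposes the Cantorian mechanism directly; both ultimately exploit the same phenomenon, namely that a sufficiently fine abstraction on $[X]_\boxminus$ manufactures too many objects.
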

\begin{proof}
Given \ref{ipm}, if $|M|\geq\omega$, there is a concept $X$ such that $|X|=|M-X|=|M|$.  We show that ${\tt LCP}(\eeq X) \wedge |M|\geq \omega$ translates ${\tt BLV}(\eeq X)$, so then by Corollary~\ref{cor:BLv-inf} and Proposition~\ref{p:interp-incons}, the former is inconsistent. 

Let $f:M\rightarrow X$ and $g:M\rightarrow M-X$ be bijections, and define $\varepsilon$ by 
\[ \varepsilon Y =  \begin{cases}
f(\ell Y) & \ell \emptyset \in Y \\
g(\ell Y) & \ell \emptyset \not\in Y
\end{cases} \]

Clearly $\varepsilon$ is functional at $X$.  Now if $Y,Z\in [X]_\boxminus$,  $\ell \emptyset \in Y, Z$ and $Y\neq Z$, then by ${\tt LCP}(\eeq X)$ and the injectivity of $f$, $\varepsilon Y \neq \varepsilon Z$.  If $\ell \emptyset \in Y-Z$, then as $f(M)\cap g(M)=\emptyset$, $\varepsilon Y \neq \varepsilon Z$.  Lastly if $\ell\emptyset \not\in Y, Z$, then by ${\tt LCP}(\eeq X)$ and the injectivity of $g$, $\varepsilon Y \neq \varepsilon Z$.  
\end{proof}

Together with Lemmata~\ref{lem:refinterp} and~\ref{p:complsmall}, from Proposition~\ref{p:comp-incons} we obtain 
\begin{cor}\label{cor:comp-CP} 
If $\mathcal M\models |X|=|M-X|$ and $E(\boxminus)_X$ refines a complementation, then 
$\mathcal M \models A_E  \rightarrow |M|\leq 4$.
\end{cor}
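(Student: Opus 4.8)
The plan is to run the complementative analogue of the proof of Corollary~\ref{cor:sep-BLV}, with ${\tt LCP}(\eeq X)$ playing the role that ${\tt BLV}(\eeq X)$ plays there. So I would suppose $\mathcal M \models A_E \wedge |X| = |M-X|$ with $E(\boxminus)_X$ refining a complementation, working throughout in the presence of \ref{ipm}, and argue by contradiction that $|M| > 4$ is impossible. Since $|M| > 4$ gives $|M| > 2$, the exclusivity clause of Theorem~\ref{thm:main-0formal} puts $E(\boxminus)_X$ into exactly one of the profiles trivial, properly separative, or properly complementative; and because it refines a complementation --- which under $|X| = |M-X|$ and $|M| > 2$ is incompatible with the trivial profile --- it is either properly separative or properly complementative.

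In the properly separative branch I would invoke Corollary~\ref{cor:sep-BLV}, which under $|X| = |M-X|$ yields $\mathcal M \not\models A_E$, contradicting the standing assumption. For the properly complementative branch with $\mathcal M \models |X| < \omega$, the last clause of Theorem~\ref{thm:main-formal} forces $[X]_{E(\boxminus)} = \{X, M-X\}$ (non-triviality excluding $[X]_\boxminus$ and non-separativity excluding $\{X\}$), so for all $Y, Z \in [X]_\boxminus$ one has $E(Y,Z) \leftrightarrow (Y = Z \vee Y = M-Z)$. Reading this through $A_E$ gives $\partial Y = \partial Z \leftrightarrow (Y = Z \vee Y = M-Z)$ on $[X]_\boxminus$, i.e.\ $\mathcal M \models {\tt LCP}(\eeq X)$ with $\partial$ as abstraction operator. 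Since $\mathcal M \models |X| = |M-X| < \omega$ and our logic has well-behaved cardinalities, the first assertion of Lemma~\ref{p:complsmall} forces $|M| \leq 4$, contradicting $|M| > 4$.

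The remaining branch is $E(\boxminus)_X$ properly complementative with $\mathcal M \models |X| \geq \omega$. Here $|X| = |M-X|$ supplies $|X| \leq |M-X|$, so the complementation clause of Lemma~\ref{lem:refinterp} applies and $A_E$ translates ${\tt LCP}$ --- hence, a fortiori, ${\tt LCP}(\eeq X)$ --- via the formula $\varphi$ of (\ref{e:translation2}). I would then set $\partial' Y = \partial(F_\varphi(Y))$ and expand $\mathcal M$ to $\mathcal M' = (\mathcal M, \partial')$ exactly as in the proof of Proposition~\ref{p:interp-incons} (the same induction shows comprehension persists, and the cardinality assumptions are untouched because the domain $M$ is unchanged), producing a model $\mathcal M' \models {\tt LCP}(\eeq X)$ with $\mathcal M' \models |M| \geq \omega$; this contradicts Proposition~\ref{p:comp-incons}. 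As every profile leads to a contradiction when $|M| > 4$, we conclude $|M| \leq 4$.

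The main obstacle is the bookkeeping in the infinite branch. One must peel off the separative case via Corollary~\ref{cor:sep-BLV} before invoking Lemma~\ref{lem:refinterp}, since that lemma's complementation clause is only licensed when $E(\boxminus)_X$ refines a complementation but not a separation; and one must pass from ``$A_E$ translates ${\tt LCP}(\eeq X)$'' to an actual model of ${\tt LCP}(\eeq X)$ through the conservative expansion of Proposition~\ref{p:interp-incons}, rather than by naive substitution, so that Proposition~\ref{p:comp-incons} applies to $\mathcal M'$. The only other delicate point is the exclusion of the trivial profile in the first paragraph, which follows from the classification of Theorem~\ref{thm:main-0formal} --- for metatheoretically finite $X$ directly from Corollary~\ref{cor:finiteMT}, which collapses $\vartriangleleft |X|$ to the condition of being empty. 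Everything else reduces to quoting Theorem~\ref{thm:main-formal} and Lemma~\ref{p:complsmall} in their respective branches.
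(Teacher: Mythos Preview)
Your proof is correct and follows essentially the same two-case split (finite vs.\ infinite $|X|$) as the paper: in the finite case you deduce ${\tt LCP}(\eeq X)$ from Theorem~\ref{thm:main-formal} and invoke Lemma~\ref{p:complsmall}; in the infinite case you use Lemma~\ref{lem:refinterp} to translate ${\tt LCP}$ and then apply Proposition~\ref{p:comp-incons}. One small point: by Definition~\ref{def:trivsepcompl0}, ``refines a complementation'' already means \emph{properly} complementative (non-trivial and non-separative), so your separative branch via Corollary~\ref{cor:sep-BLV} is vacuous---you may drop it and the opening classification argument, and start directly with the properly complementative hypothesis.
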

\begin{proof}
Propositions~\ref{p:complsmall} and~\ref{p:comp-incons} together give that 
\begin{equation}
\label{e:comp-implic}
\mathcal M\models |X|=|M-X|\geq 4 \rightarrow \neg {\tt LCP}(\eeq X)
\end{equation}
If $\mathcal M \models |X|<\omega$ and $E(\boxminus)_X$ refines a complementation, then by Theorem~\ref{thm:main-formal} $\mathcal M \models {\tt LCP}(\eeq X)$ if $\mathcal M\models A_E$.  But then (\ref{e:comp-implic}) gives that $|X|=|M-X|<4$.

On the contrary if $\mathcal M\models |X|\geq \omega$, then as $E(\boxminus)_X$ refines a complementation,
%\marginpar{corrected ``separation'' to ``complementation''} by Lemma~\ref{lem:refinterp} $A_E$ translates ${\tt LCP}(\eeq X)$. This with the infinite size of $X$ in $\mathcal M$ contradict (\ref{e:comp-implic}), so $\mathcal \neg A_E$, and the result holds trivially.
\end{proof}

\subsection{Applying the bicardinal classification}

Having dealt individually with each kind of non-trivial equivalence relation, we now apply Theorem~\ref{thm:main-formal}, for putting together Corollaries~\ref{cor:sep-BLV} and~\ref{cor:comp-CP} with Theorem~\ref{thm:main-formal} gives the following:
\begin{thm}\label{thm:BC}
If $\mathcal M \models 2<|X|=|M-X|$ and $E(\boxminus)_X$ is nontrivial, then $\mathcal M \not\models A_E$.\end{thm}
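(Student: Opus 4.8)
The plan is to derive Theorem~\ref{thm:BC} as an almost immediate consequence of the classification Theorem~\ref{thm:main-formal} together with Corollaries~\ref{cor:sep-BLV} and~\ref{cor:comp-CP}. The hypothesis $\mathcal M\models 2<|X|=|M-X|$ places us exactly in the regime those corollaries address, namely concepts equinumerous with their complements. Since Corollary~\ref{cor:sep-BLV} already delivers $\mathcal M\not\models A_E$ outright in the separative case, the only substantive point to check is whether the weaker size bound $|M|\leq 4$ furnished in the complementative case is actually violated under our hypothesis.

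First I would verify that the hypothesis forces $\mathcal M\models|M|>4$. As $X$ and $M-X$ partition $M$, we have $|M|=|X\sqcup(M-X)|$. If $X$ is finite in $\mathcal M$, then $|X|=|M-X|>2$ gives $|X|\geq 3$, so $|M|=2|X|\geq 6$ by finite cardinal arithmetic in $\mathcal M$; if instead $X$ is infinite in $\mathcal M$, then \ref{ism} gives $|M|=\max(|X|,|M-X|)=|X|\geq\omega$. Either way $\mathcal M\models|M|>4$, and in particular $|M|>2$, so Theorem~\ref{thm:main-formal} applies and, since $E(\boxminus)_X$ is nontrivial by hypothesis, $E(\boxminus)_X$ is either properly separative or properly complementative.

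In the properly separative case, $E(\boxminus)_X$ refines a separation, so Corollary~\ref{cor:sep-BLV}, together with $\mathcal M\models|X|=|M-X|$, yields $\mathcal M\not\models A_E$ directly. In the properly complementative case, $E(\boxminus)_X$ refines a complementation, so Corollary~\ref{cor:comp-CP}, again using $\mathcal M\models|X|=|M-X|$, gives $\mathcal M\models A_E\rightarrow|M|\leq 4$. Combining this with the bound $\mathcal M\models|M|>4$ established above, $\mathcal M\models A_E$ would force $|M|\leq 4<|M|$, a contradiction; hence $\mathcal M\not\models A_E$ in this case as well. This exhausts the two nontrivial profiles and completes the argument.

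The main obstacle is conceptual rather than computational: nearly all the labor has already been discharged inside Corollaries~\ref{cor:sep-BLV} and~\ref{cor:comp-CP}, so the one point genuinely demanding care is the interaction between finiteness in $\mathcal M$ and finiteness in the metatheory. Specifically, one must confirm that $\mathcal M\models|M|>4$ holds regardless of whether $X$ is finite in $\mathcal M$, so that the bound $|M|\leq 4$ arising in the complementative case is genuinely contradicted rather than merely suspicious; the finite/infinite split carried out above is designed precisely to secure this.
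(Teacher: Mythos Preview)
Your proof is correct and follows essentially the same route as the paper: invoke Theorem~\ref{thm:main-formal} to reduce to the separative and complementative cases, then dispatch these via Corollaries~\ref{cor:sep-BLV} and~\ref{cor:comp-CP} respectively. The paper's proof is terser, simply citing ``our assumption that $|M|>4$'' without spelling out the finite/infinite split you carry out; your explicit verification that $2<|X|=|M-X|$ forces $|M|>4$ is a welcome addition rather than a deviation.
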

\begin{proof}
By Theorem~\ref{thm:main-formal}, if $E(\boxminus)_X$ is not trivial it either refines a separation or refines a complementation.  In the former case $A_E$ is inconsistent by Corollary~\ref{cor:sep-BLV}, and in the latter case $A_E$ is inconsistent by Corollary~\ref{cor:comp-CP} and by our assumption that $|M|>4$.
\end{proof}
\begin{rmk}
The converse of Theorem~\ref{thm:BC} fails, for let $A_E$ be the abstraction principle {\tt NewV}:
\[ (\forall X, Y)(\varsigma X = \varsigma Y \leftrightarrow (|X|=|Y|=|M| \vee (|X|,|Y|<|M|\nd X=Y)) \] 
But by K\"onig's theorem, {\tt NewV} is has no standard models whose first-order domain is a singular limit cardinal.\footnote{There is in fact a small error on just this point at~\cite[p.~596]{WalshED2015} in stating the results of~\cite[p.~315]{Shapiro1999ab}:  ``For instance, the claim that the abstraction principle New V is strongly stable is equivalent to the generalized continuum hypothesis.''  The correct statement is this:  {\tt New V} is not strongly stable, and whether it is stable depends on whether $2^\kappa = \kappa^+$ for unboundedly many $\kappa$.  %This error is fully the fault of the current author.  
} 
\end{rmk}

To put Theorem~\ref{thm:BC} in the terms used earlier:  If $M$ is large enough (having greater than four elements), then $\mathcal M$ satisfies no abstraction principle whose equivalence relation is non-trivial on concepts which evenly divide the universe.
So Theorem~\ref{thm:main-formal} gives a mark by which to identify (at least) many bad companions:  these insufferable principles limit the size of the universe by non-trivially carving concepts of maximally large bicardinality.\footnote{Theorem~\ref{thm:BC} is in some ways a companion to Fine's \emph{Characterization} Theorem, mentioned earlier:  that what he calls the \emph{basal} abstraction principle is the finest satisfiable in all infinite domains.  As Theorem~\ref{thm:main-formal} is a deductive analogue of Fine's Classification Theorem, a deductive analogue of his Characterization Theorem can also be obtained from Theorem~\ref{thm:main-formal}, though we leave this for Appendix~\ref{sec:reFine}.}

\section{Classification and relative categoricity}\label{sec:relcat}

In~\cite{WalshED2015} Walsh and Ebels-Duggan introduce a criterion that distinguishes the abstraction principle {\tt HP}.  The so-called ``Julius Caesar problem''\footnote{See \cite[\S~55]{Frege1980}, and for example~\cite{Heck1997} and~\cite[Chapter~14]{Hale2001aa}.} arises from the following fact.  There are abstraction principles, among them {\tt HP}, with the following unfortunate property: for a given base $L_0$-structure, it is possible to interpret $\partial$ in two ways, both satisfying the abstraction principle, but such that the two expanded structures are neither isomorphic nor elementarily equivalent.  That this can be done with the equivalence relation $\approx$ and {\tt HP} was noted by Frege himself~\cite[\S~56, \S\S 66ff]{Frege1980}.  Walsh and Ebels-Duggan showed that though these abstraction principles lack \emph{these} properties, some of them have  weaker but still noteworty equivalence properties.

Abusing notation somewhat, let $A_E[\partial]$ be not only the abstraction principle correlated with $E$, but the theory containing as axioms that abstraction principle and the axioms of our background logic (including those for well-behaved cardinalities).  With similar abuse, we'll use $A_E^2[\partial_1, \partial_2]$ be a theory containing axioms for our background logic  and the abstraction principles $A_E[\partial_1]$ and $A_E[\partial_2]$.  In other words, $A_E^2[\partial_1, \partial_2]$ is a theory with two copies of the abstraction principle $A_E$, one for each abstraction operator.  To introduce Walsh and Ebels-Duggan's weakened equivalence notions we first introduce the relevant notion of an isomorphism between induced models:
\begin{defn}\label{def:induced}
Given a structure $\mathcal M = ( M, S_1[M], S_2[M], \ldots, \partial_1, \partial_2)$ satisfying $A_E^2$, for $i=1,2$, let 
\begin{equation}\label{e:induced}
\mathcal M_i = ( \rng{\partial_i}, S_1[M] \cap P(\rng{\partial_i}), S_2[M] \cap P(\rng{\partial_i}\times \rng{\partial_i}) \ldots, \partial_i) 
\end{equation}
We say that $\mathcal M_i$ is the model \emph{induced} by $\partial_i$.  

As noted in~\cite{WalshED2015}, for $\Gamma$ to be an isomorphism between $\mathcal M_1$ and $\mathcal M_2$ it is sufficient for $\Gamma$ to satisfy the following condition:  For all $X\in S_1[M]\cap P(\rng{\partial_1})$, 
\begin{equation}
\label{e:isomcond}
\Gamma \partial_1 X = \partial_2 (\Gamma X) 
\end{equation}
\end{defn}

\begin{defn}\label{dfn:RCetc}  \sloppy
The theory $A_E$ is \emph{naturally relatively categorical} just if for any model $\mathcal M$, and any $\partial_1,\partial_2$ such that $\mathcal M\models A_E^2[\partial_1,\partial_2]$, the \emph{natural bijection} $\Gamma: \rng{\partial_1} \rightarrow \rng{\partial_2}$, defined by 
\begin{equation}
\label{e:natbij} \Gamma \partial_1 X = \partial_2 X 
\end{equation}
is an isomorphism between the induced models $\mathcal M_1$ and $\mathcal M_2$.  

\fussy
The theory $A_E$ is \emph{relatively categorical} just if for any model $\mathcal M$, and any $\partial_1,\partial_2$ such that $\mathcal M\models A_E^2[\partial_1,\partial_2]$, $\mathcal M_1 \cong \mathcal M_2$.

The theory $A_E$ is \emph{relatively elementarily equivalent} just if for any model $\mathcal M$, and any $\partial_1,\partial_2$ such that $\mathcal M\models A_E^2[\partial_1,\partial_2]$, 
\[ \mathcal M_1 \models \varphi_{\partial_1} \Leftrightarrow \mathcal M_2 \models \varphi_{\partial_2} \] 
for every \emph{sentence} (i.e., closed formula) $\varphi$ in the language of the theory $A_E[\partial]$, and 
$\varphi_{\partial_i}$ is the result of replacing every occurrence of $\partial$ in $\varphi$ with $\partial_i$.
\end{defn}
Thus natural relative categoricity is distinguished from relative categoricity in that the former requires that the natural bijection be an isomorphism, the latter only requires that an isomorphism exist.

One of the main theorems of~\cite{WalshED2015} established necessary and sufficient conditions for the \emph{natural} relative categoricity of an abstraction principle;  some of which we state as:
\begin{NRCThm}[\cite{WalshED2015}]\label{thm:NRCCC}
Let $A_E$ be an abstraction principle.  The following are equivalent:
\begin{enumerate}
\item $A_E$ is naturally relatively categorical.
\item \label{it:ccoa} $A_E[\partial] \models (\forall X,Y)(|Y|=|X|\leq |\rng{\partial}|\rightarrow E(X,Y))$.% \marginpar{Added missing ``$|$''}
\end{enumerate}			
\end{NRCThm}
\noindent where again ``$\models$'' means the deductive consequence relation for our strong background logic.  
Abstraction principles satisfying this second condition are said to be \emph{cardinality coarsening on abstracts}.

As can be seen plainly, $\approx$ is the finest equivalence relation such that $A_E$ is cardinality coarsening on abstracts;  thus {\tt HP} is the finest naturally relatively categorical abstraction principle.  This gives the neo-logicist a criterion by which to distinguish {\tt HP} from other abstraction principles;  notably, the abstraction principle  {\tt NewV}, which is equivalent to ${\tt BLV}(<\!\!|M|)$, is \emph{not} naturally relatively categorical (see~\cite[\S~5]{WalshED2015}).  

But there is more than can be asked.  In~\cite[Proposition~14, p.~1687]{Walsh2012aa} Walsh shows  that {\tt HP} is relatively categorical in the stronger, unqualified sense.  Moreover the foregoing establishes the following implication relations:
\begin{equation}\label{d:RCarrows} \begin{array}{ccc}
\textrm{Natural relative categoricity} & \Leftrightarrow & \textrm{Cardinality coarsening on abstracts}\\
\Downarrow \\
\textrm{Relative categoricity} \\
\Downarrow \\
\textrm{Relative elementary equivalence} 
\end{array}\end{equation}

Walsh and Ebels-Duggan thus raised two questions in~\cite{WalshED2015}:    
\begin{Q}\label{q:RC}
Are the conditions for relative categoricity the same as for natural relative categoricity?  Can we drop the specification of the natural bijection in the~\ref{thm:NRCCC}?  More precisely is (\ref{it:ccoa}) equivalent to the relative categoricity of $A_E$?
\end{Q}

\begin{Q}\label{q:REE}
What is the relation between relative categoricity, natural relative categoricity, and relative elementary equivalence?
\end{Q}

It is a consequence of Theorem~\ref{thm:main-formal} that Question~\ref{q:RC} can be answered in the affirmative, and that, in answer to Question~\ref{q:REE}, all the implication arrows of (\ref{d:RCarrows}) can be reversed.

From here onward it will be helpful to recall the definitions of restricted abstraction principles from page~\pageref{p:restap}.
%\marginpar{Gross changes}

\begin{lem}\label{lem:BLVnotRC}
	Suppose $A_E$ is an abstraction principle, and that $A_E[\partial]$ translates ${\tt BLV}(\leeq{} 1)$ via $\varphi(Y,Z)$. 
	Then $A_E[\partial]$ is not relatively elementarily equivalent, and so not relatively categorical.
\end{lem}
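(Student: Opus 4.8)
The plan is to refute relative elementary equivalence head-on: I will build a single structure $\mathcal M$ carrying two operators $\partial_1,\partial_2$ with $\mathcal M\models A_E^2[\partial_1,\partial_2]$ whose induced models $\mathcal M_1,\mathcal M_2$ disagree on a sentence of the language of $A_E[\partial]$. Since relative categoricity implies relative elementary equivalence along the vertical arrows of (\ref{d:RCarrows}), the same example yields that $A_E[\partial]$ is not relatively categorical, so the ``and so'' clause comes for free.

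The whole argument is driven by unwinding the hypothesis. Reading the translation clause (\ref{e:AEuptoXU}) with $E'$ taken to be coextensiveness (the equivalence relation of ${\tt BLV}$), the map $Y\mapsto \partial(F_\varphi(Y))$ is injective on concepts of size at most one. In particular $s(x):=\partial(F_\varphi(\{x\}))$ is an injection of $M$ into $\rng{\partial}$ and $d_0:=\partial(F_\varphi(\emptyset))$ lies outside its range. This is precisely an internal Dedekind-infinity witness, and it is the slack that permits a genuine relabelling of abstracts. I will take $\partial_1:=\partial$ and $\partial_2:=\sigma\circ\partial_1$ for a bijection $\sigma$ of $M$ that shifts the labels one notch along $s$, so that $\rng{\partial_2}=\rng{\partial_1}-\{p\}$ for a single point $p$. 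Because $\sigma$ is a bijection we have $\partial_2 X=\partial_2 Y \Leftrightarrow \partial_1 X=\partial_1 Y \Leftrightarrow E(X,Y)$, so both operators satisfy $A_E$; the content of the construction is choosing $p$ well.

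The separating sentence is an \emph{internal surjectivity} statement $\Psi:=(\forall x)(\exists Y)(\partial Y=x)$, where in each induced model $Y$ ranges over the concepts of that model, i.e. over subsets of the first-order domain $\rng{\partial_i}$. In $\mathcal M_1$ every point of the domain is the $\partial_1$-image of such an internal concept, so $\mathcal M_1\models\Psi_{\partial_1}$. The point of choosing $p$ via the injection $s$ is that the injectivity coming from ${\tt BLV}(\leeq 1)$ pins down a \emph{unique} small concept whose abstract is $p$, and $\sigma$ is arranged so that this unique witness is not a subset of $\rng{\partial_2}$; since $E$ fixes the equivalence pattern of abstracts, no other internal concept of $\mathcal M_2$ can have $\partial_2$-image $p$ either, so $\mathcal M_2\not\models\Psi_{\partial_2}$. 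Hence $\mathcal M_1\not\equiv\mathcal M_2$.

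The main obstacle is producing the single ambient Henkin model and checking it is legitimate. I must exhibit $\mathcal M$ satisfying $A_E$ together with comprehension and our cardinality assumptions; verify that the two-operator expansion still validates comprehension for $L_0[\partial_1,\partial_2]$, so that $\mathcal M\models A_E^2[\partial_1,\partial_2]$; and confirm that the excluded point $p$ truly has no internal witness in $\mathcal M_2$ while remaining hit in $\mathcal M_1$. It is exactly here that the hypothesis is indispensable: without the ${\tt BLV}(\leeq 1)$-injectivity the $\partial_2$-preimage of $p$ could be a large $E$-class containing a concept that survives inside $\rng{\partial_2}$, and the separation would collapse. The bookkeeping needed to route everything through $F_\varphi$ rather than $\partial$ directly, and to see that $\Psi$ is a sentence of the ambient language, is routine once the translation formula is in hand.
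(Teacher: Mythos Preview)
Your separating sentence does not do the work you need. You arrange $\rng{\partial_2}=\rng{\partial_1}-\{p\}$, but then the first-order domain of $\mathcal M_2$ is $\rng{\partial_2}$, and $p$ simply is not in it. The sentence $\Psi_{\partial_2}$ quantifies only over points of $\rng{\partial_2}$, so the fact that $p$ has no internal $\partial_2$-preimage is irrelevant to whether $\mathcal M_2\models\Psi_{\partial_2}$; to falsify $\Psi_{\partial_2}$ you would need some $q\in\rng{\partial_2}$ with no $\partial_2$-preimage among subsets of $\rng{\partial_2}$, and nothing in your argument produces such a $q$. There is also a construction problem: if $\sigma$ is a bijection of $M$ and $\sigma(\rng{\partial_1})=\rng{\partial_1}-\{p\}$, then $\sigma$ maps $M-\rng{\partial_1}$ onto $(M-\rng{\partial_1})\cup\{p\}$, forcing $M-\rng{\partial_1}$ to be Dedekind infinite---but nothing guarantees this (the natural normalization is $\rng{\partial_1}=M$, which then makes the desired $\sigma$ impossible). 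Finally, you never verify $\mathcal M_1\models\Psi_{\partial_1}$: that every $x\in\rng{\partial_1}$ is $\partial_1 Y$ for some $Y\subseteq\rng{\partial_1}$ is not automatic when $\rng{\partial_1}\subsetneq M$.

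The paper avoids all of this by keeping $\rng{\partial_1}=\rng{\partial_2}=M$, so that $\mathcal M_1$ and $\mathcal M_2$ share their $L_0$-reduct with $\mathcal M$ (in particular $F_\varphi$ means the same thing in both---a point your sketch also elides). It then uses a \emph{fixed-point} sentence $\psi_i:=(\exists a)\bigl(a=\partial_i F_\varphi(\{a\})\bigr)$: if $\psi_1$ fails, a single transposition manufactures a fixed point for $\partial_2$; if $\psi_1$ holds, one bijects the set $A$ of fixed points onto a disjoint set of equal size (using $|A|\leq|\rng{\partial_1}|$ and \ref{ism}/\ref{ipm}) and composes with $\iota$ of that bijection to kill every fixed point. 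Either way $\partial_2$ has the same range as $\partial_1$ but the models disagree on $\psi$.
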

\begin{proof}
	%From the given $\partial_1$ we will construct another $\partial_2$ such that $\mathcal M \models A_E^2[\partial_1, \partial_2]$, and yet $\mathcal M_1$ and $\mathcal M_2$ are not elementarily equivalent, and so not isomorphic.  

	%First note that if $A_E$ translates ${\tt BLV}(= 1)$ then $A_E$ translates ${\tt BLV}(\leq 1)$.  For, let $x\in U(X)$ if and only if $\varphi(X,Y)$ and $x\in Y$, where $\varphi$ translates ${\tt BLV}(=1)$, and suppose that $\partial \emptyset = \partial U(\{ a\})$ for some $a\in M$.  Then $E(\emptyset, U(\{a\}))$, but by permnutation invariance, $E(\emptyset, U(\{b\}))$ for any $b\in M$, via the permutation that switches $a$ and $b$.  But then $E(U(\{a\}), U(\{b\}))$, contrary to the assumption that $\partial U(\{a\}) \neq \partial U(\{b\})$.  It follows that $|M| = |\rng{\partial_1}| \geq \omega$.  
	
	Let $\partial_1$ be given such that $\mathcal M\models A_E[\partial_1]$.  
	Note first that as $A_E$ translates ${\tt BLV}(\leq 1)$, $|\rng{\partial}|=|M|\geq \omega$.  We may thus assume that $\rng{\partial_1}= M$.  The $\partial_2$ we will construct will have the same range; this allow us to avoid worries that the translation $\varphi$ of ${\tt BLV}(\leq 1)$ in $\mathcal M$, which may contain nested quantifiers, may change its behavior on the induced models $\mathcal M_1$ and $\mathcal M_2$.
	
	For $\varphi$ translating as above, let $x\in U(X)$ if and only if $\varphi(X, Y)$ and $x\in Y$.  Consider now \begin{equation}
	\psi_i = (\exists a)(a = \partial_i U(\{a\})) \label{e:disagreesent}
	\end{equation}
	
	On the one hand, if $\mathcal M \models \neg \psi_1$, then select $a\in M$, and note that there are $b, c\in M$ with $b  = \partial_1 U(\{a\})$ and $ a  = \partial_1 U(\{c\})$.  Let $f(b)=a$, $f(a)=b$, and $f$ the identity map on $M-\{a,b\}$.  As $f\in \mathcal M$ since it is definable, setting $\partial_2 X = f(\partial_1X)$ we have that $\mathcal M \models A_E^2[\partial_1, \partial_2]$.  And yet $\mathcal M_2\models \psi_2$.
	
	On the other hand, if $\mathcal M\models \psi_1$, let 
	\begin{equation}\label{e:defAwithoutdivby2}
	A = \{ \partial_1 X\mid (\exists a)(a= \partial_1 X = \partial_1 U(\{a\}))\}
	\end{equation}
	
	Suppose first that $|A|=|\rng{\partial_1}|$.  As $|A|\geq\omega$, there is a $B\subset A$ such that $|B|=|A-B| = |A|$ by \ref{ipm};  let $g: B \rightarrow A-B$ be a bijection.  Then let 
	\begin{equation}\label{e:g-def-nodivby2}
	\partial_2 X = \begin{cases}
	g(\partial_1 X) & \partial_1 X \in B \\
	g^{-1}(\partial_1 X) & \partial_1 X \in g(B) \\
	\partial_1 X & \partial_1 X \not\in A
	\end{cases}
	\end{equation}
	Again we have that $\mathcal M \models A_E^2[\partial_1, \partial_2]$, and $\mathcal M_2\models \neg \psi_2$: for if $\partial_1 U(\{a\}) = a \in B$ then $\partial_2 U(\{a\}) \in A-B$, and so $\partial_2U(\{a\}) \neq a$.  And likewise if $\partial_1U(\{a\}) = a \in A-B$.  
	
	Lastly suppose $|A|<|\rng{\partial_1}|$, then as $|\rng{\partial_1}|\geq \omega$, then by \ref{ism} there is an injection $g:A \rightarrow \rng{\partial_1} - A$.  Setting $B=A$,  the construction as in \eqref{e:g-def-nodivby2} again delivers the verdict that $\mathcal M_2\models \neg \psi_2$.
\end{proof}

Lastly, we prove a parallel to Lemma~\ref{lem:refinterp}.

\begin{lem}
	\label{lem:interp1}
	Suppose $\mathcal M \models 0<|X|<\omega\leq |M|$ and $E(\boxminus)_X$ refines a separation.  Then $A_E$ translates ${\tt BLV}(\leeq 1)$.% with a $\varphi$ satisfying (\ref{e:Ucond}).
\end{lem}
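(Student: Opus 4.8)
The plan is to reuse, almost verbatim, the separation half of the proof of Lemma~\ref{lem:refinterp}, taking as translating formula
\[ \varrho(Y,Z) := Z = \langle Y, X\rangle, \]
where $\langle\cdot,\cdot\rangle:M\times M\rightarrow M$ is the pairing bijection furnished by \ref{ipm} (available since $\mathcal M\models|M|\geq\omega$). The only structural change from Lemma~\ref{lem:refinterp} is that there $X$ was infinite and $Y$ ranged up to $|X|$, whereas here $X$ is finite in $\mathcal M$ and $Y$ ranges only over $\emptyset$ and singletons. The crucial point is that $0<|X|<\omega$ makes $\langle\{a\},X\rangle$ a concept of size exactly $|X|$, which is precisely what places it in $[X]_\boxminus$.

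First I would check that $\varrho$ is functional below the singleton, so that $F_\varrho(Y)=\langle Y,X\rangle$ is well defined for every $Y$ with $|Y|\leq 1$; this is immediate, with $\langle Y,X\rangle\in\mathcal M$ by comprehension and $|X|\leq|M-X|$ holding because $|M-X|=|M|\geq\omega>|X|$. For a singleton $\{a\}$, Proposition~\ref{p:pairingimage} applies with $|\{a\}|=1\leq|X|\leq|M-X|$ and yields $\langle\{a\},X\rangle\in[X]_\boxminus$. The key injectivity computation is then for distinct $a,b$: the concepts $\langle\{a\},X\rangle$ and $\langle\{b\},X\rangle$ are disjoint (by injectivity of the pairing), each of size $|X|$, so in $\mathcal M$ their symmetric difference has size $2|X|\geq|X|$. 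Hence $|X|\trianglelefteq|\langle\{a\},X\rangle\triangle\langle\{b\},X\rangle|$, so $\neg(|\langle\{a\},X\rangle\triangle\langle\{b\},X\rangle|\vartriangleleft|X|)$, and since $E(\boxminus)_X$ refines a separation this gives $\neg E(\langle\{a\},X\rangle,\langle\{b\},X\rangle)$, whence $\partial\langle\{a\},X\rangle\neq\partial\langle\{b\},X\rangle$ by $A_E$.

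It remains to separate $\emptyset$ from the singletons. Here $F_\varrho(\emptyset)=\langle\emptyset,X\rangle=\emptyset$, and I would argue exactly as in Lemma~\ref{lem:refinterp}: because refining a separation includes non-triviality, $\partial\emptyset\neq\partial W$ for every $W\in[X]_\boxminus$---otherwise $A_E$ would give $E(\emptyset,W)$, and since every permutation fixes $\emptyset$, \ref{p:perminv} would spread this to $E(\emptyset,W')$ for all $W'\in[X]_\boxminus$ and hence collapse $[X]_\boxminus$ into a single $E$-class. Taking $W=\langle\{a\},X\rangle$ gives $\partial\emptyset\neq\partial\langle\{a\},X\rangle$. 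Combined with the previous paragraph and the trivial converse ($Y=Z\Rightarrow F_\varrho(Y)=F_\varrho(Z)$, so $\partial(F_\varrho Y)=\partial(F_\varrho Z)$), this verifies that for all $Y,Z$ with $|Y|,|Z|\leq 1$ one has $\partial(F_\varrho Y)=\partial(F_\varrho Z)\leftrightarrow Y=Z$, i.e.\ that $A_E$ translates ${\tt BLV}(\leeq 1)$ via $\varrho$.

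I expect the only real friction to be the non-standard finiteness of $X$: since $X$ may be infinite in the metatheory, I cannot read off $\triangle$-facts from metatheoretic counting and must instead route everything through the relation $\vartriangleleft$ and through Proposition~\ref{p:pairingimage}. The saving observation is the elementary in-$\mathcal M$ inequality $|X|\leq 2|X|$, which already blocks $2|X|\vartriangleleft|X|$ and so lets separativity bite; no delicate arithmetic from Proposition~\ref{p:nonA} is needed beyond this.
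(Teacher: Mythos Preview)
Your proposal is correct and follows essentially the same route as the paper: both use the translating formula $\varrho(Y,Z):=Z=\langle Y,X\rangle$, invoke Proposition~\ref{p:pairingimage} to place $\langle\{a\},X\rangle$ in $[X]_\boxminus$, and argue that for distinct singletons the images are disjoint so their symmetric difference is too large for separativity to permit $E$-equivalence. Your treatment is in fact more careful than the paper's on two points: you explicitly handle the $\emptyset$ case (which the paper's short proof passes over, verifying the biconditional only ``for singletons $Y$ and $Z$''), and you phrase the size comparison in terms of $\vartriangleleft$ rather than $<$, which is the right relation given Definition~\ref{def:trivsepcompl0} and the possibility that $X$ is nonstandardly finite.
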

\begin{proof}
 By Proposition~\ref{p:pairingimage}, for any singleton $Y$, $\langle Y, X\rangle \in [X]_\boxminus$.  Further, if $Y\neq Z$ and both are singletons, then $\langle Y,X\rangle$ and $\langle Z, X\rangle$ are disjoint, so $|\langle Y,X\rangle \triangle \langle Z, X\rangle|\not < |X|$.  Thus, setting $\varepsilon Y = \partial \langle Y, X\rangle$, we have for singletons $Y$ and $Z$:
	\[ \varepsilon Y = \varepsilon Z \Leftrightarrow E(\langle Y, X\rangle, \langle Z, X\rangle ) \Leftrightarrow \langle Y, X\rangle = \langle Z, X\rangle \Leftrightarrow Y=Z \] 
	%Thus $|\rng{\partial}|=|M|$ in $\mathcal M$.  So by the argument of Remark~\ref{rmk:refinterpU}, we can choose an appropriate $\langle \cdot, \cdot \rangle$, and re-run this argument to ensure that (\ref{e:Ucond}) is satisfied.
\end{proof}

We now answer Questions~\ref{q:RC} and~\ref{q:REE}:
\begin{thm}\label{thm:solveWED}
An abstraction principle $A_E$ is relatively categorical if and only if it is cardinality coarsening on abstracts.

Further, an abstraction principle is relatively elementarily equivalent if and only if it is   relatively categorical.
\end{thm}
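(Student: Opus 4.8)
The plan is to close the cycle of implications recorded in~(\ref{d:RCarrows}). That diagram already supplies cardinality coarsening on abstracts $\Rightarrow$ natural relative categoricity $\Rightarrow$ relative categoricity $\Rightarrow$ relative elementary equivalence, so both biconditionals of the theorem reduce to the single arrow \emph{relative elementary equivalence $\Rightarrow$ cardinality coarsening on abstracts}: granting it, we get relative elementary equivalence $\Rightarrow$ cardinality coarsening $\Rightarrow$ relative categoricity $\Rightarrow$ relative elementary equivalence, so all three notions coincide. I would prove its contrapositive: if $A_E$ is \emph{not} cardinality coarsening on abstracts then $A_E$ is not relatively elementarily equivalent. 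By the~\ref{thm:NRCCC} the hypothesis yields a model $\mathcal M\models A_E[\partial]$ with concepts $X,Y$ satisfying $|X|=|Y|\leq|\rng{\partial}|$ and $\neg E(X,Y)$; a finite universe cannot witness this, since distinguishing two equinumerous concepts there forces $|\rng{\partial}|>|M|$ (as in the counting behind Section~\ref{sec:badco}), so I may take $|M|\geq\omega$.

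First I would treat the case where the bad pair lies on a single bicardinal slice, i.e.\ where $E(\boxminus)_W$ is nontrivial for some $W$ in an infinite model. By Theorem~\ref{thm:main-formal} such a slice is separative or complementative; Corollaries~\ref{cor:sep-BLV} and~\ref{cor:comp-CP} together with Theorem~\ref{thm:BC} forbid both at a \emph{balanced} slice $|W|=|M-W|$ in an infinite universe, so every nontrivial slice is a proper separation at an unbalanced slice, where I may assume $1\leq|W|\leq|M-W|$. I would then run the computation from the proof of Lemma~\ref{lem:interp1}, which applies equally when $W$ is infinite: by Proposition~\ref{p:pairingimage} the map $\{y\}\mapsto\langle\{y\},W\rangle$ carries singletons into $[W]_\boxminus$, distinct singletons have disjoint images whose symmetric difference has full size $|W|$ (hence is not $\vartriangleleft|W|$), and separativity therefore makes those images $E$-inequivalent. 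So $A_E$ translates ${\tt BLV}(\leeq 1)$, whence Lemma~\ref{lem:BLVnotRC} yields the failure of relative elementary equivalence.

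The genuine obstacle is the remaining case, where $E$ is nontrivial at \emph{no} bicardinal slice (so $\boxminus\subseteq E$) and yet is still not coarsening, being strictly finer than $\approx$; the paradigm is {\tt BP} itself ($A_\boxminus$), whose witness is a cross-slice pair of equinumerous concepts of full size with differing complement size, e.g.\ $M$ against $M$ minus a point. Here the strategy of the previous paragraph is provably unavailable: by permutation invariance (\ref{p:perminv}) the bicardinal type of any uniformly built $F_\varphi(\{y\})$ is independent of $y$, so all such images are $\boxminus$-equivalent and no $L_0$-formula can translate ${\tt BLV}(\leeq 1)$. Instead I would build the two interpretations by hand, in the spirit of Lemma~\ref{lem:BLVnotRC}. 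Passing to a countable model, the abstracts of the concepts $M-S$ with $|S|=0,1,2,\dots$ form a definable $\omega$-chain anchored at the definable point $\partial M$; I would set $\partial_2$ to be $\partial_1$ composed with a definable permutation of $M$ that displaces this chain, verify that $\mathcal M\models A_E^2[\partial_1,\partial_2]$, and exhibit a fixed-point sentence about $\partial_i M$ that holds in the induced model $\mathcal M_1$ but fails in $\mathcal M_2$. Assembling the three cases shows that failure of cardinality coarsening on abstracts entails failure of relative elementary equivalence, and with~(\ref{d:RCarrows}) this establishes both biconditionals. The main difficulty I anticipate is exactly this last construction: unlike the separation case it is not handed to us by the machinery already in place, and the bookkeeping needed to guarantee simultaneously that the displacing permutation preserves $A_E$ and that the chosen sentence genuinely separates $\mathcal M_1$ from $\mathcal M_2$ is the crux of the whole argument.
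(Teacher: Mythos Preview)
Your overall strategy---closing the cycle of implications by proving the contrapositive of ``relative elementary equivalence $\Rightarrow$ cardinality coarsening on abstracts''---matches the paper's, and your Case~1 (some bicardinal slice nontrivial, hence separative at an unbalanced slice, hence translate ${\tt BLV}(\leeq 1)$ and invoke Lemma~\ref{lem:BLVnotRC}) is essentially the paper's treatment of cases~(\ref{e:XYinfnotTop}) and~(\ref{e:XYfinMinf}).

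But your Case~2 is genuinely broken, not merely incomplete. You propose setting $\partial_2 = \sigma\circ\partial_1$ for a permutation $\sigma$ of $M$ in $\mathcal M$. Then the natural bijection $\Gamma$ is $\sigma\upharpoonright\rng{\partial_1}$, and for $X\subseteq\rng{\partial_1}$ we have $\Gamma(\partial_1 X)=\sigma(\partial_1 X)$ while $\partial_2(\Gamma X)=\sigma(\partial_1(\sigma X))$; so $\Gamma$ is an isomorphism exactly when $E(X,\sigma X)$ for all such $X$. Since $\sigma$ permutes $M$, we always have $X\boxminus\sigma X$, and your Case~2 hypothesis is precisely that $\boxminus\subseteq E$. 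Hence $\Gamma$ \emph{is} an isomorphism for every choice of $\sigma$, and no fixed-point sentence can separate $\mathcal M_1$ from $\mathcal M_2$. (This is also why the ``countable model plus $\omega$-chain'' idea cannot succeed: the obstruction is structural, not a matter of bookkeeping.) The paper's move here---its case~(\ref{e:XTop>Y})---is different in kind: it takes the witnessing pair $X\subsetneq Y$ with $|X|=|Y|=|M|=|M-X|>|M-Y|$, uses bijections to \emph{re-range} the operator so that $\rng{\partial_1}=X$ and $\rng{\partial_2}=Y$, and then observes that the $L_0[\partial]$-sentence ``every universe-sized concept has the same abstract as the universe'' holds in $\mathcal M_1$ (where every universe-sized subset of $X$ is $\boxminus$-equivalent to $X$) but fails in $\mathcal M_2$ (where $X\subseteq Y$ has full size but $\neg E(X,Y)$). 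The essential idea you are missing is that breaking categoricity when $E\supseteq\boxminus$ requires operators with \emph{different} ranges.

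A smaller point: your dismissal of Dedekind-finite universes (``forces $|\rng{\partial}|>|M|$'') is too quick. The counting in Section~\ref{sec:badco} does not give that directly; the paper handles this (case~(\ref{e:XYMfin})) by invoking Theorem~\ref{thm:BC} to reduce to $|M|\leq 4$ with ${\tt LCP}(\eeq X)$ holding, and then citing the explicit argument of~\cite[\S~5.5]{WalshED2015}.
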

\begin{proof}
The right-to-left direction of the second assertion is trivial; for the right-to-left direction of the first, suppose $A_E$ is cardinality coarsening on abstracts, and let $\mathcal M \models A_E^2[\partial_1, \partial_2]$.  Let $\Gamma$ be the natural bijection defined by $\Gamma\partial_1 X = \partial_2 X$;  note that as $|\Gamma X| = |X|$, since $A_E$ is cardinality coarsening on abstracts, in 
%\marginpar{added ``in''}
 $A_E^2[\partial_1,\partial_2]$ it follows that $\partial_i X = \partial_i \Gamma X$.  Thus   
\begin{equation}
\Gamma \partial_1 X = \partial_2 X = \partial_2 \Gamma X
\end{equation}
so $\Gamma$ is an isomorphism.\footnote{This direction of the proof generalizes the proof in~\cite[Proposition~14]{Walsh2012aa}.}

For the left-to-right direction of both assertions, observe that the following exhaust the possibilities for the cardinal placement of any two equinumerous concepts:
\begin{align}
\mathcal M \models &|M|=|Y|=|X|=|M-X| = |M-Y| 	\label{e:XYTop} \\
\mathcal M \models &|M|=|Y|=|X|=|M-X| >|M-Y| 	\label{e:XTop>Y} \\
\mathcal M \models &|M|=|Y|=|X|>|M-X|\geq |M-Y| 
\label{e:Top>X>Y}\\
\mathcal M \models & \omega\leq |X|=|Y| < |M|		\label{e:XYinfnotTop} \\
\mathcal M \models &|X|=|Y|<\omega \leq |M|		\label{e:XYfinMinf} \\
\mathcal M \models &|X|=|Y|\leq|M|<\omega			\label{e:XYMfin}
\end{align}
%\marginpar{$<$ to $\leq$ in \eqref{}e:XYMfin}}
To prove the second assertion, then it suffices to show that on each of these possibilities, if %$\mathcal M \models A_E$, then either (i) if 
%\marginpar{clarified bad sentence}
$\mathcal M\models A_E^2[\partial_1, \partial_2] $ then either (i) $\mathcal M_1$ and $ \mathcal M_2$ are isomorphic (and so elementarily equivalent), or (ii) there are $\partial_1, \partial_2$ such that $\mathcal M_1$ and $\mathcal M_2$ are not elementarily equivalent (and so not isomorphic).   From the right-to-left direction of the first assertion, we  know that if $E$ is cardinality coarsening on abstracts, then (i) holds for all possibilities. Thus to prove the second assertion, it suffices to prove that if $E$ is not cardinality coarsening on abstracts, then either $\mathcal M \not\models A_E$, or there are $\partial_1, \partial_2$ with $\mathcal M \models A_E^2[\partial_1, \partial_2]$ but $\mathcal M_1$ and $\mathcal M_2$ are not elementarily equivalent.  This will prove the first assertion as well.  

So suppose that $A_E$ is not cardinality coarsening on abstracts; let $\mathcal M \models A_E$ witness this failure.  So there are  $X, Y\in \mathcal M$ such that 
\begin{equation}
\label{e:iso-witness} \mathcal M \models |Y|=|X|\leq |\rng{\partial_1}| \wedge \neg E(X,Y)
\end{equation} Clearly then $\mathcal M\models 0<|X|$. 

The first case in which (\ref{e:XYTop}) holds can be ruled out since if $E(\boxminus)_X$ is non-trivial, then by Corollaries~\ref{cor:sep-BLV} and~\ref{cor:comp-CP}, $\mathcal M \not\models A_E$.  Thus for the second case (\ref{e:XTop>Y}) we must assume that $E(\boxminus)_X$ is trivial.  Without loss of generality assume $\mathcal M \models X\subset Y$.  Since by assumption (\ref{e:iso-witness}), $\mathcal M \models |M|=|X|=|Y|\leq |\rng{\partial}|$,
%\marginpar{added card. brackets around $\rng{\partial$}} 
let $f, g \in \mathcal M$ be respective witnesses of $|X|=|\rng{\partial}|$ and $|Y|=|\rng{\partial}|$, then set %$\partial_1 X = f(\partial X)$ and $\partial_2 X = g(\partial X)$.  
$\partial_1 = f \circ \partial$ and $\partial_2 = g \circ \partial$. 
%\marginpar{removed ambig. in def of $\partial_1, \partial_2$.}
Since $\partial_1, \partial_2$ are both definable injections in $\mathcal M$, it follows that $\mathcal M \models A_E^2[\partial_1, \partial_2]$.  Note, however, that for all $Z\subset X$, if $|Z|=|X|$ then $Z\boxminus X$.  Hence, since $E(\boxminus)_X$ is trivial, $E(X,Z)$.  Thus, as 
\[ \mathcal M_1 \models (\forall x)(Xx \leftrightarrow x=x) \] 
it follows that 
\begin{equation}
\label{e:BCnotREE}
\mathcal M_1 \models (\forall U,W)((\forall x)(Ux \leftrightarrow x=x) \wedge |W|=|U|\rightarrow \partial_1 U = \partial_1 W)
\end{equation}
However, notice that 
\[ \mathcal M_2 \models (\forall x)(Yx \leftrightarrow x=x) \] 
and that $f\circ g \in S_2[M]$ and $f\circ g\subset Y\times Y$.  Thus $f\circ g\in \mathcal M_2$, so 
\[ \mathcal M_2 \models |X|=|Y| \wedge \partial_2 X \neq \partial_2 Y \]
as by assumption $X\subset Y$ and 
%\marginpar{added ``$X\subset Y$ and''}
$\mathcal M \models A_E[\partial_2]\wedge \neg E(X,Y)$.
Consequently, we see that 
\[\mathcal M_2 \not\models (\forall U,W)((\forall x)(Ux \leftrightarrow x=x) \wedge |W|=|U|\rightarrow \partial_2 U = \partial_2 W) \]
So by (\ref{e:BCnotREE}) $A_E$ is not relatively elementarily equivalent, and so not relatively categorical.  

For (\ref{e:Top>X>Y}) observe that  by \ref{ipm} if $|M|=|X|\geq \omega$ then there is $Z\subset X$ such that $|Z|=|M-Z|=|M|$; by (\ref{e:XYTop}) $E(\boxminus)_Z$ is trivial, and so  by (\ref{e:XTop>Y}) both $X$ and $Y$ are in $[Z]_E$.
%\marginpar{Clarified this paragraph}

If (\ref{e:XYinfnotTop}) holds, then by Theorem~\ref{thm:main-formal}, $E(\boxminus)_X$ is non-trivial, but then by Lemma~\ref{lem:refinterp} %and Remark~\ref{rmk:refinterpU}, 
%\marginpar{Harmonized with new pf of Lem~\ref{lem:BLVnotRC}}
$A_E$ translates ${\tt BLV}(\leeq X)$ on $\mathcal M$.  
% so as to satisfy (\ref{e:Ucond}), and so $A_E$ translates ${\tt BLV}(\leeq 1)$ on $\mathcal M$.  
Likewise if (\ref{e:XYfinMinf}), then by Lemma~\ref{lem:interp1} we see again that $A_E$ translates ${\tt BLV}(\leq 1)$ on $\mathcal M$. 
%in a way that satisfies (\ref{e:Ucond}).  
But then by Lemma~\ref{lem:BLVnotRC} $A_E$ is not relatively elementarily equivalent, and so not relatively categorical.  

Finally if (\ref{e:XYMfin}) holds then by Theorem~\ref{thm:main-formal}, as $0<|X|<|M|$, by Theorem~\ref{thm:BC} and $\mathcal M \models A_E$, we have that $|M|\leq 4$ and $\mathcal M \models {\tt LCP}(\eeq X)$.  If $|M|=2$ then if $E(\boxminus)_X$ refines a complementation then it is trivial (and so cardinality coarsening on abstracts),
%\marginpar{added parenthetical remark} 
so $|M|=4$.  But then by the argument of~\cite[\S~5.5, p.~594]{WalshED2015}, $A_E$ is not relatively elementarily equivalent, and so not relatively categorical.
\end{proof}

Walsh and Ebels-Duggan prove another theorem~\cite[Theorem~1.2]{WalshED2015} for another weakened equivalence property:  that if all objects are abstracts, then the induced models for $A_E$ are isomorphic via the natural bijection if and only if bicardinally equivalent concepts are $E$-equivalent.  They ask an analogous question as well:  can one drop the assumption that the natural bijection is an isomorphism and still obtain the biconditional?  The proof of Theorem~\ref{thm:solveWED} indicates an affirmative answer here also:  to adapt the proof one must observe that (\ref{e:XTop>Y}) is irrelevant on the assumption that $X$ and $Y$ are bicardinally, but not $E$-, equivalent.  The other cases remain the same.  

Thus we have:
\begin{thm}\label{thm:solveWED2}
    Suppose that $\mathcal M \models A_E^2[\partial_1, \partial_2] \nd \rng{\partial_1}=\rng{\partial_2} = M$.  Then $\mathcal M_1 \cong \mathcal M_2$ if and only if $A_E$ is bicardinality coarsening on abstracts.  
\end{thm}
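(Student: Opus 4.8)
The plan is to recast both directions in terms of the triviality of bicardinal slices and then to reuse, almost verbatim, the case analysis of Theorem~\ref{thm:solveWED}. The key observation is that, since $\rng{\partial_1}=\rng{\partial_2}=M$, the assertion that $A_E$ is bicardinality coarsening on abstracts amounts to the statement that $E(\boxminus)_X$ is trivial for every concept $X$: a pair $X\boxminus Y$ with $\neg E(X,Y)$ is precisely a witness that $[X]_{E(\boxminus)}\neq[X]_\boxminus$, and conversely any nontrivial slice supplies such a pair. Thus failure of bicardinality coarsening is equivalent to the existence of some $X$ with $E(\boxminus)_X$ nontrivial, and I will exploit this equivalence in both directions.

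For the right-to-left direction, suppose $A_E$ is bicardinality coarsening on abstracts, so every bicardinal slice is trivial. Then by the theorem of Walsh and Ebels-Duggan~\cite[Theorem~1.2]{WalshED2015}, under the hypothesis $\rng{\partial_1}=\rng{\partial_2}=M$ the natural bijection $\Gamma\partial_1 X=\partial_2 X$ is already an isomorphism between $\mathcal M_1$ and $\mathcal M_2$; in particular $\mathcal M_1\cong\mathcal M_2$. This direction therefore requires no new work.

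For the left-to-right direction I argue by contraposition: assuming $A_E$ is not bicardinality coarsening, I produce full-range abstraction operators whose induced models fail to be elementarily equivalent (hence are non-isomorphic). By the reframing above, there is a concept $X$ with $E(\boxminus)_X$ nontrivial, witnessed by some $Y$ with $X\boxminus Y$ and $\neg E(X,Y)$. I then run the case analysis of the left-to-right direction of Theorem~\ref{thm:solveWED} on the cardinal placement of $X$ and $M-X$. The only change is that the witnessing pair now satisfies the extra condition $|M-X|=|M-Y|$. This renders case~(\ref{e:XTop>Y})---which demands $|M-X|=|M|>|M-Y|$---vacuous, since it is exactly the placement in which $X$ and $Y$ are equinumerous but \emph{not} bicardinally equivalent. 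Every remaining placement, namely (\ref{e:XYTop}), (\ref{e:Top>X>Y}), (\ref{e:XYinfnotTop}), (\ref{e:XYfinMinf}), and (\ref{e:XYMfin}), is treated exactly as in Theorem~\ref{thm:solveWED}: placement~(\ref{e:XYTop}) is impossible because nontriviality at a maximal bicardinality with $|X|=|M-X|>2$ contradicts $\mathcal M\models A_E$ by Theorem~\ref{thm:BC}; the infinite placements invoke Theorem~\ref{thm:main-formal} together with Lemmata~\ref{lem:refinterp} and~\ref{lem:interp1} to translate a restriction of {\tt BLV} or {\tt LCP} and then Lemma~\ref{lem:BLVnotRC} (and, for finite universes, the argument of~\cite[\S~5.5]{WalshED2015}) to defeat relative elementary equivalence. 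Because $E(\boxminus)_X$ is nontrivial from the outset, each of these appeals is available without further hypotheses.

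The step I expect to require the most care is verifying that discarding case~(\ref{e:XTop>Y}) as a top-level placement does not break case~(\ref{e:Top>X>Y}): in Theorem~\ref{thm:solveWED} the latter was handled by passing to a $Z\subset X$ with $|Z|=|M-Z|=|M|$ and reusing the \emph{construction} of case~(\ref{e:XTop>Y}) as an internal sub-argument. One must check that this internal use survives---it does, since it is applied to the auxiliary pair $(Z,X)$ rather than to the bicardinal witness $(X,Y)$---and, throughout, that the operators built in each case indeed have range all of $M$, so that the induced models are genuinely the whole of $\mathcal M_1$ and $\mathcal M_2$. Granting these checks, exactly the dichotomy of Theorem~\ref{thm:solveWED} is reproduced, and the biconditional follows.
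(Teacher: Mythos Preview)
Your proposal is correct and follows essentially the same approach as the paper: the paper's own argument is simply that one adapts the proof of Theorem~\ref{thm:solveWED}, observing that case~(\ref{e:XTop>Y}) is irrelevant under the hypothesis that $X$ and $Y$ are bicardinally equivalent, while ``the other cases remain the same.'' Your write-up is in fact more careful than the paper's, since you flag the need to check that the internal appeal to case~(\ref{e:XTop>Y}) inside case~(\ref{e:Top>X>Y}) still yields full-range operators---a point the paper passes over in silence.
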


It may seem surprising to see in Theorem~\ref{thm:solveWED} the equivalence of relative elementary equivalence with relative categoricity.  But reflection suggests that this shouldn't be so striking after all:  for Theorem~\ref{thm:solveWED} depends  on Theorem~\ref{thm:main-formal}, and so on the equivalence relations in play being permutation invariant because $L_0$-definable.  The equivalence is in a sense an artifact of this restriction.  It is an open question as to whether %loosened 
other invariance conditions (see~\cite{Fine2002}, \cite{Antonelli2010aa}), \cite{Cook2016doi})
 would yield the same results.  

However, reflection on this equivalence could be taken as philosophically relevant, and indeed good news for the neo-logicist.  For it is trivial, by \ref{p:perminv}, that in an $L_0$-structure $\mathcal M$, if $\pi$ is a permutation in $\mathcal M$, then the structure resulting from that permutation is both isomorphic and elementarily equivalent to $\mathcal M$.  This is trivial, of course, because the permuted structure just is the original structure.  But it is noteworthy that this triviality hides the coinstantiation of isomorphism and elementary equivalence.  

In light of this, one might ask:  is this coinstantiation preserved under abstraction?  In more detail:  given two abstraction operators, $\partial_1, \partial_2$ for an abstraction principle $A_E$, the natural bijection $\Gamma(\partial_1 X) = \partial_2$ will always be in any model of $A_E[\partial_1,\partial_2]$.  Therefore if $\partial_1$ and $\partial_2$ share the same range, then $\mathcal M_2$ can be regarded as the structure resulting from the permutation $\Gamma$ of the structure $\mathcal M_1$.  Is it the case that under any such permutation, the resulting models will be isomorphic if and only if elementarily equivalent?  Indeed, answers Theorem~\ref{thm:solveWED}, the answer is affirmative:  this property of coinstantiation is,  in the given sense, preserved under permuations of abstractions.  One might  argue on behalf of the neo-logicist (though we will not) that this shows abstraction is in some sense a ``logical'' operation.  

\section{Conclusion}

We began with the promise that strengthening Fine's Classification Theorem would advance our understanding of the demarcation of logical abstraction principles.  The strengthened version of Fine's theorem yields a strengthened relative categoricity theorem; the relevant advancement thus comes in the option provided by the latter result.  

Walsh and Ebels-Duggan note that relative categoricity as we have described it cannot rule out bad companions like {\tt NP}:  since {\tt NP} has only Dedekind finite models, it (the \emph{principle}) is cardinality coarsening on abstracts.  They also note, however, that the equivalence relation underlying {\tt NP} is not cardinality coarsening \emph{on small concepts}---a notion that applies not to abstraction principles but to equivalence relations.  (See \cite[section~5.4]{WalshED2015}.)

The confluence of results presented in this paper offers a stronger statement of this point.  As we can see from the results in sections~\ref{sec:badco} and~\ref{sec:relcat}, when nontrivial equivalence relations are manifest in a structure, they generate failures of relative categoricity.  It \emph{is} curious that {\tt NP} is relatively categorical as a principle.  But it is more relevant that the equivalence relation deployed by {\tt NP} is nontrivial on all infinite bicardinal slices---it has non-trivial manifestations.  And wherever nontriviality appears in a structure satisfying an associated abstraction principle, it generates a failure of relative categoricity.  Thus, even though {\tt NP} is by the letter relatively categorical, it deploys an equivalence relation that generates failures of relative categoricity.  On this line of thinking, {\tt NP} can thus be ruled out.  The same goes for other bad companions.

This is a sketchy argument for several reasons, but for our purposes it needn't be more.  As we said at the outset, our goal is to provide and clarify options.  All things considered, this sort of account may be rejected.  But the results here adduced, along with those in~\cite{WalshED2015} and~\cite{Walsh2012aa}, indicate that the sort of answer just offered is one of the things that should be considered.  

In light of this, it may be worth considering relative categoricity as kin to permutation invariance.  Both are motivated by the idea that logic is indifferent to the particulars of objects.  Permutations represent this indifference by exchanging objects arbitrarily, while relative categoricity represents it by treating of arbitrarily selected abstraction assignments.  Here, more issues loom.  Our discussion has been organized around permutation invariance, but there are stronger conditions claiming to be necessary for logicality.  Permutation invariance is \emph{intra-modular}:  permuations simply re-order or re-organize the elements of a given structure.  Recent work has focused on the \emph{trans-modular} notion of \emph{isomorphism between structures}.  This is arguably a better mark of the logical than permutation invariance, and better matches the intuitive notion of indifference to objects.  Though the language $L_0$ is logical in this stronger sense, we have not addressed how a notion like relative categoricity would apply in a trans-modular setting.\footnote{Jack Woods, in~\cite{JWoods2014}, has addressed the question of \emph{indefinite} abstraction principles in a trans-modular setting.  Given a domain $D$ and an equivalence relation $E$ on $P(D)$, let 
    \[ F(D)_E =  \{ f:P(D)\rightarrow D \mid (\forall X,Y\subset D)(fX=fY \Leftrightarrow E(X,Y))\} \]
    In this setting of \cite{JWoods2014}, the members of $F(D)_E$ are called abstraction \emph{functions} for $E$, while an abstraction \emph{operator} for $E$ is a function $\sigma$ taking domains $D$ as an argument, outputting a set $\sigma^D\subseteq F(D)_E$.  If $\zeta:D\rightarrow D'$ is a bijection, then $\zeta$ can be extended to an isomorphism $\zeta^+$ of all types over $D$ and $D'$.  As such, $\zeta^+(\sigma^D)$ will be $\{\zeta^+ \circ f\circ (\zeta^+)^{-1} \mid f\in \sigma^D\}$ (see \cite[pp.~281ff]{JWoods2014}).  The abstraction operator $\sigma$ is then \emph{isomorphism invariant} just if $\zeta^+(\sigma^D) = \sigma^{D'}$ for all $D, D'$, and bijections $\zeta:D\rightarrow D'$.  An abstraction operator is said to be \emph{full} just if for any $D$, $\sigma^D =F(D)_E$.  Woods endorses the view that indifference to particular objects is the mark of the logical, and thus that isomorphism invariance is the mark of the logical.  
    
    Woods offers as evidence for this claim the proposition on \cite[p.~298]{JWoods2014} that all and only abstraction operators logical in the sense of isomorphism invariance are both full and associated with an isomorphism invariant (in a broader sense) equivalence relation.  However, this proposition, and the lemma used to prove it (presented on \cite[p.~296]{JWoods2014}), are incorrect.  The lemma in question claims that if $\sigma$ is (non-empty and) invariant then it is full; but the proof makes two incorrect assumptions:  the first is that if $f, g \in F(D)_E$ and $\rng{f} \approx \rng{g}$ then $M-\rng{f} \approx M- \rng{g}$ \cite[p.~296]{JWoods2014}.  The second is that if $\zeta(f(A))=g(A)$ for $f, g\in F(D)_E$ and all $A\subseteq D$, then $\zeta^+(f)=g$.  But this is not in general true, for the extension of $\zeta$ to $\zeta^+$ has $\zeta^+(f) = \zeta^+ \circ f \circ (\zeta^+)^{-1}$, and while this implies that $\zeta^+(f)(A) = g((\zeta^+)^{-1}(A))$, the last $g((\zeta^+)^{-1}(A)) = g(A)$ holds only, in general, when $E$ is coarser than $\approx$;  see \cite[Theorems~1.1 and~1.2]{WalshED2015} and Theorem~\ref{thm:solveWED}.  
    
    The lemma and resulting proposition can be shown false by the example of $\approx$ for $E$ and $\sigma^D = \{f\in F(D)_E\mid \rng{f}= D\}$.  Clearly this is not full, and it can be shown isomorphism invariant using the techniques of the lemma on \cite[p.~297]{JWoods2014}.  This would suggest suitable restrictions on the cardinality or identity of the range of the abstraction functions might repair the lemma and proposition.  But more robust counterexamples can be generated by the results of the present paper.  For these we will first need the following observation.
    
        Suppose $\mathcal M[\partial_1, \partial_2]$ with first-order domain $M$ is standard and witnesses the fact that $A_E$ is not relatively categorical.  Then if $\zeta^+:\mathcal M\rightarrow \mathcal M'$ is an isomorphism, then $\mathcal M'[\zeta^(\partial_1), \zeta^(\partial_2)]$ also witnesses that $A_E$ is not relatively categorical, for $\mathcal M_i \cong \mathcal M'_i$ for $i=1,2$.  

The main idea of the counterexample is to choose a pair $\partial_1, \partial_2$ of surjective abstraction functions such that $\mathcal M_1 = \mathcal M[\partial_1] \not\cong \mathcal M_2 = \mathcal M[\partial_2]$.  Then for $M$ the domain of $\mathcal M$, we let $\sigma^M$ be the set of all abstraction functions $\partial$ such that $\mathcal M[\partial]$ \emph{is} isomorphic to $\mathcal M[\partial_1]$.  Clearly $\partial_2$ will not be among these.  Then, for every domain $D$, we set $\sigma^D$ to be the set containing $\zeta^+(\partial)$ for each $\partial \in \sigma^M$ and each   bijection $\zeta:M\rightarrow D$.  This ensures $\sigma$ will be isomorphism invariant, but also ensures, by the observation given above, that it will not be full.
    
    More formally:  it is a consequence of Theorem~\ref{thm:solveWED2} that there are abstraction principles with surjective abstraction functions $\partial_1, \partial_2$ on a domain $M$ such that for no bijection $\zeta:M\rightarrow D$ is there a bijection $\pi:D\rightarrow D$ such that 
    \[ \pi (\zeta^+ (\partial_1)(A)) = \zeta^+ (\partial_2 )(\pi(A)) \]
    (That is, the bijection $\pi$ does not commute with $\zeta^+(\partial_1)$ and $\zeta^+(\partial_2)$.)  This is true of {\tt NewV}, as demonstrated by Theorem~\ref{thm:solveWED2} in conjunction with the results of \cite[Section~5.2]{WalshED2015}.
    
    Thus for each $D\approx M$, set 
    \[ \sigma^{D} = \{\partial \in F(D')_E \mid \partial = \zeta^+ \circ \partial_1 \circ (\zeta^+)^{-1} \; \textrm{for some bijection $\zeta:M \rightarrow D$}  \}  \]
    The indefinite abstraction operator $\sigma$ is then isomorphism invariant by construction, but $\zeta^+(\partial_2) \not\in \sigma^{\zeta(D)}$ for any bijection $\zeta$, so $\sigma$ is not full.  Applied to the cited case of {\tt NewV} developed in \cite{WalshED2015}, we might choose $\partial_1$ such that the natural membership relation derived from it is well-founded.  Thus for any $D$ and any $f\in \sigma^D$, $f$ will foster such well-founded relations as well.  But for each $D$ there will always be $\partial_2 \in F(D)_E$ such that the natural membership relation is not well-founded, and thus such $\partial_2$ will be omitted from each $\sigma^D$.
    
    Though the lemma and proposition are false, the above counterexamples do not obviously determine the correctness of Woods's overall assertion:  that isomorphism invariance (for abstraction operators) is the mark of logic's characteristic indifference to the particularities of objects.  The second counterexample \emph{does} depend on the choice of an abstraction function, but \emph{not} on the choice of any first-order object.  This accords with Woods's prediction.}
%\marginpar{Greatly expanded footnote}

One last point is worth making.  The results of section~\ref{sec:badco} can be recast.  Say that \emph{$A_E$ proves the universe set-like}---in symbols, $A_E\models {\tt ZFC}(M)$---just if \ref{cc}, \ref{ism}, \ref{ipm}, and every instance of \ref{cwf} is provable in the theory $A_E$.  The results of section~\ref{sec:badco}, put into this context, say that if $A_E$ proves the universe set-like, and if $E$ is provably non-trivial at ``large enough'' bicardinalities, then $A_E$ is inconsistent.  For if $A_E \models {\tt ZFC}(M)$ and $E$ is non-trivial at a large enough
%\marginpar{Changed $V$ to $M$}
 cardinality, then since {\tt ZFC}$(M)$, at that cardinal $E$ is either separative or complementative.  Then following the theorems of that section, \ref{ipm} renders the principle inconsistent.  This can be summed up by:
\begin{cor}\label{cor:nec}
If a theory $A_E$ proves $|M|>4$ then $A_E$ is inconsistent (without the help of our cardinality principles) if and only if $A_E$ proves that there is a well-ordering of $M$ and $[X]_{E(\boxminus)}$ is non-trivial for some $X$ with $|X|=|M-X|=|M|$.
\end{cor}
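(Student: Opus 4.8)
The plan is to split the biconditional and place all the weight on the right-to-left direction; the left-to-right direction is a one-line triviality. Throughout I assume the standing hypothesis that $A_E$ proves $|M|>4$, and I read ``$A_E$ is inconsistent (without the cardinality principles)'' as inconsistency in the unaugmented second-order logic, with none of \ref{cc}, \ref{ism}, \ref{ipm}, \ref{cwf} taken as an axiom.

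For the ``only if'' direction I would simply note that an inconsistent theory proves every sentence. In particular, if $A_E$ is inconsistent then it proves both that $M$ admits a well-ordering and the (second-order expressible) assertion $(\exists X)(\exists Y)(|X|=|M-X|=|M| \wedge X\boxminus Y \wedge \neg E(X,Y))$, which says precisely that $[X]_{E(\boxminus)}$ is non-trivial for some $X$ with $|X|=|M-X|=|M|$. Nothing further is needed.

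The substantive ``if'' direction is where the apparatus of Section~\ref{sec:badco} does its work. Suppose $A_E$ proves that $M$ is well-orderable and that $[X]_{E(\boxminus)}$ is non-trivial for some $X$ with $|X|=|M-X|=|M|$. First I would observe that a well-ordering of $M$ makes the universe set-like: from it one derives \ref{cc} by comparing initial segments, \ref{ism} and \ref{ipm} by Hessenberg's cardinal arithmetic, and every instance of \ref{cwf} since cardinalities become ordinals and so are well-founded. Hence all four cardinality principles are theorems of $A_E$, and every model $\mathcal M \models A_E$ already satisfies the standing well-behavedness assumptions under which Theorem~\ref{thm:BC} was proved. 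Next I would feed the non-triviality clause into that theorem. In any such $\mathcal M$ there is an $X$ with $|X|=|M-X|=|M|$ and $E(\boxminus)_X$ non-trivial; since $|M-X|=|M|>0$ the complement $M-X$ is nonempty, so the bijection witnessing $|X|=|M|$ injects $M$ into its proper subset $X$, showing $M$ to be Dedekind infinite. Thus $|X|=|M-X|=|M|\geq\omega$, and in particular $2<|X|=|M-X|$, placing us squarely in the hypotheses of Theorem~\ref{thm:BC}. That theorem yields $\mathcal M\not\models A_E$, a contradiction. So $A_E$ has no models, and by the completeness of second-order logic for the non-standard semantics, $A_E$ is inconsistent.

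The hard part is conceptual rather than computational: one must see that the well-ordering clause is pulling double duty. It is not there to establish Dedekind-infinitude---that falls out of maximal bicardinality directly---but to supply the cardinality principles that Theorem~\ref{thm:BC} presupposes and that are otherwise unavailable in the present, deliberately weakened, deductive setting. Once this is recognized, the remaining cardinal-arithmetic check that $|X|=|M-X|=|M|$ meets the threshold $2<|X|=|M-X|$ is routine, and the meta-level passage from ``no models'' to ``inconsistent'' is exactly completeness, used just as in the proof of Proposition~\ref{p:interp-incons}.
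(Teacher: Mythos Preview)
Your proof is correct and follows essentially the same line as the paper's own justification: the trivial direction is disposed of by \emph{ex falso}, and the substantive direction is handled by observing that a provable well-ordering of $M$ delivers all four cardinality principles inside $A_E$, so that Theorem~\ref{thm:BC} applies in every model. Your additional remarks---that $|X|=|M-X|=|M|$ forces Dedekind infinitude, and the explicit appeal to completeness for the passage from ``no models'' to ``inconsistent''---are correct elaborations the paper leaves implicit.
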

The non-trivial direction follows because our cardinality principles follow from a well-ordering of the universe.  

Similarly we obtain necessary and sufficient conditions for satisfiability (possession of a standard model):
%\marginpar{added parenthetical rmk}
\begin{cor}\label{cor:satnec}
If a theory $A_E$ proves $|M|>4$ then $A_E$ is unsatisfiable if and only if $[X]_E(\boxminus)$ is non-trivial for some $X$ with $|X|=|M-X|=|M|$.
\end{cor}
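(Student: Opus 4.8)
The plan is to read the biconditional against Corollary~\ref{cor:nec} and exploit the one structural difference between consistency and satisfiability: a \emph{standard} model automatically satisfies all our cardinality assumptions. If $\mathcal M$ is standard then the metatheoretic axiom of choice well-orders $P(M)$, and from such a well-ordering one reads off \ref{cc}, \ref{ism}, \ref{ipm}, and every instance of \ref{cwf}. Thus, whereas Corollary~\ref{cor:nec} must \emph{require} that $A_E$ prove a well-ordering, here that hypothesis comes for free, which is precisely why it is absent from the statement.

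For the direction ``maximal non-triviality $\Rightarrow$ unsatisfiable,'' suppose $E(\boxminus)_X$ is non-trivial for some $X$ with $|X|=|M-X|=|M|$, and suppose towards a contradiction that some standard $\mathcal M\models A_E$. Because the witnessing condition $|X|=|M-X|=|M|$ forces $|M|$ to be Dedekind infinite, the relevant model is infinite; since $A_E$ proves $|M|>4$, there is then in $\mathcal M$ a concept with $2<|X|=|M-X|=|M|$ and non-trivial bicardinal slice. As $\mathcal M$ meets the cardinality assumptions, Theorem~\ref{thm:BC} applies and gives $\mathcal M\not\models A_E$, a contradiction; so $A_E$ has no standard model. (The possibility of a purely finite standard model of size exceeding $4$ is excluded separately: a principle proving $|M|>4$ while non-trivial at its maximal bicardinality is, as the results of Section~\ref{sec:badco} show, inconsistent outright, hence has no model of any size.)

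For the converse I would argue the contrapositive: if $E(\boxminus)_X$ is \emph{trivial} at every $X$ with $|X|=|M-X|=|M|$, then $A_E$ has a standard model. The governing criterion is that $A_E$ is satisfiable on a domain $M$ exactly when the number of $E$-classes of subsets of $M$ is at most $|M|$, for then any injection of the classes into $M$ defines an admissible $\partial$. I would therefore choose $|M|=\kappa$ to be a sufficiently closed cardinal (an inaccessible, compatible with whatever $A_E$ proves about $|M|$) and count classes via Theorem~\ref{thm:main-formal}. All concepts $X$ with $|X|=|M-X|=|M|$ form a single $\boxminus$-class, which by triviality is a single $E$-class; every remaining concept satisfies $|X|<|M|$ or $|M-X|<|M|$, so, taking complements in the co-small case, the remaining $E$-classes are no more numerous than the subsets of size $<\kappa$, of which there are $\kappa^{<\kappa}=\kappa$ for inaccessible $\kappa$. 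Hence the classes inject into $M$ and $A_E$ is satisfied there.

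The main obstacle is this converse. The delicate point is that triviality is assumed only at the \emph{maximal} bicardinality, whereas sub-maximal infinite slices may still be separative or complementative and so carry up to $2^{|X|}$ classes apiece; one must select the domain large and closed enough that these sub-maximal contributions nonetheless inject into $M$, while simultaneously respecting any cardinality constraints $A_E$ imposes on its own models (as with {\tt NewV} and singular limits, noted above). Establishing that such a $\kappa$ always exists---rather than merely that the top slice is well-behaved---is where the genuine work of the converse lies; the forward direction, by contrast, is an immediate application of Theorem~\ref{thm:BC}.
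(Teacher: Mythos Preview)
Your forward direction---non-triviality at the maximal bicardinal slice implies unsatisfiability---is exactly what the paper does: in a standard model the cardinality principles \ref{cc}, \ref{ism}, \ref{ipm}, \ref{cwf} hold automatically, so Theorem~\ref{thm:BC} applies and no standard model can exist. The paper's one-line proof says no more than this, and your parenthetical worry about finite models is unnecessary (if $|X|=|M-X|=|M|>0$ then $M$ is already infinite).

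Where you diverge from the paper is the converse. The paper treats that direction as not requiring argument. The intended reading of the right-hand side, in parallel with Corollary~\ref{cor:nec}, is that \emph{in every standard model of $A_E$} there is such an $X$; under that reading, if $A_E$ is unsatisfiable it has no standard models and the condition holds vacuously. Your contrapositive instead treats ``trivial at the top'' as a model-independent property of $E$ and therefore aims at the much stronger claim that triviality at the top \emph{forces} the existence of a standard model. That is a genuine model-construction theorem, and the obstacles you flag are real: it needs a cardinal $\kappa$ with $\kappa^{<\kappa}=\kappa$ (so at least an inaccessible, absent further information about $E$), and it must be reconciled with whatever size constraints $A_E$ itself imposes---precisely the {\tt NewV}-style phenomenon the paper warns about. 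The paper neither proves nor needs this; indeed its immediate remark that ``a weaker condition on the right-hand side may also be sufficient'' signals that the authors are not asserting a tight model-existence converse of the kind you are attempting.
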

The non-trivial direction again follows by our results, since the cardinality principles are true in all standard infinite models.  

These are not, however, the best results
%\marginpar{shortened first sentence} 
available, since a weaker condition on the right-hand side may also be sufficient for inconsistency (respectively, unsatisfiability).  We can already obtain such improvements understanding ``large enough'' to mean not ``universe-sized'', but ``exponentially large'' in the sense of Fine (see Appendix~\ref{sec:reFine}).  That is, replace ``for some $X$ with $|X|=|M-X|=|M|$'' on the right with ``for some $X$ such that $TOP(X)$'', and the corollaries still hold.  

In any case, as an explanation of the consistency and satisfiability profiles of abstraction principles it is worth considering.

Outside of the discussion on neo-logicism, we note that the classification theorems proved herein, being completely general, may have applications of interest in other research areas related to second-order logic.  Having answered some questions, it is thus apt to raise one in conclusion.    We showed in Theorem~\ref{thm:main-0formal} that for concepts finite in $\mathcal M$, the conditional arrows ``$\Rightarrow$'' reverse for both separative and complementative bicardinal slices.  Is this true for more than just finite concepts?  In other words, are there $L_0$-definable equivalence relations in which the arrow of implication does not reverse?

\section{Appendix:  The proof of Proposition~\ref{p:nonA}}\label{appendix:props}

\sloppy 
\begin{proof} 
Suppose $|X|\ntrianglelefteq|Y|$, so by Cardinal Comparability, for all $n\in \mathbb N$, $\mathcal M\models n\times |Y|<|X|$.  Thus $1\times |X| \geq |Y|$, so $|Y|\trianglelefteq |X|$, establishing (\ref{e:nonAcomp}).

For $X \cap Z = Y\cap W = \emptyset$ and $|Z|\stackrel{g}\leq|W|$, suppose $|X|\trianglelefteq|Y|$.  Choose $n$, $U$, and $f$ such that $|U|=n$ (in the metatheory), and $|X| \stackrel{f}\leq |U\times Y|$.  Then choose  $u\in U$, and then let 
\begin{equation}
h(v) = \begin{cases}
		f(v) & v \in X \\
		(u,g(v)) & v \in Z \\ 
\end{cases}
\end{equation}
The function $h$ is an injection establishing (\ref{e:nonAaddit}).

Toward establishing (\ref{e:nonAsubt}), assume $|Z|\leq|W|$ and  $|X\sqcup Z|\trianglelefteq |Y\sqcup W|$, but $\neg(|Y\sqcup W|\trianglelefteq|X\sqcup Z|)$.  For a contradiction, assume $\neg(|X|\vartriangleleft|Y|)$.  By (\ref{e:nonAcomp}), $|Y|\trianglelefteq|X|$.  But then since $|W|\leq|Z|$, by (\ref{e:nonAaddit}) we have $|Y\sqcup W|\trianglelefteq|X\sqcup Z|$, contradicting our assumption.

\fussy

For (\ref{e:nonAtrans}), choose $n, m$ such that $|X|\stackrel{f}\leq n\times |Y|$ and $|Y|\stackrel{g}\leq m\times |Z|$.  Then define $h:X \rightarrow nm(n+1)\times Z$ by 
\begin{equation}
h(x) = (nmf_1(x) + g_1(f_1(x)), g_2(f_2(x))
\end{equation}
where $f_1$ and $g_1$ output the left value of $f$ and $g$ respectively, and $f_2$ and $g_2$ output the right value of $f$ and $g$, respectively.  Verifying that $h$ is injective is routine, $h\in \mathcal M$ by comprehension.

To establish (\ref{e:nonAsplit}), suppose 
$\mathcal M\models X\stackrel{f}\leq n \times (Y\sqcup Z)$.  By (\ref{e:nonAcomp}), either $Y\trianglelefteq Z$ or $Z\trianglelefteq Y$;  assume the former, letting $g$ witness the injection in $\mathcal M$.  Then 
\begin{equation}
h(x) = \begin{cases}
		f(x) & f_2(x)\in Z \\
		(n+f_1(x), g_2(f_2(x))) & f_2(x)\in Y 
\end{cases}
\end{equation}
injects $X$ into $2n \times Z$; $h\in \mathcal M$ by comprehension.  So $X\trianglelefteq Z$;  a similar argument shows that if $Z\trianglelefteq Y$ then $X\trianglelefteq Y$.  Note that (\ref{e:nonAsplcor}) follows immediately.

For (\ref{e:nonApsplit}), suppose $\mathcal M\models |Y|\stackrel{f}\leq n\times |X|$ and $\mathcal{M}\models |Z|\stackrel{g}\leq m\times |X|$.  Then 
\begin{equation}
h(u) = \begin{cases}
		f(u) & u\in Y \\
		(n+i, x) & u \in Z, g(u) = (i,x) \\
\end{cases}
\end{equation}
injects $|Y\sqcup Z|$ into $(n+m)\times |X|$ in $\mathcal M$.  So if $|X|\vartriangleleft |Y\sqcup Z|$, $|Y\sqcup Z|\ntriangleleft |X|$, and so $|Y|\ntrianglelefteq|X|$ or $|Z|\ntrianglelefteq|X|$.  By (\ref{e:nonAcomp}) this implies that either $|X|\vartriangleleft|Y|$ or $|X|\vartriangleleft |Z|$ holds.  Note that (\ref{e:nonApslcor}) follows immediately.

To establish (\ref{e:nonAlsplit}), assume 
$|Y\sqcup Z|\stackrel{f}\trianglelefteq|X|$, then  $f\upharpoonright Y$ and $f\upharpoonright Z$ are the required injections.  If $|X|\trianglelefteq |Y|$ then by (\ref{e:nonAaddit}) and (\ref{e:nonAtrans}) $|X|\trianglelefteq |Y \sqcup Z|$;  so (\ref{e:nonAplsplit}) follows from (\ref{e:nonAlsplit}).

For (\ref{e:nonAexp}), assume $|X|\vartriangleleft|Y|$, so by the definition of $\vartriangleleft$, we have that $|Y|\ntrianglelefteq|X|$.  Thus by (\ref{e:nonAlsplit}),  $|Y\sqcup Z|\ntrianglelefteq|X|$.  Then by (\ref{e:nonAcomp}), $|X|\vartriangleleft |Y\sqcup Z|$.

For (\ref{e:nonAsubl}), assume $|X\sqcup Y|\stackrel{f}\vartriangleleft |Z|$ and $|W|\stackrel{g}=|Y|$.  Then 
%\marginpar{fixed; $|W|=|Y|$, not $Z$}
\begin{equation}
h(u) = \begin{cases}
f(u) & u\in X \\
f(g(u)) &u\in W 
\end{cases}
\end{equation}
witnesses $|X\sqcup W|\vartriangleleft |Z|$.  A similar argument establishes (\ref{e:nonAsubr}).  Then (\ref{e:nonAsublp}) follows from (\ref{e:nonAsubl}), (\ref{e:nonAsubr}) and (\ref{e:nonAcomp}), as does (\ref{e:nonAsubrp}).
\end{proof} 
%\marginpar{Removed Appendix on Lemma}

\section{Appendix:  Fine's Characterization Theorem}\label{sec:reFine}

We have noted throughout that our Main Theorem is a version of Fine's Classification Theorem;  additionally Theorem~\ref{thm:BC} bears a striking similarity to Fine's Characterization Theorem~\cite[Theorem~6, p.~144]{Fine2002}.  In this last appendix we show how to present Fine's Characterization Theorem in the deductive setting of our paper.  

Following Fine~\cite[p.~143]{Fine2002}, say that a concept is \emph{exponentially large} in $\mathcal M$ just its subconcepts outnumber the objects of $M$.  For short-hand, now say that a concept $X$ is ``Top'' just if $X$ and its complement are both exponentially large.  The \emph{basal} equivalence relation $E_0(X,Y)$ is the equivalence relation such that for any $\mathcal M$ and $X,Y\in \mathcal M$, $E(X,Y)$ holds (in the metatheory)\footnote{Fine's results are cast in terms of partitions of $P(M)$, so at this point we won't talk about $\mathcal M$ satisfying $E_0(X,Y)$, since Fine's presentation doesn't ensure that $E_0$ is expressible.  We will show how to express something like $E_0$ in $L_0$ below, see Definition~\ref{def:basal}.} just if either both $X$ and $Y$ are Top and $\mathcal M \models X\boxminus Y$, or neither $X$ nor $Y$ is Top and $\mathcal M \models X=Y$.  
\begin{FFCT}
The basal relation $E_0$ is the finest equivalence relation satisfying Permutation Invariance such that for any infinite \emph{standard} model $\mathcal M$, $M\models A_{E_0}$.  
\end{FFCT}
The most noticable differences between Fine's Characterization Theorem and Theorem~\ref{thm:BC} are that the latter concerns only infinite standard models, while the former addresses all  models with well-behaved cardinalities.
%\marginpar{cut ``standard'' added ``with \ldots''}  
Fine's proof of the Characterization theorem uses his Classification theorem.  As we have stressed, this classification theorem is a version of what we have proved as our Main Theorem.  But it is hard to see how, given that Fine's terminology doesn't neatly capture the array of possibilities.  But some reflection, with Fine's suggested aid of a Venn Diagram, shows that the Fine's Classification theorem can be restated as follows.  
\begin{FClass}[Restated]
For $X$ infinite and $\mathcal M$ standard: 

(\ref{i:fclassi}) If $|X|<|M|$ and $E(\boxminus)_X$ is does not refine a separation, then $E(\boxminus)_X$ is trivial. 

(\ref{i:fclassii}) If $|X|=|M|>|M-(X\cup Y)|$ and $E(\boxminus)_X$ refines neither a separation nor a complementation, then $E(\boxminus)_X$ is trivial. 

(\ref{i:fclassiii}) If $|X|=|M-X|$ and $E(\boxminus)_X$ refines neither a separation nor a complementation, then $E(\boxminus)_X$ is trivial.
\end{FClass}
And this yields as a corollary our Main Theorem, restricted to infinite standard models:  that $E(\boxminus)_X$ must satisfy at least one of being trivial, refining a separation, or refining a complementation.  

Our Theorem~\ref{thm:main-formal} can thus be regarded as 
%\marginpar{added missing ``as''} 
sharpening and expanding Fine's achievement in his Classification Theorem, except for one easily ironed wrinkle:  
\begin{rmk}
\label{rmk:wrinkle} In the statement of our theorem:  our main concern has been with identifying $L_0$-definability as a necessary condition on an equivalence relation's being logical, so our conditions on $E$ have been that it be $L_0$-definable.  However, inspection of the proofs of our results will show that $L_0$-definability is used only in invoking Permutation Invariance.   
Thus all of our results can be put as Fine's are:  about equivalence relations satisfying Permutation Invariance.  
\end{rmk} 

As we will show below, Fine's Characterization Theorem can also be sharpened and expanded, with the help of Theorem~\ref{thm:main-formal}.  We prove the Characterization Theorem in the deductive setting of Theorem~\ref{thm:main-formal}.

To do so, we first need to use $L_0$, rather than the metatheory, to describe concepts being exponentially large, ``Top'', and so to describe the basal equivalence relation $E_0$.  Given
%\marginpar{shortened sentence}
 a relation $R(x,y)$, we let $R[x] = \{y \mid R(x,y)\}$.  If $R\in \mathcal M$ then so is $R[x]$ by comprehension.  Using this we can express, in $L_0$, the claim that  $R$ ``injects''
 %\marginpar{shortened sentence} 
 from equivalence classes of subconcepts of a given concept $W$ to objects with the following formula (see also~\cite[p.~588]{WalshED2015}, \cite[p.~105]{Shapiro1991}): 
\begin{multline}
\label{e:R-inj-eqclass-objects}
 (\forall U\subseteq W)(\exists x)(R[x]\subseteq U \wedge E(R[x], U)) \wedge {} \\ 
				 (\forall x,y)(\neg E(R[x], R[y]) \rightarrow x\neq y)
\end{multline}
We will use the expression ``$\left| \frac{S_1[M]\upharpoonright W}{E}\right| \stackrel{R}\leq |M|$'' to abbreviate (\ref{e:R-inj-eqclass-objects}).\footnote{It is worth noting that since we are allowing non-standard models, for some $E$ there may be ``false negatives'' (where $W = M$ we omit the restriction ``${} \upharpoonright M$''):  
%\begin{thm}[False negatives]
There are models $\mathcal M$ and equivalence relations $E$ such that 
\[ \mathcal M \models \neg \left(\exists R\right)\left( \left| \frac{S_1[M]}{E}\right| \stackrel{R}\leq |M|\right) \]
even though there is, in the metatheory, an injection from the $E$-partition of $S_1[M]$ to $M$.
%\end{thm}
%\begin{proof} %[Proof Sketch:]
A sketch of the proof is as follows:  

The witnessing equivalence relation will be $\approx$.  
Let $L$ be a language expanding $L_0$ by countably many constants $c_i$ for first-order objects, and the $T$ be the theory containing as axioms all comprehension axioms in the signature, as well as all sentences of the form: 
%\marginpar{add. comprehension axioms, cut below}
\begin{align*}
c_i& \neq c_j & \textrm{for each $i\neq j$} \\
 \neg(\exists R)&(\varphi (R) \wedge \left| \frac{S_1[M]}{\approx}\right| \stackrel{R}\leq |M|) & \textrm{for each formula $\varphi(R)$ of $L$} \\
\end{align*}
%\marginpar{changed ``sentence'' to ``formula''}
Every finite subset of $T$ is consistent (and in fact satisfiable), so $T$ is consistent, and has a model whose first-order domain $M$ is infinite \emph{in the metatheory}.    The resulting model witnesses the truth of the theorem. %\end{proof}
}
We now have the material required for expressing the basal equivalence relation:
\begin{defn}
\label{def:basal}  Given $E$ an $L_0$-definable equivalence relation, abbreviate as follows:
\begin{align}
EXPL(X)  := & \neg \left| \frac{S_1[M]\upharpoonright X}{=}\right| \leq |M| \\
TOP(X)  : = &  EXPL(X) \wedge EXPL(M-X) \\
E_0(X,Y)  := &  (TOP(X) \wedge TOP(Y) \wedge X \boxminus Y) \vee {} \\
& \nonumber (\neg (TOP(X) \vee TOP(Y)) \wedge X=Y)
\end{align}
\end{defn}
The abstraction principle $A_{E_0}$ is the correlate of Fine's basal abstraction principle.  
\begin{thm} [Generalization of Fine's Characterization Theorem]
\label{thm:sharpFine1}
The abstraction principle $A_{E_0}$ is consistent, and for any expressible equivalence relation $E$ satisfying Permutation Invariance on all models, if $A_E$ is consistent then $E_0$ is finer than $E$.

(Recall that $E_0$ is finer than $E$ means that $\models (\forall X, Y)(E_0(X,Y) \rightarrow E(X,Y)$.)
\end{thm}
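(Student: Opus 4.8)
The plan is to treat the two assertions separately, leaning on the inconsistency machinery of Section~\ref{sec:badco}. For the consistency of $A_{E_0}$ I would simply exhibit a model: take the \emph{standard} model $\mathcal M$ whose first-order domain $M$ is countably infinite. Being standard and infinite, it satisfies full comprehension and the cardinality assumptions \ref{cc}, \ref{ism}, \ref{ipm}, and \ref{cwf}. In $\mathcal M$ a concept is $TOP$ exactly when it is infinite and co-infinite, and all such concepts are bicardinally equivalent; hence the $E_0$-classes are the single class of all $TOP$ concepts together with the non-$TOP$ (finite or cofinite) concepts, each of which is its own class. There are only $\aleph_0=|M|$ such classes, so there is an injection from $E_0$-classes into $M$, and since it is a function on $P(M)=S_1[M]$ it lives in $\mathcal M$; calling it $\partial$, we get $\mathcal M\models A_{E_0}[\partial]$. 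Comprehension in the expanded language $L_0[\partial]$ still holds because every definable concept remains a subset of $M$, so by completeness of the non-standard semantics $A_{E_0}$ is consistent.

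For the ``finer than'' assertion I would argue the contrapositive via a $TOP$-strengthened form of Theorem~\ref{thm:BC}. First the reduction: by Definition~\ref{def:basal}, $E_0(X,Y)$ holds only in two cases, and the second forces $X=Y$ and hence $E(X,Y)$ by reflexivity; in the first case $X\boxminus Y$ together with $TOP(X)$ already yields $TOP(Y)$, since $\boxminus$ preserves $|X|$ and $|M-X|$ and therefore $EXPL$. Thus $E_0$ is finer than $E$ precisely when $E$ is \emph{trivial at every $TOP$ bicardinal slice}, and it suffices to show that whenever $\mathcal M\models A_E$ contains an $X$ with $TOP(X)$, the slice $E(\boxminus)_X$ is trivial in $\mathcal M$.

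So suppose $\mathcal M\models A_E$ and $TOP(X)$. Using the complement symmetry of Corollary~\ref{cor:anti1} (and that $TOP$ is preserved under complementation) I may assume $\omega\leq |X|\leq |M-X|$. By Theorem~\ref{thm:main-formal}, $E(\boxminus)_X$ is trivial, separative, or complementative. If it were separative, then since $\omega\leq|X|\leq|M-X|$, Lemma~\ref{lem:refinterp} makes $A_E$ translate ${\tt BLV}(\leeq X)$ via $\varrho(Y,Z):= (Z=\langle Y,X\rangle)$, so $Y\mapsto \partial\langle Y,X\rangle$ is injective on all concepts of size $\leq|X|$, in particular on the subconcepts of $X$; this injection is definable, hence in $\mathcal M$ by comprehension, and witnesses $\neg EXPL(X)$ --- contradicting $TOP(X)$. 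If instead it were complementative, Lemma~\ref{lem:refinterp} makes $A_E$ translate ${\tt LCP}$, and running the argument of Proposition~\ref{p:comp-incons} at $X$ rather than at a universe-sized concept injects $P(X)$ modulo complementation into $M$, again contradicting $EXPL(X)$. Hence $E(\boxminus)_X$ is trivial. The (finitely many, small) $TOP$ slices with $|X|<\omega$ are dispatched directly by Theorem~\ref{thm:BC} and the finite-case analysis of Lemmas~\ref{p:BLV-fin} and~\ref{p:complsmall}. Since the consistency hypothesis furnishes a model in which some $TOP$ concept occurs, $E_0(X,Y)\to E(X,Y)$ holds there, giving that $E_0$ is finer than $E$.

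The hard part will be exactly this $TOP$-strengthening. Corollaries~\ref{cor:sep-BLV} and~\ref{cor:comp-CP} are stated for concepts with $|X|=|M-X|=|M|$, and the real work is to see that mere exponential largeness of $X$, i.e. that the subconcepts of $X$ outnumber $M$, already suffices: a separation (respectively complementation) at $[X]_\boxminus$ forces an injection of $P(X)$ (respectively $P(X)$ up to complementation) into $M$, which is impossible precisely when $EXPL(X)$. Secondary care is needed over reducing to $|X|\leq|M-X|$ by complementation, and over the non-standard ``false negatives'' for $EXPL$ flagged in the footnote to Definition~\ref{def:basal}, which is why the contradiction is drawn from the definitional clause (\ref{e:R-inj-eqclass-objects}) of $EXPL$ inside $\mathcal M$ rather than from metatheoretic cardinality.
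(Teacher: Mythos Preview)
Your approach is essentially the paper's: both reduce to showing that in any $\mathcal M\models A_E$, the slice $E(\boxminus)_X$ is trivial whenever $TOP(X)$, and both handle the separative branch identically via Lemma~\ref{lem:refinterp} and the definable injection $Y\mapsto \partial\langle Y,X\rangle$ contradicting $EXPL(X)$.

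The divergence is in the complementative branch, and here your version has a small gap. You propose to ``run Proposition~\ref{p:comp-incons} at $X$'' to inject $P(X)$ modulo complementation into $M$ and contradict $EXPL(X)$; but that is not what Proposition~\ref{p:comp-incons} does, and more importantly you have overlooked the key structural fact the paper exploits: if $E(\boxminus)_X$ is \emph{properly} complementative then by the Remark after Definition~\ref{def:trivsepcompl0} we must have $|X|=|M-X|$, and since you have arranged $|X|\geq\omega$, \ref{ism} gives $|X|=|M-X|=|M|$ --- so $X$ is already universe-sized and Theorem~\ref{thm:BC} applies directly to yield $\mathcal M\not\models A_E$. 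That is the paper's one-line route; your parallel to the separative argument is unnecessary and, as written, not quite what the cited proposition delivers.

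Your handling of the Dedekind-finite $TOP$ slices also needs repair. The phrase ``finitely many, small'' is false in the non-standard setting: there can be many Dedekind-finite cardinalities, and such $X$ can satisfy $TOP(X)$. Theorem~\ref{thm:BC} requires $|X|=|M-X|$, which does not follow from $TOP(X)$ alone, so that citation does not dispatch the separative finite case. The paper treats the finite complementative case via the $|M|\leq 4$ argument (where $E_0$ collapses to $\approx$ and $A_E$ inherits {\tt HP}'s lack of finite models); for the separative finite case with $|M|\geq\omega$ one should invoke Lemma~\ref{lem:interp1} rather than Theorem~\ref{thm:BC}.
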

Note that Theorem~\ref{thm:sharpFine1} implies Fine's Characterization Theorem.  
\begin{proof}
We give only the portion of the proof not available elsewhere.  To see that $A_{E_0}$ is consistent, see~\cite[p.~144]{Fine2002}. 
Towards establishing the second assertion, note that extensional equality of concepts is the finest Permutation Invariant equivalence relation (\cite[Theorem 2, p.~281]{Antonelli2010aa} also uses this fact).  So let $\mathcal M \models A_E$;  if $E$ is strictly finer than $E_0$, there are $W,S$ such that $\mathcal M \models TOP(W) \wedge TOP(S)$, $W\boxminus S$ and $\neg E(W,S)$.  Thus $E(\boxminus)_W$ is non-trivial, and by Theorem~\ref{thm:main-formal}, it either refines a separation or refines a complementation.  

If it refines a complementation then $|W|=|M-W|$.  If $|M|\leq 4$ then 
\[ \mathcal M\models (\forall X,Y)(E_0(X,Y)\leftrightarrow X \approx Y) \]
and so $A_{E_0}$ is {\tt HP}.  But {\tt HP} has no models of finite size, and since $E$ is finer than $E_0$, $A_E$ has no finite models either.  Thus $M$ is infinite.  But then by Theorem~\ref{thm:BC}, $\mathcal M \not \models A_E$.

If $E$ refines a separation, then by Lemma~\ref{lem:refinterp}, $A_E$ and $|W|\leq |M-W|$ translate ${\tt BLV}(\leeq W)$.  Set $R(x,y)$ to be $(\exists Y)(x = \varepsilon Y \wedge Yy)$; $R$ exists by comprehension and $R[\epsilon Y] = Y$.  It is easy to verify using ${\tt BLV}(\leeq W)$ implies 
\[  \left| \frac{S_1[M]\upharpoonright W}{=}\right| \stackrel{R}\leq |M| \] 
and this contradicts the assumption that $TOP(W)$.\footnote{Fine's Characterization results can in fact be generalized neatly to more types of invariance, as Fine does himself;  see~\cite[Corollary 7, p.~146]{Fine2002}, and \cite{Cook2016doi}. 
    %\marginpar{Added Cook ref.}
     }
  
\end{proof}

\section*{Acknowledgements}

The author wishes to thank Sean Walsh, Roy Cook, Jack Woods, and Eileen Nutting for helpful discussions, comments, and encouragement on this paper.  All mistakes, however, are my own.  The work on this paper was done much in anticipation of showing it to the author's late friend and former teacher, Aldo Antonelli.  That Aldo could not see the conclusion of his inspiration and support marks the completion of this project with great sadness.

\bibliographystyle{plain}
\bibliography{RC-bib}

\end{document}